\numberwithin{equation}{section}
\numberwithin{figure}{section}
\theoremstyle{plain}
\newtheorem{thm}{\protect\theoremname}[section]
\theoremstyle{plain}
\newtheorem{prop}[thm]{\protect\propositionname}
\theoremstyle{plain}
\newtheorem{cor}[thm]{\protect\corollaryname}
\theoremstyle{plain}
\newtheorem{lem}[thm]{\protect\lemmaname}
\theoremstyle{definition}
\newtheorem{defn}[thm]{\protect\definitionname}
\theoremstyle{remark}
\newtheorem{rem}[thm]{\protect\remarkname}
\def\vep{\varepsilon}
\def\de{\delta}
\def\be{\begin{equation}}
\def\ee{\end{equation}}
\def\rife#1{\eqref{#1}}
\def\wtil{\tilde{w}}
\def\mutil{\tilde{\mu}}
\def\intr{\int_{\R}}
\def\cP{{\mathcal P}}
\def\cB{{\mathcal B}}
\DeclareMathOperator*{\osc}{osc}
\providecommand{\corollaryname}{Corollary}
\providecommand{\definitionname}{Definition}
\providecommand{\lemmaname}{Lemma}
\providecommand{\propositionname}{Proposition}
\providecommand{\remarkname}{Remark}
\providecommand{\theoremname}{Theorem}
\title{Transporting a Dirac mass in a mean field planning problem}
\author{Pierre Cardaliaguet \thanks{Universit\'e Paris-Dauphine, Place du Mar\'echal de Lattre de Tassigny, 75016 Paris, France. \texttt{cardaliaguet@ceremade.dauphine.fr}} \and Sebastian Munoz \thanks{Department of Mathematics, University of California, Los Angeles, 90095, USA. \texttt{sebastian@math.ucla.edu}} \and Alessio Porretta \thanks{Dipartimento di Matematica, University of Rome Tor Vergata. Via della Ricerca Scientifica 1, 00133 Rome, Italy. \texttt{porretta@mat.uniroma2.it}}}
\begin{document}

\global\long\def\ue{u^{(\vep)}}%

\global\long\def\me{m^{(\vep)}}%

\global\long\def\R{\mathbb{R}}%

\global\long\def\ox{\overline{x}}%

\global\long\def\mg{v}%

\global\long\def\vg{\overline{v}}%

\global\long\def\oal{\overline{\alpha}}%
 
\maketitle

\begin{abstract} We study a mean field planning problem in which the initial density is a Dirac mass.  We show that there exists a unique solution which converges to a self-similar profile as time tends to $0$. We proceed by studying a continuous rescaling of the solution, and characterizing its behavior near the initial time through an appropriate Lyapunov functional. 
\end{abstract}

 \noindent \textbf{Keywords:} optimal transport; Dirac mass; self-similar solutions; degenerate elliptic equations; intrinsic scaling; displacement convexity; Lagrangian coordinates; free boundary; mean field games; Hamilton-Jacobi equations; continuity equation.\\
 \noindent \textbf{MSC: } 35R35, 35Q89, 35B65, 35J70.

\tableofcontents{}

\bigskip

\section{Introduction} 

In this paper, we study the mean field planning problem 
\be\label{eq.planning}
\left\{\begin{array}{l}
-u_t +\frac12|u_x|^2 = m^\theta \qquad \text{in}\; (0,T)\times \R\\ 
m_t-(mu_x)_x= 0 \qquad \text{in}\; (0,T)\times \R \\ 
m(0)=\delta_0, \; m(T)= m_T 
\end{array}\right.
\ee
where $\de_0$ is the Dirac mass centered at $x=0$, and $m_T:\R \to [0,\infty)$ is the smooth density of a probability measure, assumed to be compactly supported, and $\theta$ is a positive constant. 

System \eqref{eq.planning}  is a first-order mean field games system with a local coupling. It describes Nash equilibria in a differential game  played by infinitely many players \cite{LL1, LL2}. Here $u$ solves a Hamilton-Jacobi equation and is the value function of a representative small controller paying a cost made of a kinetic term (leading to the term $\frac12|u_x|^2$) and a congestion term $m^\theta$. The map $m$ describes the population density which evolves in time under the optimal feedback velocity $-u_x$ of the controllers. We consider here the planning problem, in which initial and terminal conditions for the density are prescribed. 

The model can also be interpreted as the system of optimality conditions of an optimal transport problem on the Wasserstein space of measures. The goal is to minimize the functional
\begin{equation}\label{OTF}
{\cB(m,v)}:= \int_0^T\!\! \!\int_{\R} \frac 12\, |v|^2 dmdt+  \int_0^T\!\! \!\int_{\R} F(m) dxdt
\quad \text{subject to } \begin{cases} m_t - (vm)_x=0 & \\ m(0)=\delta_0\,, m(T)=m_T & \end{cases}
\end{equation}
where $F(s)=\frac{s^{\theta+1}}{\theta+1}$. This problem is a variant of the dynamic formulation of the  mass transport problem \`{a} la Benamou-Brenier \cite{BB}, and has been widely investigated in the literature: see for instance \cite{Gomes2, CMS, GMST, LaSa, OPS}.  In this context, when $F=0$, the functions $u(0),   u(T)$ are the so-called Kantorovich potentials, see \cite{AGS, Sa, Vi}. The cost $F$ penalizes concentration.

Since P.-L.~Lions' lectures \cite{L-college}, it was observed that system \eqref{eq.planning} behaves as a time--space elliptic equation in the $u$ variable,   at least in the region where $m$ is positive.  This remark was fully developed and extended in a series of papers \cite{MimikosMunoz, Munoz, Munoz2, Porretta}, showing in various contexts  that the solution to  \eqref{eq.planning} is smooth whenever the positivity of $m$ can be proved to hold.  

More recently, we have investigated in \cite{CMP}   for the first time the case where $m$ has a compact support. Working in dimension~1, and assuming that the initial measure $m_0$ and the terminal measure $m_T$ are smooth with a compact support, it is proved in \cite{CMP} that \eqref{eq.planning} has a unique solution, and that this solution remains smooth in the support of $m$. In addition, $m$ is globally H\"{o}lder continuous;  its support is of class $C^{1,1}$ and convex in time--space. We also explain in \cite{CMP}  that there exists a particular  self-similar solution to \eqref{eq.planning}, reminiscent of the Barenblatt solution for the porous medium equation \cite{Bar, vazquez2007porous}: for $\theta>0$, let us set
$$
 \alpha= \frac 2{2+\theta} \in (0,1),
$$
and define \begin{equation} \label{eq:phi defi}
   \phi(r)= \left(\frac12 \alpha(1-\alpha)(R_\alpha^2 -r^2)\right)_+^{1/\theta}, 
\end{equation} 
where the constant $R_\alpha>0$ is chosen such that $\int_\R\phi=1$. We actually prove that $\phi$ is a self-similar profile for system \eqref{eq.planning}, meaning that the pair $(u(t,x), t^{-\alpha}\phi(t^{-\alpha}x))$, with $u(t,x)= -\alpha x^2/(2t) -Ct^{2\alpha-1}$ in $(-R_\alpha t^\alpha, R_\alpha t^\alpha)$  for a suitable constant $C\in \R$, is a solution to \eqref{eq.planning} for the terminal condition $m_T(x)= T^{-\alpha}\phi(T^{-\alpha}x)$. Note that indeed, in the sense of distributions, 
$$
\lim_{t\to 0^+} t^{-\alpha}\phi(t^{-\alpha}x)= \delta_0 \,.
$$
Let us note for later use that $\theta=2$ is a threshold for the behavior of $u$: if $\theta<2$, and thus $\alpha>1/2$, then $u$ is continuous up to time $t=0$, at least in the support of $m$. On the contrary, when $\theta>2$, then $u(t,0)$ diverges as $t\to 0$. \\

System \eqref{eq.planning} has seldom been studied in the literature when the initial or terminal data are singular measures. In the particular case where  $m_0= m_T= \delta_0$ and $\theta=1$, Lions and Souganidis proved in \cite{LS24}  the existence of a solution to \eqref{eq.planning} by using a vanishing viscosity method. They also give an explicit expression for this solution. Their analysis is motivated by the large deviation principle for the $1+1$ KPZ equation. 

In this paper, we investigate \eqref{eq.planning} under fairly general conditions on the terminal measure $m_T$. Transporting a Dirac mass into a smooth function is of course impossible for the classical optimal transport defined through the Wasserstein geodesics, in which case Dirac masses can only travel (through straight lines) into Dirac masses located at different points. The transport of a singular Dirac into a smooth density is therefore a new effect induced by the congestion cost in functional \rife{OTF}, and this is why it seems worthy of attention.  As we explained in \cite{CMP}, being the congestion cost of power type, the transport here occurs with finite speed of propagation,  which is why the final target is assumed to have compact support. Motivated by similar questions for the porous media equation (cf. \cite{Pierre}, and recall that porous media evolution is the gradient flow of power-type functionals in the Wasserstein space), our aim is to  understand not only the existence and the uniqueness of the solution, but also   to  what extent the behavior of such a solution at $t=0$ is dictated by the self-similar profile $\phi$.  As a result of our analysis, we show two main facts; firstly, that \eqref{eq.planning} admits a solution 
which behaves like the self-similar solution in the right time--space rescaling (i.e. in self-similarity variables), secondly that there is a unique solution satisfying such an asymptotic convergence, in a suitable sense,  to the self-similar profile. In a rough version, our main result  reads as follows.

\begin{thm} Fix $\theta>0$, and assume that  $m_T:\R \to [0,\infty)$ is supported in an interval $[a,b]$ and satisfies $m_T^\theta\in C^{1,\sigma}(a,b)$ for some $\sigma>0$ and the compatibility condition
\be\label{hypmT}
C^{-1}  \,{\rm dist}(x, \{a, b\})^{1/\theta} \leq m_T(x) \leq C \, {\rm dist}(x, \{a, b\})^{1/\theta}, \quad x\in [a,b],
\ee
for some constant $C>1$. Then there exists a unique solution to \eqref{eq.planning} such that the rescaled densities $x\mapsto t^\alpha m(t, t^\alpha x)$  converge, in a suitable sense, to the self-similar profile $\phi$ as $t\to 0^+$. 
\end{thm}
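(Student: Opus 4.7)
The plan is to obtain existence by a vanishing-initial-time approximation coupled with a self-similarity rescaling, and then to derive uniqueness from a Lasry--Lions duality identity in which the boundary term at $t=0$ is controlled precisely by the prescribed rescaled asymptotics. First, I would fix $\vep>0$, replace the Dirac initial datum by the shifted self-similar profile $m_0^\vep(x):=\vep^{-\alpha}\phi(\vep^{-\alpha}x)$, and let $(u^\vep,m^\vep)$ be the smooth solution of \eqref{eq.planning} on $(\vep,T)\times\R$ produced by the existence theorem of \cite{CMP} with initial datum $m_0^\vep$ and terminal datum $m_T$. The compatibility condition \eqref{hypmT} is precisely what makes $m_T$ asymptotically matched with a self-similar profile, and the regularity theory of \cite{CMP} yields estimates on $u^\vep,m^\vep$ that are uniform in $\vep$ on every $[\delta,T]$ with $\delta>0$.

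Next I would pass to self-similarity variables $y=t^{-\alpha}x$, $\tau=-\log t$, and set
$$\tilde m^\vep(\tau,y):=t^\alpha m^\vep(t,t^\alpha y),\qquad \tilde u^\vep(\tau,y):=t^{1-2\alpha}\Bigl(u^\vep(t,t^\alpha y)+\tfrac{\alpha}{2}\,t^{2\alpha-1}y^2\Bigr)+C,$$
so that $(0,\phi)$ becomes a stationary solution of the rescaled system and the desired conclusion reads as the long-time convergence $\tilde m^\vep(\tau,\cdot)\to\phi$ as $\tau\to\infty$. To drive this convergence I would construct a Lyapunov functional for the rescaled system, combining the relative internal energy $\int[F(\tilde m)-F(\phi)-F'(\phi)(\tilde m-\phi)]\,dy$ with the confining term $\tfrac{\alpha}{2}\int y^2\tilde m\,dy$ generated by the change of variables; its dissipation along the rescaled continuity/HJ pair should be a nonnegative quantity vanishing only when $\tilde m=\phi$, thanks to the displacement convexity of $F$ already implicit in \eqref{OTF}. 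Coupling the Lyapunov decay with the interior regularity gives compactness of $(u^\vep,m^\vep)$ up to $t=0$, and any cluster point is a solution with the stated self-similar behaviour.

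For uniqueness, suppose $(u_1,m_1)$ and $(u_2,m_2)$ both satisfy the conclusion. Setting $w:=u_1-u_2$, $\mu:=m_1-m_2$, I would test the difference of the two systems and integrate by parts to obtain the Lasry--Lions identity
$$\int_0^T\!\!\int_\R (m_1+m_2)|w_x|^2\,dx\,dt+\int_0^T\!\!\int_\R (m_1^\theta-m_2^\theta)(m_1-m_2)\,dx\,dt=\Bigl[\int_\R w\,d\mu\Bigr]_{t=0}^{t=T}.$$
The terminal bracket vanishes because $m_1(T)=m_2(T)=m_T$; the initial bracket is read in the rescaled variables and shown to vanish as $t\to 0^+$ using the prescribed convergence $t^\alpha m_i(t,t^\alpha\cdot)\to\phi$, which forces the difference $\mu$ to be dominated by the common profile while $\tilde u_i$ stays bounded. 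Strict monotonicity of $m\mapsto m^\theta$ then forces $m_1\equiv m_2$ and, via the Hamilton--Jacobi equation, $u_1\equiv u_2$.

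The principal obstacle I foresee is the construction of the Lyapunov functional and the identification of its dissipation: the natural entropy candidates control the density equation in isolation, whereas here one must exploit the coupling with the Hamilton--Jacobi equation to obtain strict monotonicity of $\mathcal L$ along the rescaled flow and convert this into a \emph{quantitative} rate of convergence to $\phi$, strong enough to support the passage to $t=0$. A closely related difficulty concerns the range $\theta>2$ (equivalently $\alpha<1/2$): there $u(t,0)$ diverges as $t\to 0$, so both the Lyapunov inequality and the initial bracket in the uniqueness identity are singular in the original variables and have to be handled in the rescaled ones, where $\tilde u$ remains finite.
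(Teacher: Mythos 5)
Your overall strategy---approximate $\delta_0$ by $\vep^{-\alpha}\phi(\vep^{-\alpha}\cdot)$, pass to self-similarity variables, drive the long-time convergence by a Lyapunov functional, and obtain uniqueness from a Lasry--Lions identity in which the boundary term at $t=0$ is killed by the prescribed asymptotics---is the same architecture the paper follows. However, there are gaps in several of the key steps.

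First, the Lyapunov functional. You propose the relative internal energy $\int [F(\tilde m)-F(\phi)-F'(\phi)(\tilde m-\phi)]$ plus a confining moment $\tfrac{\alpha}{2}\int y^2\tilde m$, i.e.\ a purely \emph{density-side} quantity, and you expect its dissipation to vanish only when $\tilde m=\phi$ by displacement convexity. This is not what happens. Because the planning problem is a coupled forward--backward system, the natural monotone quantity is the rescaled MFG \emph{Hamiltonian},
$$
\mathcal H(\tau)=\int_\R \Bigl(\tfrac12\mu|w_\eta|^2-\tfrac{\mu^{\theta+1}}{\theta+1}-\tfrac{\alpha(1-\alpha)}{2}\eta^2\mu\Bigr)\,d\eta+\text{const},
$$
which includes the kinetic term $\tfrac12\int\mu|w_\eta|^2$. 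Its derivative along the rescaled flow is exactly $-(2\alpha-1)\int\mu|w_\eta|^2$. This vanishes when $w_\eta=0$ on $\{\mu>0\}$, not when $\mu=\phi$; the identification of the limiting profile is a further step, carried out via the Euler--Lagrange equation for the limit of the rescaled Lagrangian flow $\hat\gamma$, not via strict convexity of the entropy. Without the kinetic term your functional has no clean monotonicity formula, and the step ``its dissipation... vanishing only when $\tilde m=\phi$'' would have to be invented from scratch.

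Second, and directly related: the sign of the dissipation is $\mathrm{sgn}(2\alpha-1)$, which flips at $\theta=2$, and at $\theta=2$ the Hamiltonian is \emph{conserved}, so the entire Lyapunov argument degenerates. The paper therefore treats $\theta=2$ by a completely separate argument built on the Lasry--Lions cross term $\int(\mu-\phi)w$. You do single out $\theta>2$ as the difficult range, but you do not notice that $\theta=2$ is critical and requires its own proof.

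Third, the uniqueness step for $\theta>2$. You argue that the initial bracket in the Lasry--Lions identity ``vanishes using the prescribed convergence... while $\tilde u_i$ stays bounded.'' Mere boundedness of the rescaled potentials plus weak convergence of the rescaled densities do not make $\int w\,d\mu$ go to zero with a rate compatible with the singular weight $e^{-\kappa\tau}$ that appears once the Lasry--Lions calculation is carried out in rescaled variables. The paper needs: (i) an exponential decay of $\mathcal H$ and of $\mathbf d_2(\mu(\tau),\phi)$ in $\tau$; (ii) a Poincar\'e-type inequality bounding $\int f(\mu-\phi)$ by $\bigl(\int|f_\eta|^2(\mu+\phi)\bigr)^{1/2}e^{\kappa\tau}$; (iii) a non-degeneracy estimate $\int 1/\mu^{\theta-1}\le C$ on the support of $\mu$ (which in particular uses $\theta>2$ for integrability of $1/\phi^{\theta-1}$); and (iv) the resulting ``quantitative'' class of solutions is defined by the oscillation, convergence-rate, and reciprocal-integral conditions that make the initial bracket controllable. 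None of these ingredients appear in your sketch, and without them the initial boundary term cannot be dispatched. Your plan is a reasonable skeleton, but these missing estimates are precisely where the $\theta\ge2$ part of the theorem lives.
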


The precise sense in which convergence to the self-similar solution holds will be made precise later, as well as the class of solutions where uniqueness is proved. This is in fact a delicate issue and our uniqueness result will be different, in strength, depending on whether $\theta<2$ or $\theta\geq 2$. The reason is readily explained; the uniqueness of solutions of system \rife{eq.planning} is typically based on the duality between the value function $u$ and the measure $m$, and such duality is seriously questioned as soon as $u$ is unbounded and $m$ has an  initial trace which is not better than a singular measure.

The basic notion of solution introduced for problem \eqref{eq.planning} is explained in Definition \ref{gensol}, where the convergence as $t\to 0$ to the Dirac mass is only meant in the weak topology of measures. When $\theta\in (0,2)$, this is enough to yield uniqueness, only using the global bound of  $t^{\frac{\alpha \theta}{\theta+1}}\|m(t)\|_{\theta+1}$, which yields H\"older continuity of $u$ up to $t=0$. So for  $\theta<2$ the convergence to the self-similar profile is just an additional property that we prove, which is not needed in the uniqueness argument. By contrast, when $\theta\geq 2$, $u$ turns out to be unbounded and the picture becomes more difficult; but we can still prove uniqueness in a class of solutions  that suitably converge to the self-similar profile in the continuous rescaling time--space reference frame. 
In relation to the main text, the aforementioned cases  correspond  to Theorem  \ref{thm.exists} ($\theta<2$), Theorem \ref{prop.unique2} ($\theta=2$), and Theorem \ref{prop.unique} ($\theta>2$).

In the proofs of the above theorems, we rely on two main tools. The first one is the characterization of the density function $m$ as transported by the flow of optimal curves, namely the family of characteristic  curves $\gamma(t,x)$ satisfying $\gamma_t= -u_x(t, \gamma)$ with $\gamma(0)=x$, which are the optimal trajectories for the value function. In \cite{CMP}, we proved that if $m(0)=m_0$ is a  sufficiently regular function, compactly supported, then $\gamma(t,x)$ is   globally Lipschitz   and  $m= \gamma_\sharp (m_0)$ is the push-forward of $m_0$ through this Lipschitz flow, which means that
\be\label{pushm}
m(t,\gamma(t,x))\, \gamma_x(t,x)=  m_0(x) \quad \hbox{for every $x$ in the support of $m_0$.}
\ee
Of course, when $m_0$ reduces to the Dirac mass, its support shrinks to a point and such a regular flow becomes singular near $t=0$. Our first idea is that, in a blow-up argument based on the self-similarity scale, the solution of \rife{eq.planning} could still be seen as the push-forward, via a Lipschitz flow, of the self-similar profile. In a more precise terms, if $m_{0\vep}$ is a suitable regular approximation of the Dirac mass, and $\gamma^\vep(t,x)$ the corresponding flow, in a blow-up frame $x=\vep^\alpha y$ we show that  $\gamma^\vep(t, \vep y)$ converges to some Lipschitz flow $\tilde \gamma(t,y)$ such that the relation $m^\vep(t,\gamma^\vep(t,x))\, \gamma_x^\vep(t,x)=  m_{0\vep}(x)$ (coming from  \rife{pushm})  converges to the identity  $m(t,\tilde \gamma(t,y))\, \tilde \gamma_x(t,y)=  \phi(y)$ where $\phi$ is the self-similar solution. This is a way to build a solution of \rife{eq.planning} such that, at the right zoom scale, $m(t)$ is a Lipschitz transport of the self-similar profile.

The second ingredient of our approach is to use the method of continuous rescaling to show that {\it any} solution of  \rife{eq.planning} should indeed converge to the self-similar profile in the proper time-space frame. Roughly speaking, this means to introduce variables $(\tau, \eta)$ defined as $\tau=\log t, \eta= \frac x{t^\alpha}$ and to show that $\mu(\tau, \eta)= t^\alpha m(t,t^\alpha \eta)$ converges (as $t\to 0$, say $\tau\to -\infty$) to the self-similar solution $\phi$ in the ($d_1$) Wasserstein topology.
We borrow this approach from  the paper \cite{Munoz3}, where the second author explores the long-time behavior of the MFG planning system proving that,  for  smooth initial and terminal conditions, the solution---in the continuous rescaling frame---converges to the self-similar profile as the time horizon $T$ tends to infinity. For this, the author introduces a Lyapunov function which forces the (rescaled) solution to behave as the self-similar one. We use here the same  Lyapunov function (for a different scaling, suitably adapted to the early-time regime) to prove the behavior near $t=0$ of the solution with initial condition $m_0=\delta_0$ and to infer its convergence to the self-similar profile under suitable conditions. 

We now describe more precisely the organization of the paper and  the content of the next Sections. In Section \ref{sec.exists}, we  build a solution to \eqref{eq.planning} starting from a solution $(u^\epsilon, m^\epsilon)$ to \eqref{eq.planning} with smooth initial densities $m_0^\epsilon$ and $m_T$. We first obtain estimates - uniform in $\epsilon$ - of the measure $m^\epsilon$ near $t=0$ (Subsection \ref{subsec.m}), exploiting  the representation of $m^\epsilon$ in terms of the flow of the optimal trajectories. Then we focus  on    $u^\epsilon$, obtaining  oscillation estimates with the optimal time scale (Subsection \ref{subsec.u}). This eventually leads to the existence of a solution (Proposition \ref{prop:existence}). The behavior near $t=0$ of this solution, in the scale of similarity variables,  is discussed in Section \ref{sec.localbehavior}: the main result of this section, Theorem \ref{prop.cvat0}, shows the convergence to the self-similar profile for any $\theta>0$.   
Finally, we prove the uniqueness of solutions; as mentioned before, here  the situation  is different according to whether  $\theta<2$ or $\theta\geq 2$. When $\theta<2$, the map $u$ remains bounded: the uniqueness of the bounded solution follows easily (Theorem \ref{thm.exists}). The case   $\theta \geq 2$ is singular and more difficult to analyze: indeed, the map $u$ blows up as $t\to 0$.  The limiting case $\theta=2$  is treated in Theorem \ref{prop.unique2}. When $\theta>2$,   we first  quantify the convergence rate to the self-similar profile of the solutions previously built by approximation. Then we  obtain  both existence and uniqueness in a class of solutions with a suitable behavior  at $t=0$ (Theorem  \ref{prop.unique}).

 We stress  that our analysis is truly localized  in time near the singularity; in particular, the same approach applies to the case where the final target $m_T$ is itself a Dirac mass, see also  Remark \ref{two-Dirac}.   Let us finally underline that we only discuss here the case of the planning problem. For the  more standard mean field game with a terminal payoff of the form $u(T,x)= g(m(T,x))$, under the setting of \cite[Thm. 1.1]{CMP}, the solution  enjoys some symmetry and convexity properties, making the problem much easier to analyze (see Section 4.2 in \cite{CMP}). The situation is however completely different for a terminal cost depending also on the space position: $u(T,x)= g(x)$ or $u(T,x)= g(x,m(T,x))$. Indeed, in this setting the support of $m$ could be disconnected and our method of proof  would fail as it is.

\section{Existence of a solution}\label{sec.exists}

In this section, we construct a solution to \rife{eq.planning}, for any value of $\theta>0$. We assume that $m_T$ is supported in an interval $[a,b]$, is smooth in $(a,b)$ and satisfies \eqref{hypmT}. 

As a starting point, we 
approximate the initial condition $\delta_0$ by 
$$
 m^{(\vep)}_0(x):=  \vep^{-\alpha} \phi(\vep^{-\alpha}x),
$$ 
where $\phi$ is the function given by \eqref{eq:phi defi},
and we consider the associated problem
\be\label{eq.planning_eps}
\left\{\begin{array}{l}
-u^{(\vep)}_t +\frac12|u^{(\vep)}_x|^2 = (m^{(\vep)})^\theta \qquad \text{in}\; (0,T)\times \R\\ 
m^{(\vep)}_t-(m^{(\vep)} u^{(\vep)}_x)_x= 0 \qquad \text{in}\; (0,T)\times \R \\ 
m^{(\vep)}(0)=m^{(\vep)}_0, \; m^{(\vep)}(T)= m_T \,.
\end{array}\right.
\ee
The existence of a solution  $(u^{(\vep)}, m^{(\vep)})$ of \rife{eq.planning_eps} is given by  \cite[Thm. 4.3]{CMP}. We also recall from \cite[Thm. 1.4, Thm. 1.5]{CMP} that the map $u^{(\vep)}$ is  $C^1$ (and can be constructed as globally Lipschitz even outside the support of $m$), while  $m^{(\vep)}$ is globally H\"older continuous, with $(u^{(\vep)},m^{(\vep)})$ smooth in $\{m^{(\vep)}>0\}$.

\subsection{Estimates on the flow and the density} \label{subsec.m}

We begin by establishing estimates on the flow of optimal curves, which yield estimates on the density. For this purpose,  we let $\gamma^{(\vep)}(t,x)$ denote  the Lagrangian flow associated to $(u^{(\vep)},m^{(\vep)})$. That is, $\gamma^{(\vep)}$ satisfies 
\be \label{flow defi}
\begin{cases} \gamma^{(\vep)}_t (\cdot,x)=-u^{(\vep)}_{x}(\cdot,\gamma^{(\vep)}(\cdot,x)),\\
\gamma^{(\vep)}(0,x)=x.
\end{cases}
\ee
We recall from \cite[Thm. 4.10]{CMP} that, letting $R_{\alpha}$ be the constant defined under \eqref{eq:phi defi},
\be\label{supportmep}
\{m^{(\vep)}>0\} = \{ (t,x)\in (0,T)\times \R\,:\,\gamma^{(\vep)}(t, -R_\alpha \vep^\alpha) < x < \gamma^{(\vep)}(t, R_\alpha \vep^\alpha) \}, 
\ee
\be \gamma^{(\vep)}_x(t,x)= \frac{m^{(\vep)}_0(x)}{m^{(\vep)}(t,\gamma^{(\vep)}(t,x))},\ee
and $\gamma^{(\vep)}$ solves the elliptic equation
$$
\gamma^{(\vep)}_{tt} + \frac{\theta (m_0^{(\vep)})^\theta}{(\gamma^{(\vep)}_x)^{2+\theta}} \gamma^{(\vep)}_{xx}= \frac{((m_0^{(\vep)})^\theta)_x}{(\gamma^{(\vep)}_x)^{\theta+1}}\qquad \text{in} \;  (0,T)\times (-R_\alpha \vep^\alpha,R_\alpha \vep^\alpha).
$$
It will be convenient to rescale the $x$ variable and to introduce the blown-up functions
$$
\tilde \gamma^{(\vep)}(t,y) = \gamma^{(\vep)}(t, \vep^\alpha y)\,,\qquad y\in [-R_\alpha, R_\alpha].
$$
We note that  $\tilde \gamma^{(\vep)}(0,y)= \vep^\alpha y$ in $[-R_\alpha, R_\alpha]$ and that $\tilde \gamma^{(\vep)}$ solves 
\be  \label{euler ep} \tilde \gamma^{(\vep)}_y(t,y)= \frac{\phi(y)}{m^{(\vep)}(t,\tilde \gamma^{(\vep)}(t,y))}, \ee
\be \label{gamtileq}
\tilde \gamma^{(\vep)}_{tt}  + \frac{\theta \phi^\theta}{(\tilde \gamma^{(\vep)}_y)^{2+\theta}}\tilde  \gamma^{(\vep)}_{yy} = \frac{(\phi^\theta)_y}{(\tilde \gamma^{(\vep)}_y)^{\theta+1}} \qquad \text{in} \;  (0,T)\times (-R_\alpha,R_\alpha).
\ee
We will need later the following estimate of $\tilde \gamma^{(\vep)}_y(T,\cdot)$, which is the main application of Assumption \eqref{hypmT}.  

\begin{lem}\label{lem.boundtildegammavep} 
There exists a constant $C$, independent of $\vep$, such that 
$$
C^{-1}\leq \tilde \gamma^{(\vep)}_y(T,\cdot)\leq C. 
$$
\end{lem}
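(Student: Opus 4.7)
My plan is to first observe that at time $T$, the rescaled flow $\tilde\gamma^\vep(T,\cdot)$ is in fact \emph{independent} of $\vep$: indeed, \eqref{supportmep} together with the fact that $m_T$ is supported in $[a,b]$ forces $\tilde\gamma^\vep(T,-R_\alpha) = a$ and $\tilde\gamma^\vep(T,R_\alpha) = b$, and the mass-conservation identity \eqref{euler ep} rewritten as $m_T(\tilde\gamma^\vep(T,z))\, d\tilde\gamma^\vep(T,z) = \phi(z)\, dz$ yields, upon integration,
$$
\int_a^{\tilde\gamma^\vep(T,y)} m_T(\xi)\, d\xi \;=\; \int_{-R_\alpha}^y \phi(z)\, dz \qquad \forall\, y\in [-R_\alpha,R_\alpha].
$$
This uniquely determines $\tilde\gamma^\vep(T,\cdot)$ as the monotone rearrangement of $\phi$ onto $m_T$, so I may drop the $\vep$ superscript and estimate a single $y$-dependent function $\tilde\gamma(T,\cdot)$.

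Next, to bound $\tilde\gamma_y(T,y) = \phi(y)/m_T(\tilde\gamma(T,y))$ above and below, I would split into two regimes. On any compact subset of the open interval $(-R_\alpha, R_\alpha)$, $\phi$ is bounded away from $0$ and $\infty$, and the integral identity confines $\tilde\gamma(T,y)$ to a compact subset of $(a,b)$, where $m_T$ is likewise bounded above and below by \eqref{hypmT}. Hence the ratio is controlled in the interior.

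The substantive part is the analysis near the endpoints. Take $y$ close to $-R_\alpha$ and set $X = \tilde\gamma(T,y)$, which must then be close to $a$. The explicit form of $\phi$ gives $\phi(z) \asymp (R_\alpha + z)^{1/\theta}$, and \eqref{hypmT} gives $m_T(\xi) \asymp (\xi - a)^{1/\theta}$. Integrating both asymptotics from their respective endpoints and comparing via the identity above forces
$$
(X - a)^{1+1/\theta} \;\asymp\; (R_\alpha + y)^{1+1/\theta},
$$
so $X - a \asymp R_\alpha + y$. Plugging back into the ratio yields $\phi(y)/m_T(X) \asymp 1$ uniformly near the left endpoint; a symmetric argument handles $y$ near $R_\alpha$.

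The key (and essentially only) obstacle is the matching of the boundary exponents $1/\theta$ appearing in $\phi$ and in $m_T$; this matching is exactly what the compatibility assumption \eqref{hypmT} is designed to guarantee, and without it $\tilde\gamma_y(T,\cdot)$ would degenerate or blow up at the endpoints.
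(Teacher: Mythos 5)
Your proposal is correct and takes essentially the same route as the paper: mass conservation combined with the compatibility condition \eqref{hypmT} to match the $1/\theta$ power laws of $\phi$ and $m_T$ at the respective endpoints, yielding $R_\alpha \mp y \asymp \mathrm{dist}(\tilde\gamma(T,y),\{a,b\})$ and hence a bounded ratio. Your upfront observation that $\tilde\gamma^\vep(T,\cdot)$ is $\vep$-independent (being the monotone rearrangement of $\phi$ onto $m_T$) is a clean way of packaging what the paper uses implicitly through the mass-conservation identity.
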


\begin{proof} 
By conservation of mass, we have, for any $y\in[-  R_\alpha, R_\alpha]$,  
$$
\int_{\tilde \gamma^{(\vep)}(T,y)}^{\tilde \gamma^{(\vep)}(T,R_\alpha )} m_T(x)dx = \int_{\vep^\alpha y}^{\vep^{\alpha}R_\alpha } m_0^{(\vep)}(x)dx. 
$$
Recalling our assumption on $m_T$ in \eqref{hypmT}, the definition of $\phi$ and the fact that $\tilde \gamma^{(\vep)}(T,R_\alpha)=b$, we get, assuming that  $\tilde \gamma^{(\vep)}(T, y)\geq (a+b)/2$ to fix ideas, 
\be\label{ineqaeilkzjrndfg}
(1+1/\theta)^{-1}C (b-\tilde \gamma^{(\vep)}(T,y))^{1+ 1/\theta} \geq  \int_y^{R_\alpha}\phi(s)ds \geq C^{-1} (R_\alpha -y)^{1+1/\theta}.
\ee
Thus, using again assumption   \eqref{hypmT}, 
$$
\vep^{-\alpha} \tilde \gamma^{(\vep)}_y(T,y) = \gamma^{(\vep)}_x(T,\vep^\alpha y) = \frac{m_0^{(\vep)}(\vep^\alpha y)}{m_T(\tilde \gamma^{(\vep)}(T,y))} 
\leq  \frac{\vep^{-\alpha} \phi(y)}{C^{-1} (b-\tilde \gamma^{(\vep)}(T,y))^{1/\theta}} \leq C \vep^{-\alpha}.
$$
This gives an upper bound on $ \tilde \gamma^{(\vep)}_y(T,\cdot)$. 

We can show in the same way that, for $y\in [0, R_\alpha]$ such that $\tilde \gamma^{(\vep)}(T, y)\geq (a+b)/2$, 
$$
(1+1/\theta)^{-1}C^{-1} (b-\tilde \gamma^{(\vep)}(T,y))^{1+1/\theta} \leq  \int_y^{R_\alpha}\phi(s)ds \leq C (R_\alpha -y)^{1+1/\theta}.
$$
Hence, using \eqref{hypmT} once more, we get
$$
\vep^{-\alpha} \tilde \gamma^{(\vep)}_y(T,y) = \frac{m_0^{(\vep)}(\vep^\alpha y)}{m_T(\tilde \gamma^{(\vep)}(T,y))} 
\geq  \frac{\vep^{-\alpha}C^{-1} (R_\alpha-y)^{1/\theta}}{C (b-\tilde \gamma^{(\vep)}(T,y))^{1/\theta}} \geq C \vep^{-\alpha}, 
$$
which gives the lower bound.  
\end{proof}

We now establish sharp upper bounds on $\tilde \gamma^{(\vep)}_y$.

\begin{lem}\label{sharp_upper} There exists a constant $C>0$, independent of $\vep$, such that
$$
\tilde \gamma^{(\vep)}_y(t,y) \leq C(t+\vep)^\alpha \qquad \forall (t,y)\in [0,T]\times [-R_\alpha, R_\alpha]. 
$$
\end{lem}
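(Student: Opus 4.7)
The plan is to apply the maximum principle to the ratio
$$\Phi(t,y) := \frac{\tilde\gamma^\vep_y(t,y)}{(t+\vep)^\alpha},$$
exploiting the fact that the Barenblatt scaling $(t+\vep)^\alpha y$ is itself an exact solution of \eqref{gamtileq} (one can check this directly from $\alpha-2 = -\alpha(\theta+1)$ together with $(\phi^\theta)_y = -\alpha(1-\alpha)y$, $(\phi^\theta)_{yy} = -\alpha(1-\alpha)$, which follow from the explicit form of $\phi$). The target bound is exactly $\Phi \leq C$.

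First I would check boundary values in $t$. Since $\tilde\gamma^\vep(0,y) = \vep^\alpha y$, one has $\Phi \equiv 1$ on $\{t=0\}$; and by Lemma \ref{lem.boundtildegammavep}, $\Phi(T,\cdot) \leq C T^{-\alpha}$, uniformly in $\vep$.

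Next, suppose for contradiction that $\Phi$ attains a maximum at some interior point $(t_0, y_0) \in (0,T)\times(-R_\alpha,R_\alpha)$ with $\Phi(t_0,y_0) > \max\{1, CT^{-\alpha}\}$. Writing $v := \tilde\gamma^\vep_y$, the first-order conditions $\Phi_t = \Phi_y = 0$ force
$$v_y(t_0,y_0) = \tilde\gamma^\vep_{yy}(t_0,y_0) = 0, \qquad v_t(t_0,y_0) = \frac{\alpha\,v(t_0,y_0)}{t_0+\vep},$$
and the second-order conditions $\Phi_{tt}(t_0,y_0) \leq 0$, $\Phi_{yy}(t_0,y_0) \leq 0$ become
$$v_{tt}(t_0,y_0) \leq -\frac{\alpha(1-\alpha)\,v(t_0,y_0)}{(t_0+\vep)^2}, \qquad v_{yy}(t_0,y_0) \leq 0.$$
Now I differentiate the elliptic equation \eqref{gamtileq} with respect to $y$ and evaluate at $(t_0,y_0)$. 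Since $v_y = \tilde\gamma^\vep_{yy} = 0$ there, the many terms containing $v_y$ or $\tilde\gamma^\vep_{yy}$ drop out, and using $(\phi^\theta)_{yy} = -\alpha(1-\alpha)$ the identity reduces to
$$v_{tt} + \frac{\theta\,\phi^\theta}{v^{\theta+2}}\,v_{yy} = -\frac{\alpha(1-\alpha)}{v^{\theta+1}}.$$
The second term on the left is nonpositive (as $\phi \geq 0$ and $v_{yy}\leq 0$); plugging in the upper bound on $v_{tt}$ and using $\alpha(1-\alpha) > 0$ yields
$$\frac{1}{v^{\theta+1}} \geq \frac{v}{(t_0+\vep)^2}, \quad \text{i.e.,} \quad v(t_0,y_0) \leq (t_0+\vep)^{2/(\theta+2)} = (t_0+\vep)^\alpha,$$
so $\Phi(t_0,y_0)\leq 1$, a contradiction.

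The hard part will be the lateral boundary $\{y = \pm R_\alpha\}$, where the PDE degenerates ($\phi$ vanishes) and the identity $\tilde\gamma^\vep_y = \phi/m^\vep$ becomes indeterminate. I would handle this by applying the argument on the shrunk strip $y \in [-R_\alpha + \delta, R_\alpha - \delta]$ and bounding $\Phi$ at $y = \pm(R_\alpha - \delta)$ uniformly in $\vep$: by the free-boundary regularity of $(u^\vep, m^\vep)$ established in \cite{CMP}, $m^\vep(t, \tilde\gamma^\vep(t,y))$ vanishes at the same rate $(R_\alpha - |y|)^{1/\theta}$ as $\phi(y)$ as $y$ approaches $\pm R_\alpha$, so $\tilde\gamma^\vep_y = \phi/m^\vep$ stays bounded with the correct scaling $(t+\vep)^\alpha$ uniformly in $\vep$. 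Passing to $\delta \to 0^+$ and combining with Steps 1--2 then gives $\Phi \leq C$ on the full domain, which is the claim.
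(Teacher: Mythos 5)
Your reduction at an interior maximum is correct: writing $v=\tilde\gamma^\vep_y$ and $\Phi=v/(t+\vep)^\alpha$, the first- and second-order conditions at a critical point combine with the differentiated equation to give $v^{\theta+2}\le (t_0+\vep)^2$, i.e.\ $\Phi\le 1$ there. This is morally the same comparison with the Barenblatt rescaling that the paper runs. The boundary in $t$ is also fine: $\Phi\equiv 1$ at $t=0$ and $\Phi(T,\cdot)\le C$ by Lemma~\ref{lem.boundtildegammavep}.

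The gap is exactly where you flagged it: the lateral boundary, and your proposed fix is circular. You want to bound $\tilde\gamma^\vep_y=\phi/m^\vep(\cdot,\tilde\gamma^\vep)$ near $y=\pm R_\alpha$ from the free-boundary regularity of \cite{CMP}, by arguing that $m^\vep(t,\tilde\gamma^\vep(t,y))$ vanishes like $(R_\alpha-|y|)^{1/\theta}$ uniformly in $\vep$. But the rate of vanishing of $m^\vep$ quoted in \cite{CMP} is in the \emph{physical} variable $x$, and converting it to the Lagrangian variable $y$ introduces a factor that is a power of $\tilde\gamma^\vep_y$ itself: if $m^\vep(t,x)\asymp C^\vep(t)(x_R(t)-x)^{1/\theta}$ then $m^\vep(t,\tilde\gamma^\vep(t,y))\asymp C^\vep(t)\bigl(\tilde\gamma^\vep_y\bigr)^{1/\theta}(R_\alpha-y)^{1/\theta}$, so the estimate you need for $\tilde\gamma^\vep_y$ on $\{|y|=R_\alpha-\delta\}$ is equivalent to a uniform-in-$\vep$ quantitative non-degeneracy of the free boundary, which is precisely what the lemma is supposed to establish. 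There is no independent input that lets you close this.

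The paper gets around this without any lateral boundary estimate by constructing a barrier that blows up as $y\to\pm R_\alpha$. It takes $w(t,y)=k(t+\vep+\delta\xi(y))^\alpha$ with $\xi$ smooth, convex, even, positive, and $\xi'(y)=\phi(y)^{-(2\theta+1)}$ near $|y|=R_\alpha$, so that $\xi(y)\to+\infty$ and hence $w\to+\infty$ at the lateral boundary; this makes the inequality $v<w$ automatic there for each fixed $\vep$, with no free-boundary input. The specific choice $\xi'=\phi^{-(2\theta+1)}$ is engineered so that the extra terms $(2\theta+1)(\phi^\theta)_y\xi'+\theta\phi^\theta\xi''$ vanish identically near the boundary, and are then absorbed into the $k\alpha(\alpha-1)$ term for $k$ large, so $w$ remains a strict supersolution up to $\pm R_\alpha$. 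The comparison is then carried out via $k^*=\inf\{\lambda>1: v\le\lambda w\}$; any touching point is forced into the interior, where the contradiction you computed applies, and letting $\delta\to 0$ yields the bound. If you add this barrier step, your proof becomes essentially the paper's; without it, the lateral boundary is not handled.
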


\begin{proof} Differentiating \eqref{gamtileq}, we see that $v= \tilde \gamma^{(\vep)}_y$ solves 
\begin{align*}
v_{tt}  + (2\theta+1)\frac{ (\phi^\theta)_y}{v^{2+\theta}}  v_{y}  + \frac{\theta \phi^\theta}{v^{2+\theta}}  v_{yy} -(2+\theta)\frac{\theta \phi^\theta}{v^{3+\theta}}(v_y)^2 - \frac{(\phi^\theta)_{yy}}{v^{\theta+1}} =0.
\end{align*}
We consider the map $w=w(t,y)$ defined by 
$$
w(t,y)= k(t+\vep+\delta  \xi(y))^\alpha, 
$$
where  $ \delta$ is any (sufficiently small) positive number, whereas $k>0$ and $\xi:(-R_\alpha, R_\alpha)\to (0,\infty)$ are to be chosen below. We have 
$$
w_t= k\alpha (t+\vep+\delta \xi)^{\alpha-1}, \qquad w_{tt}= k\alpha (\alpha-1) (t+\vep+\delta \xi)^{\alpha-2}
$$
while 
$$
w_y= k\alpha\delta (t+\vep+\delta \xi)^{\alpha-1}\xi', \qquad w_{yy}= k\alpha\delta (t+\vep+\delta \xi)^{\alpha-2}\left( (\alpha-1)\delta (\xi')^2+ (t+\vep+\delta \xi)\xi''\right).
$$
Then  
\begin{align}\label{qksldfvkk}
& w_{tt}  + (2\theta+1)\frac{ (\phi^\theta)_y}{w^{2+\theta}}  w_{y}  + \frac{\theta \phi^\theta}{w^{2+\theta}}  w_{yy} - \frac{(\phi^\theta)_{yy}}{w^{\theta+1}} \\
&\leq  (t+\vep+\delta \xi)^{\alpha-2} \left( k\alpha (\alpha-1) -(\phi^\theta)_{yy}k^{-1-\theta} \right)
+ k^{-1-\theta} \alpha\delta (t+\vep+\delta \xi)^{\alpha-3} \left( (2\theta+1) (\phi^\theta)_y \xi' + \theta \phi^\theta \xi''\right)\notag
\end{align}
where we used the fact that $\alpha(\theta+1) = -(\alpha-2)$ to put together the terms involving $w_{tt}$ and $ \frac{(\phi^\theta)_{yy}}{w^{\theta+1}} $ and where we have omitted the first term involving $w_{yy}$ which is non-positive. We choose $\xi$ as a smooth, convex, positive, and even map, with a derivative given by $\xi'=\phi^{-(2\theta+1)}$ on $[R_\alpha/2,R_\alpha)$ (which is possible since $\phi^{-(2\theta+1)}$ is increasing on $[0,R_\alpha)$). Note  that $\xi(y)\to\infty$ as $y\to R_\alpha^-$. With this choice, the last term in the right-hand side of \eqref{qksldfvkk} vanishes on $[R_\alpha/2,R_\alpha)$. Thus, there exists a constant $C$ (independent of $\vep$, $k$ and $\delta$), such that 
$$
\left| k^{-1-\theta} \alpha\delta (t+\vep+\delta \xi)^{\alpha-3} \left( (2\theta+1) (\phi^\theta)_y \xi' + \theta \phi^\theta \xi''\right)\right| \leq 
C k^{-1-\theta} (t+\vep+\delta \xi)^{\alpha-2}.
$$ 
So we can choose $k>0$ large enough such that the right-hand side of \eqref{qksldfvkk} is negative for any $\vep,\delta\in (0,1)$. Namely, we have
\be\label{ineqwk}
\begin{split}
& w_{tt}  + (2\theta+1)\frac{ (\phi^\theta)_y}{w^{2+\theta}}  w_{y}  + \frac{\theta \phi^\theta}{w^{2+\theta}}  w_{yy} - \frac{(\phi^\theta)_{yy}}{w^{\theta+1}} \\
&\leq  (t+\vep+\delta \xi)^{\alpha-2} \left( k\alpha (\alpha-1) -(\phi^\theta)_{yy}k^{-1-\theta} \right)
+ C k^{-1-\theta} (t+\vep+\delta \xi)^{\alpha-2} <0\,.
\end{split}
\ee 
In addition, if we choose $k$ large enough (again independently of $\vep,\delta$), we also have  $v(0,y)= \vep^\alpha < w(0,y)$ and $v(T,y)=\tilde \gamma^{(\vep)}_y(T,y)< w(T,y)$   (because $\tilde \gamma^{(\vep)}_y(T,y)$ is bounded by a constant independent of $\vep$, by Lemma \ref{lem.boundtildegammavep}). 

Given this choice of $k$, we now claim that $v\leq w$. Assume the contrary, and define 
$$
k^*:= \inf\{\lambda >1 \,:\, v\leq \lambda\, w \quad \hbox{in $(0,T)\times (-R_\alpha, R_\alpha)$} \}\,.
$$
We note that the set above is non-empty, since $v$ is bounded, so $k^*<\infty$. If it happens that $k^*>1$,  this implies the existence of some contact point in the interior, since $v<w$ at the parabolic boundary; indeed,  by optimality of $k^*$ there should exist  some $(t_0,y_0)\in (0,T)\times (-R_\alpha, R_\alpha)$ such that $v\leq k^*w$ and $\max(v- k^*w)=0$ is  attained at $(t_0, y_0)$. Therefore, at $(t_0,y_0)$, we have 
$$
(v_t,v_y)= k^*(w_t,w_y), \qquad v_{tt}\leq k^*\, w_{tt}, \qquad v_{yy}\leq k^*\, w_{yy}.
$$
Hence, at $(t_0,y_0)$, 
\begin{align*}
0 &= v_{tt}  + (2\theta+1)\frac{ (\phi^\theta)_y}{v^{2+\theta}}  v_{y}  + \frac{\theta \phi^\theta}{v^{2+\theta}}  v_{yy} -(2+\theta)\frac{\theta \phi^\theta}{v^{3+\theta}}(v_y)^2 - \frac{(\phi^\theta)_{yy}}{v^{\theta+1}}\\ 
&\leq k^*w_{tt}  + (2\theta+1)\frac{ (\phi^\theta)_y}{v^{2+\theta}}  k^* w_{y}  + \frac{\theta \phi^\theta}{v^{2+\theta}}  k^* w_{yy}  - \frac{(\phi^\theta)_{yy}}{v^{\theta+1}}\\ 
& = k^*  w_{tt}  + (2\theta+1)\frac{ (\phi^\theta)_y}{(k^*w)^{2+\theta}}  k^*w_{y}  + \frac{\theta \phi^\theta}{(k^*w)^{2+\theta}}  k^*w_{yy}  - \frac{(\phi^\theta)_{yy}}{(k^*w)^{\theta+1}} \; <\; 0, 
\end{align*}
where the last equality  comes from the fact that $v= k^*w$ at $(t_0,y_0)$. However, exactly as in \rife{qksldfvkk}--\rife{ineqwk}, for any $k^*>1$ we have
\begin{align*}\label{qksldfvkk}
& (k^*w   )_{tt}  + (2\theta+1)\frac{ (\phi^\theta)_y}{(k^*w  )^{2+\theta}}  (k^*w   )_{y}  + \frac{\theta \phi^\theta}{(k^*w )^{2+\theta}}  (k^*w )_{yy} - \frac{(\phi^\theta)_{yy}}{(k^*w)^{\theta+1}} \\
&\leq  (t+\vep+\delta \xi)^{\alpha-2} \left( k^* k \alpha (\alpha-1) -(\phi^\theta)_{yy}(k^* k)^{-1-\theta} \right)
+ C (k^*k)^{-1-\theta}  (t+\vep+\delta \xi)^{\alpha-2}
<0
\end{align*}
whenever $k^*>1$.
Hence there is a contradiction, which proves that  $k^*=1$, namely $v\leq w$. This means  $v \leq k (t+\vep+ \de \xi)^\alpha$, and then letting $\de\to 0$, we get
$$
\tilde \gamma^{(\vep)}_y (t,y) \leq k (t+\vep)^\alpha\,.
$$
\end{proof}

We deduce next an important estimate on the support of $m^{(\vep)}$.

\begin{cor}\label{sharp_support} There exists $C>0$, independent of $\vep$, such that
\be \label{sharp_support estim}
\left| \tilde \gamma^{(\vep)}(t, y) \right| \leq C(\vep+t)^\alpha \qquad \forall (t,y)\in [0,T]\times [-R_\alpha, R_\alpha].
\ee
\end{cor}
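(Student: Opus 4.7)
The starting point is the oscillation estimate that follows from Lemma \ref{sharp_upper}: integrating $\tilde\gamma^\vep_y(t,y)\le C(t+\vep)^\alpha$ over $y\in[-R_\alpha,R_\alpha]$ gives
\[
\tilde\gamma^\vep(t,R_\alpha)-\tilde\gamma^\vep(t,-R_\alpha) \;\le\; 2CR_\alpha(t+\vep)^\alpha.
\]
Since $\tilde\gamma^\vep(t,\cdot)$ is non-decreasing, it is therefore enough to bound a single endpoint of the support, say $\tilde\gamma^\vep(t,R_\alpha)$, in absolute value by $C'(t+\vep)^\alpha$; the stated estimate will then hold uniformly in $y$.

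To do so, I will construct a super-solution of \eqref{gamtileq} modelled on the barrier of Lemma \ref{sharp_upper}, now applied to $\tilde\gamma^\vep$ itself. A natural ansatz is
\[
W(t,y) \;=\; k(t+\vep)^\alpha(y+C) \;+\; \delta\,\eta(y),
\]
where $k, C$ are large constants (with $C>R_\alpha$), $\delta>0$ is small, and $\eta$ is smooth, positive, convex, and blows up as $y\to\pm R_\alpha$. Using the identity $\alpha(\theta+1)=2-\alpha$ (which makes the two ``main'' terms share the same power $(t+\vep)^{\alpha-2}$) and the explicit form $(\phi^\theta)_y=-\alpha(1-\alpha)y$, a direct computation gives
\[
L[W] \;=\; (t+\vep)^{\alpha-2}\,\alpha(1-\alpha)\bigl[-k(y+C)+k^{-\theta-1}y\bigr] \;+\; (\text{$\delta$-terms involving }\eta',\eta''),
\]
where $L$ denotes the elliptic operator in \eqref{gamtileq}. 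The bracket is strictly negative on $[-R_\alpha,R_\alpha]$ as soon as $k^{\theta+2}C$ is large enough; and, choosing $\eta$ in the same spirit as the function $\xi$ of Lemma \ref{sharp_upper} (in particular with $\eta'=\phi^{-(2\theta+1)}$ on $[R_\alpha/2,R_\alpha)$ and symmetrically on the left), one keeps the $\delta$-terms under control so that $L[W]<0$ throughout $(0,T)\times(-R_\alpha,R_\alpha)$.

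The remaining boundary comparisons are straightforward: $W\ge\tilde\gamma^\vep$ at $t=0$ follows from $\tilde\gamma^\vep(0,y)=\vep^\alpha y$; at $t=T$ it follows from $\tilde\gamma^\vep(T,\cdot)\in[a,b]$ together with Lemma \ref{lem.boundtildegammavep} (provided $k$ is chosen large enough, depending on $T,a,b,C$ but independently of $\vep$); and at the spatial boundary $y=\pm R_\alpha$ the inequality is automatic since $\eta\to+\infty$. A contact-point argument identical to the one in Lemma \ref{sharp_upper}---introducing $k^*:=\inf\{\lambda\ge 1:\tilde\gamma^\vep\le\lambda W\}$, observing that the same computation gives $L[\lambda W]<0$ for every $\lambda\ge 1$, and deriving a contradiction at an interior contact point---forces $k^*=1$, hence $\tilde\gamma^\vep\le W$; sending $\delta\to 0$ yields $\tilde\gamma^\vep(t,y)\le k(R_\alpha+C)(t+\vep)^\alpha$. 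A symmetric sub-solution argument produces the matching lower bound. The main technical obstacle is, as in Lemma \ref{sharp_upper}, the construction of $\eta$: its blow-up must be finely tuned to the degeneracy of $\phi$ so that the strict sign of $L[W]$ is preserved up to $y=\pm R_\alpha$.
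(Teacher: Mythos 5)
Your starting point (integrate the bound of Lemma \ref{sharp_upper} in $y$ to control the width of the support, then bound a single endpoint) matches the paper, but from there you and the paper diverge. The paper's proof is essentially one line: it cites \cite[Thm.~4.14]{CMP}, which already states that the left free boundary $\tilde\gamma^\vep(\cdot,-R_\alpha)$ is convex and the right one concave; since $\tilde\gamma^\vep(0,-R_\alpha)=-R_\alpha\vep^\alpha$ and $\tilde\gamma^\vep(T,-R_\alpha)=a$, convexity bounds $\tilde\gamma^\vep(t,-R_\alpha)$ from above by the secant line through these two values (which is $O(\vep^\alpha+t)\leq C(\vep+t)^\alpha$), and adding the width bound finishes the estimate. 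Your attempt instead builds a barrier for $\tilde\gamma^\vep$ itself in the quasilinear equation \eqref{gamtileq}, which is a genuinely different and much heavier route.

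There are two genuine gaps in your construction. First, with $\eta$ blowing up at both ends and $\eta'=\phi^{-(2\theta+1)}$ near $R_\alpha$ (and $-\phi^{-(2\theta+1)}$ near $-R_\alpha$), you have $W_y=k(t+\vep)^\alpha+\delta\eta'\to-\infty$ as $y\to -R_\alpha^+$, so the coefficients $W_y^{-(2+\theta)}$ and $W_y^{-(\theta+1)}$ of $L$ are singular and change sign in the very region where the barrier is supposed to be active; you would have to argue (e.g.\ that any contact point of $\tilde\gamma^\vep-k^*W$ must have $W_y>0$, since $\tilde\gamma^\vep_y>0$) that this zone is never reached, and this is not addressed. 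Second, unlike Lemma \ref{sharp_upper}, where the perturbation $\delta\xi$ sits \emph{inside} the $\alpha$-power so that the $\delta$-terms factor out to a single power of $(t+\vep+\delta\xi)$, your $\delta\eta$ is an \emph{additive} correction; consequently the denominators become $(k(t+\vep)^\alpha+\delta\eta')^{2+\theta}$ and the positive term $\theta\phi^\theta\delta\eta''/(k(t+\vep)^\alpha+\delta\eta')^{2+\theta}$ does not cancel against the $(\phi^\theta)_y$ term via the identity $((\phi^{2\theta+1}\xi')'=0)$ that makes the paper's computation work. "Choosing $\eta$ in the same spirit as $\xi$\dots keeps the $\delta$-terms under control" is asserted but not substantiated, and near the endpoints the competing singularities of $\phi^\theta$, $\eta''$ and $(A+\delta\eta')^{2+\theta}$ need a careful exponent count that is absent. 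In short, the blueprint is plausible but not a proof; the route the paper actually takes avoids all of this by using the already-established geometric convexity of the free boundary.
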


\begin{proof}  

We recall from \cite[Thm. 4.14]{CMP} that the free boundary curves $\tilde\gamma^{(\vep)}(\cdot,-R_{\alpha})$ and $\tilde\gamma^{(\vep)}(\cdot,R_{\alpha})$ are, respectively, convex and concave. By Lemma \ref{sharp_upper}, we have 
$$
\tilde \gamma^{(\vep)}(t, R_\alpha)- \tilde \gamma^{(\vep)}(t, -R_\alpha) \leq C(t+\vep)^\alpha, 
$$
so that, using the  convexity  of $\tilde \gamma^{(\vep)}(\cdot,-R_\alpha)$ and the fact that $\tilde \gamma^{(\vep)}(0,-R_\alpha)=-R_\alpha\vep^\alpha$ and 
$\tilde \gamma^{(\vep)}(T,-R_\alpha)= a$, we obtain
\begin{align*}
\tilde \gamma^{(\vep)}(t, R_\alpha)  \leq  \tilde \gamma^{(\vep)}(t, -R_\alpha) + C(t+\vep)^\alpha   \leq -R_\alpha \vep^\alpha \left(\frac{T-t}{T}\right) + a\,  \frac{t}{T} + C(t+\vep)^\alpha  \leq C(\vep+t)^\alpha. 
\end{align*}
In the same way, one can prove that 
$$
\tilde \gamma^{(\vep)}(t, - R_\alpha)\geq -C(\vep+t)^\alpha. 
$$
Hence, for any $y\in [-R_\alpha, R_\alpha]$, 
$$
| \tilde \gamma^{(\vep)}(t, y)| \leq \max \{  | \tilde \gamma^{(\vep)}(t, - R_\alpha)| , |\tilde \gamma^{(\vep)}(t, R_\alpha)|\} \leq C(\vep+t)^\alpha.
$$
\end{proof}

The next step is to prove sharp lower bounds for  $\tilde \gamma^{(\vep)}_y$. 

\begin{lem}\label{sharp_lower} There exists a constant $C>0$, independent of $\vep$, such that 
$$
\tilde \gamma^{(\vep)}_y(t,y)\geq C^{-1}(t+\vep)^{\alpha} \qquad \text{in}\; (0,T)\times (-R_\alpha,R_\alpha).
$$
\end{lem}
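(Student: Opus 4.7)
The plan is to adapt the strategy of Lemma \ref{sharp_upper} by constructing an explicit \emph{subsolution} of the PDE
\[
v_{tt} + (2\theta+1)\frac{(\phi^\theta)_y}{v^{2+\theta}}v_y + \frac{\theta\phi^\theta}{v^{2+\theta}}v_{yy} - (2+\theta)\frac{\theta\phi^\theta}{v^{3+\theta}}(v_y)^2 - \frac{(\phi^\theta)_{yy}}{v^{\theta+1}} = 0
\]
satisfied by $v = \tilde\gamma^\vep_y$, and concluding by a contact-point argument. The natural candidate is $w(t,y) = k(t+\vep+\delta\xi(y))^\alpha$ with $k>0$ now \emph{small} and $\xi$ smooth and even, mirroring the construction in Lemma \ref{sharp_upper} but with reversed inequalities. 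Using $\alpha(\theta+1)=2-\alpha$ and the explicit form $(\phi^\theta)_{yy}=-\alpha(1-\alpha)$, the same scaling as in \eqref{qksldfvkk} yields
\[
\mathrm{LHS}[w] \geq (t+\vep+\delta\xi)^{\alpha-2}\bigl[k\alpha(\alpha-1)+\alpha(1-\alpha)k^{-1-\theta}\bigr] - Ck^{-1-\theta}\delta(t+\vep+\delta\xi)^{\alpha-2},
\]
whose leading bracket $\alpha(1-\alpha)(k^{-1-\theta}-k)$ is positive as soon as $k<1$. For $\delta$ correspondingly small, $w$ becomes a strict subsolution.

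For the parabolic boundary: at $t=0$, $v(0,\cdot)=\vep^\alpha$ while $w(0,y)=k(\vep+\delta\xi)^\alpha\leq\vep^\alpha$ provided $k$ and $\delta\|\xi\|_\infty$ are sufficiently small; at $t=T$, Lemma \ref{lem.boundtildegammavep} gives $v(T,\cdot)\geq C^{-1}$, while $w(T,\cdot)$ can be made smaller than $C^{-1}$ by further shrinking $k$. The contact-point step then proceeds word-for-word as in Lemma \ref{sharp_upper}: setting $\lambda_* = \sup\{\lambda\in(0,1] : \lambda w\leq v\}$, any hypothetical interior contact point $(t_0,y_0)$ would satisfy $v=\lambda_* w$, $v_y=\lambda_* w_y$, $v_{tt}\geq \lambda_* w_{tt}$, $v_{yy}\geq\lambda_* w_{yy}$; inserting this into the PDE for $v$ together with the strict subsolution property of $\lambda_* w$ (which persists for every $\lambda_*\in(0,1]$ by the same scaling computation) produces a contradiction. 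Letting $\delta\to 0$ then yields $\tilde\gamma^\vep_y(t,y)\geq k(t+\vep)^\alpha$.

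The main obstacle is the lateral boundary $y=\pm R_\alpha$: the blowup-barrier trick from Lemma \ref{sharp_upper}, where $\xi\to+\infty$ made the boundary comparison vacuous, is unavailable in the subsolution direction because a barrier from below cannot diverge. A promising resolution is to take instead $w(t,y) = k(t+\vep)^\alpha(R_\alpha^2-y^2)^{1/(\theta+1)}$; a direct calculation (using $\alpha(\theta+1)=2-\alpha$ and $\phi^\theta=\tfrac{\alpha(1-\alpha)}{2}(R_\alpha^2-y^2)$, and collecting terms with the specific exponent $p=1/(\theta+1)$) shows this remains a strict subsolution for $k$ small and \emph{vanishes} at $y=\pm R_\alpha$, making the lateral boundary comparison automatic. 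This, however, yields only the non-uniform bound $\tilde\gamma^\vep_y(t,y)\geq k(t+\vep)^\alpha(R_\alpha^2-y^2)^{1/(\theta+1)}$. Upgrading this to the uniform estimate of the statement is the hardest step; I expect it to proceed via the ODE satisfied by the free-boundary curves $\tilde\gamma^\vep(\cdot,\pm R_\alpha)$—obtained by evaluating \eqref{gamtileq} at $y=\pm R_\alpha$, where the coefficient of $\tilde\gamma^\vep_{yy}$ vanishes—combined with the concavity/convexity of these curves recalled in the proof of Corollary \ref{sharp_support} and the interior lower bound just obtained, to transfer the sharp $(t+\vep)^\alpha$ rate all the way to the lateral boundary.
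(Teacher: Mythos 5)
Your diagnosis of the difficulty is right: the lateral boundary is exactly where a direct adaptation of Lemma~\ref{sharp_upper} breaks down, and you are honest that neither of your two routes closes the gap. But that is the gap, and the paper proceeds differently, so let me spell out both the problem and the paper's fix.

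The paper does not run a subsolution comparison on $v=\tilde\gamma^\vep_y$ at all. Instead it changes unknown to $v^\vep(t,y):=(m^\vep)^\theta(t,\tilde\gamma^\vep(t,y))=\phi^\theta(y)/(\tilde\gamma^\vep_y)^\theta$, and proves an \emph{upper} bound $v^\vep(t,y)\leq k\,\phi^\theta(y)(t+\vep)^{-\alpha\theta}$ by a maximum-principle argument applied to a differential inequality for $v^\vep$ (inequality~\eqref{iqkhjsdfkg}). The crucial payoff is precisely the boundary: both $v^\vep$ and the comparison function $k\,\phi^\theta(y)(t+\vep)^{-\alpha\theta}$ vanish at $y=\pm R_\alpha$, so a putative positive interior maximum of their difference cannot sit on the lateral boundary, and that is verified in one line. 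The barrier never needs to diverge or degenerate there. The argument also uses Lemma~\ref{sharp_upper} in an essential way: the coefficient $1/(\tilde\gamma^\vep_y)^2$ in inequality~\eqref{newyork} is estimated from below by $C^{-1}(t+\vep)^{-2\alpha}$, which is what lets the good term dominate when $k$ is chosen large. So the upper bound of Lemma~\ref{sharp_upper} is an input, but the geometry of the barrier is what resolves the boundary. Undoing $v^\vep=\phi^\theta/(\tilde\gamma^\vep_y)^\theta$ then gives the \emph{uniform} lower bound $\tilde\gamma^\vep_y\geq k^{-1/\theta}(t+\vep)^{\alpha}$ with no degeneracy in $y$.

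There is also a quieter problem in your first route that is worth flagging: in the subsolution direction you must retain the term $-(2+\theta)\,\theta\phi^\theta v^{-(3+\theta)}(v_y)^2$ in the comparison at the contact point, because it pushes in the wrong direction for you (the paper could discard it in Lemma~\ref{sharp_upper} because its sign helps the supersolution). With $w=k(t+\vep+\delta\xi)^\alpha$ and $\xi'=\phi^{-(2\theta+1)}$ near $\pm R_\alpha$, this term contributes $k^{-1-\theta}\theta\phi^\theta\alpha\delta^2(\alpha-3)(t+\vep+\delta\xi)^{\alpha-4}(\xi')^2$, which is negative and, after comparing with the good term of order $(t+\vep+\delta\xi)^{\alpha-2}$, blows up relative to it as $y\to\pm R_\alpha$; so ``the same scaling computation'' does not go through as claimed. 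This reinforces why the degenerate barrier $k(t+\vep)^\alpha(R_\alpha^2-y^2)^{1/(\theta+1)}$ cannot be avoided on your route, and that barrier, as you observe, only gives a bound vanishing at the boundary. Finally, the proposed patch via the free-boundary ODE is unconvincing as stated: the two-sided acceleration estimate for $\ddot\gamma_L,\ddot\gamma_R$ used later (Lemma~\ref{lem: gamt gamtt}) itself requires \emph{both} Lemmas~\ref{sharp_upper} and~\ref{sharp_lower}, so invoking it here risks circularity, and you do not identify which one-sided piece survives without Lemma~\ref{sharp_lower}, nor how it would propagate the sharp rate to the boundary. The paper avoids the issue entirely by working in the $v^\vep$ variable.
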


\begin{proof} Let us set $v^{(\vep)}(t,y)= (m^{(\vep)})^\theta(t, \tilde \gamma^{(\vep)}(t,y))= \frac{\phi^\theta(y)}{(\tilde \gamma^{(\vep)}_y(t,y))^\theta}$. Following \cite[Prop. 4.13]{CMP}  we have
\be\label{iqkhjsdfkg}
-v^{(\vep)}_{tt}- \frac{\theta v^{(\vep)}}{(\tilde \gamma^{(\vep)}_y)^2} v^{(\vep)}_{yy} \leq -\frac{v^{(\vep)}_y}{(\tilde \gamma^{(\vep)}_y)^2}\left( \frac{(\phi^\theta)_y}{(\tilde \gamma^{(\vep)}_y)^\theta}-v^{(\vep)}_y\right).
\ee
We claim that, for $k$ large enough (but independent of $\vep$), 
\begin{equation} \label{eq:refsug1}
    v^{(\vep)}(t,y)\leq k \phi^\theta(y)(t+\vep)^{-\alpha\theta}.
\end{equation} To prove this, we argue by contradiction and assume that the map $w(t,y):= v^{(\vep)}(t,y)-k \phi^\theta(y)(t+\vep)^{-\alpha\theta}$ has a positive maximum. Let $(t_0,y_0)$ be a maximum point. Note that the maximum cannot be reached at $t_0=0$ for $k\geq 1$, nor for $y_0=\pm R_\alpha$. It cannot be reached for $t_0=T$ for $k$ large enough  thanks to the lower bound on $\tilde \gamma^{(\vep)}_y(T, \cdot)$ in Lemma \ref{lem.boundtildegammavep}. 
Thus $(t_0,y_0)$ is an interior maximum and we have at $(t_0,y_0)$
$$
v^{(\vep)}_{tt} \leq k \phi^\theta \alpha\theta(\alpha\theta+1) (t_0+\vep)^{-\alpha\theta-2}, \;\quad v^{(\vep)}_y= k (\phi^\theta)_y(t_0+\vep)^{-\alpha\theta}, \; \quad
v^{(\vep)}_{yy} \leq -k \alpha(1-\alpha)(t_0+\vep)^{-\alpha\theta}\,,
$$
where we used, in the last inequality, that  $(\phi^\theta)_{yy}=-\alpha(1-\alpha)$ when $\phi$ is positive  (see \eqref{eq:phi defi}).
Thus
$$
-v^{(\vep)}_{tt}- \frac{\theta v^{(\vep)}}{(\tilde \gamma^{(\vep)}_y)^2} v^{(\vep)}_{yy}    \geq - k \phi^\theta \alpha\theta(\alpha\theta+1) (t_0+\vep)^{-\alpha\theta-2} + 
\frac{\theta v^{(\vep)}}{(\tilde \gamma^{(\vep)}_y)^2}k \alpha(1-\alpha)(t_0+\vep)^{-\alpha\theta} 
$$
and using the fact that $v^{(\vep)}(t_0,y_0)>k \phi^\theta(y_0)(t_0+\vep)^{-\alpha\theta}$, the bound $\tilde \gamma^{(\vep)}_y \leq C(t+\vep)^\alpha$, and the equality $-\alpha\theta-2= -2\alpha\theta -2\alpha$, we get
\be\label{newyork}
\begin{split}
-v^{(\vep)}_{tt}- \frac{\theta v^{(\vep)}}{(\tilde \gamma^{(\vep)}_y)^2} v^{(\vep)}_{yy} 
& \geq  k (t_0+\vep)^{-\alpha\theta-2}\phi^\theta \left( - \alpha\theta(\alpha\theta+1) +  \theta C^{-2}k \alpha(1-\alpha)\right) \; > \; 0,
\end{split}
\ee
for  $k>\frac{C^2(\alpha\theta+1)}{(1-\alpha)}$.
On the other hand, 
\begin{align*}
 -\frac{v^{(\vep)}_y}{(\tilde \gamma^{(\vep)}_y)^2}\left( \frac{(\phi^\theta)_y}{(\tilde \gamma^{(\vep)}_y)^\theta}-v^{(\vep)}_y\right) = - \frac{k ((\phi^\theta)_y)^2(t_0+\vep)^{-\alpha\theta}}{(\tilde \gamma^{(\vep)}_y)^2} \left(\frac{v^{(\vep)}}{\phi^\theta} - k (t_0+\vep)^{-\alpha\theta} \right) 
 \end{align*}
where we used that $(\tilde \gamma^{(\vep)}_y(t_0,y_0))^\theta\, v^{(\vep)}(t_0,y_0)=  \phi^\theta(y_0)$. Since  $v^{(\vep)}(t_0,y_0)>k \phi^\theta(y_0)(t_0+\vep)^{-\alpha\theta}$, the right-hand side of the above equality is negative so  \eqref{iqkhjsdfkg}  yields
$$
-v^{(\vep)}_{tt}- \frac{\theta v^{(\vep)}}{(\tilde \gamma^{(\vep)}_y)^2} v^{(\vep)}_{yy}<0
$$
which  contradicts \eqref{newyork}. So we have proved that $v^{(\vep)}(t,y)\leq k \phi^\theta(y)(t+\vep)^{-\alpha\theta}$, which implies the result because $\tilde \gamma^{(\vep)}_y(t,y)=  \frac{\phi(y)}{  v^{(\vep)}(t,y)^{1/\theta}}$. 
\end{proof}

We observe now that upper bounds on $m$ follow from the previous result.

%

\begin{lem}\label{est_m}  The sequence $m^{(\vep)}$ is locally bounded and there exists $C>0$, independent of $\vep$, such that, for $t\in [0,T]$,
\be\label{esti_infty}
\|m^{(\vep)}(t)\|_\infty \leq C(t+\vep)^{-\alpha} 
\ee
and
\be\label{esti.mtheta+1}
\|m^{(\vep)}(t)\|_{\theta+1}^{\theta+1} \leq C(t+\vep)^{-\alpha\theta}\,.
\ee
In addition, $m^{(\vep)}$ is H\"older continuous, locally in time, uniformly in $\vep$. Namely, for every $\delta>0$, there exists $\beta=\beta(\delta^{-1})$ such that $m\in C^{\beta}([\delta,T-\delta]\times \R)$, and 
\be \label{esti_Holder} \| m^{(\vep)} \|_{C^{\beta}([\delta,T-\delta]\times \R)}\leq C_{\delta}. \ee
\end{lem}

\begin{proof} We proved  in Lemma \ref{sharp_lower} that $v(t,y)=m^\theta(t, \tilde \gamma^{(\vep)}(t,y))$ satisfies \eqref{eq:refsug1}, and since $\gamma^{(\vep)}$ is a bijection between the support of $m(t)$ and $m_0$, this readily yields $\|m^{(\vep)}(t)\|_\infty\leq k^{\frac1\theta}\,  \|\phi\|_\infty  (t+\vep)^{-\alpha}$, hence \rife{esti_infty}. Then \rife{esti.mtheta+1} readily follows by interpolation.
As for the H\"older continuity, in view of Lemmas \ref{sharp_upper} and \ref{sharp_lower}, the result follows from the interior regularity estimate of \cite[Thm. 4.21]{CMP}.
\end{proof}

We can now bound the first two derivatives of the free boundary curves.
\begin{lem} \label{lem: gamt gamtt}  Let $\gamma_L(t)=\gamma^{(\vep)}(t,-R_{\alpha})$ and $\gamma_R(t)=\gamma^{(\vep)}(t,R_{\alpha})$ be the left and right free boundary curves, respectively. Then there exists a positive constant $C$, independent of $\vep$, such that, for $t\in [0,T]$
\be
|\dot \gamma_L(t)|,|\dot \gamma_R(t)|\leq C(t+\vep)^{\alpha-1},
\ee
\be \frac{1}{C}(t+\vep)^{\alpha-2}\leq \Ddot \gamma_L(t) \leq C (t+\vep)^{\alpha-2}, \quad \frac{1}{C}(t+\vep)^{\alpha-2}\leq -\Ddot \gamma_R(t) \leq C (t+\vep)^{\alpha-2}.\ee
\begin{proof} The proof follows \cite[Prop. 4.5]{Munoz3}. It is enough to treat $\gamma_L$. The main point is that from \cite[Thm. 4.14]{CMP} we have the second derivative estimate
\[ 0< ((m_0^{(\vep)})^{\theta})_x (-R_{\alpha} \vep^{\alpha}) \|\gamma^{(\vep)}_x(t,\cdot)\|_{\infty}^{-(1+\theta)}\leq \Ddot \gamma_L(t) \leq ((m_0^{(\vep)})^{\theta})_x (-R_{\alpha} \vep^{\alpha}) \| (\gamma^{(\vep)}_x(t,\cdot))^{-1}\|_{\infty}^{1+\theta}.  \]
Thus, in view of Lemmas \ref{sharp_upper} and \ref{sharp_lower}, and the definition of $m^{(\vep)}_0$, we get sharp second derivative estimates for $\Ddot \gamma_L$:
\[ \frac{1}{C}(t+\vep)^{\alpha-2}\leq \Ddot \gamma_L(t) \leq C (t+\vep)^{\alpha-2}.\]
The bound on $\dot \gamma_L$ then follows by basic interpolation thanks to Corollary \ref{sharp_support}.

\end{proof}
    
\end{lem}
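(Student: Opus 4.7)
The plan is to follow the strategy of \cite[Prop. 4.5]{Munoz3}, as the author indicates. By symmetry it suffices to treat $\gamma_L$. The starting ingredient is the two-sided second-derivative bound
\[
0< ((m_0^{\vep})^{\theta})_x (-R_{\alpha} \vep^{\alpha}) \,\|\gamma^{\vep}_x(t,\cdot)\|_{\infty}^{-(1+\theta)}\leq \Ddot \gamma_L(t) \leq ((m_0^{\vep})^{\theta})_x (-R_{\alpha} \vep^{\alpha})\, \| (\gamma^{\vep}_x(t,\cdot))^{-1}\|_{\infty}^{1+\theta}
\]
provided by \cite[Thm. 4.14]{CMP}. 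My task is therefore to track the $\vep$- and $t$-dependence of each of the three factors on the right.

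First I would identify the behavior of the prefactor. Since $m_0^\vep(x)=\vep^{-\alpha}\phi(\vep^{-\alpha}x)$, one has $((m_0^\vep)^\theta)_x(x)=\vep^{-\alpha(\theta+1)}(\phi^\theta)_y(\vep^{-\alpha}x)$. Using the explicit form $\phi^\theta(y)=\tfrac12\alpha(1-\alpha)(R_\alpha^2-y^2)_+$, one gets $(\phi^\theta)_y(-R_\alpha)=\alpha(1-\alpha)R_\alpha>0$, and the identity $\alpha(\theta+1)=2-\alpha$ yields $((m_0^\vep)^\theta)_x(-R_\alpha\vep^\alpha)\sim \vep^{\alpha-2}$. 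Next I would translate the bounds on $\tilde\gamma^\vep_y$ from Lemmas \ref{sharp_upper} and \ref{sharp_lower} back to bounds on $\gamma^\vep_x$: since $\gamma^\vep_x(t,x)=\vep^{-\alpha}\tilde\gamma^\vep_y(t,\vep^{-\alpha}x)$, we have $\|\gamma^\vep_x(t,\cdot)\|_\infty\leq C\vep^{-\alpha}(t+\vep)^\alpha$ and $\|(\gamma^\vep_x(t,\cdot))^{-1}\|_\infty\leq C\vep^\alpha(t+\vep)^{-\alpha}$. Plugging into the two-sided estimate and using $\alpha-2+\alpha(1+\theta)(\pm 1) = \mp(2-\alpha)+(\alpha-2) = \alpha-2$ (upper bound) resp.\ $= \alpha-2$ again after the algebra with the opposite sign on $\alpha(1+\theta)$, the factors of $\vep$ cancel exactly and give
\[
\frac1C (t+\vep)^{\alpha-2}\leq \Ddot\gamma_L(t)\leq C(t+\vep)^{\alpha-2}.
\]

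For the bound on $\dot\gamma_L$, I would use a Taylor-interpolation argument combining the just-proven bound on $\Ddot\gamma_L$ with the support estimate from Corollary \ref{sharp_support}, which gives $|\gamma_L(t)|\leq C(t+\vep)^\alpha$. Writing
\[
\gamma_L(t+\tau)-\gamma_L(t)=\tau\,\dot\gamma_L(t)+\int_t^{t+\tau}(t+\tau-s)\Ddot\gamma_L(s)\,ds
\]
and choosing $\tau$ comparable to $t+\vep$ (and $\leq T-t$, with a separate treatment near $t=T$ where $\gamma_L$ is smooth), the double integral is controlled by $C\tau^2(t+\vep)^{\alpha-2}\leq C(t+\vep)^\alpha$, while both boundary terms are of the same order by Corollary \ref{sharp_support}. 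Solving for $\dot\gamma_L(t)$ and dividing by $\tau\sim t+\vep$ gives $|\dot\gamma_L(t)|\leq C(t+\vep)^{\alpha-1}$, as required.

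The main step is really the bookkeeping of the powers of $\vep$ in the second-derivative bound, which relies crucially on the identity $\alpha(\theta+1)=2-\alpha$ to make the $\vep$-dependence cancel and leave only the $(t+\vep)^{\alpha-2}$ scaling; I don't expect a substantive obstacle beyond this algebraic matching, since everything else is a direct citation of earlier lemmas.
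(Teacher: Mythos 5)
Your proposal is correct and follows essentially the same approach as the paper's proof: it starts from the two-sided second-derivative bound of \cite[Thm.\ 4.14]{CMP}, tracks the $\vep$- and $t$-dependence via the explicit form of $m_0^\vep$ and the bounds on $\tilde\gamma^\vep_y$ from Lemmas \ref{sharp_upper}--\ref{sharp_lower} (with the cancellation driven by the identity $\alpha(\theta+1)=2-\alpha$), and then derives the $\dot\gamma_L$ bound by interpolation using Corollary \ref{sharp_support}. The only blemish is the garbled exponent-bookkeeping sentence ``$\alpha-2+\alpha(1+\theta)(\pm 1)=\mp(2-\alpha)+(\alpha-2)=\alpha-2$'', which does not parse as written, but the surrounding explicit computations make clear the $\vep$-powers cancel and the $(t+\vep)^{\alpha-2}$ scaling survives, so this is a typo rather than a substantive gap.
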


 \subsection{Estimates on $u^{(\vep)}$}\label{subsec.u}
 
  As a third step, we deduce estimates on $u^{(\vep)}$.  For this purpose, we closely follow the approach used in \cite[Section 4]{Munoz3}, where similar ideas were exploited to analyze the sharp rate of the value function  as $t \to \infty$. We recall that the oscillation of a function $f:A\to \R$ defined on a set $A$ is defined by
  \begin{equation}
      \operatorname{osc}_{A}f=\sup_{A}f-\inf_{A}f.
  \end{equation}
\begin{lem} \label{lem: osc u} There exists a constant $C>0$ such that, for every $t_0 \in [0,(T-\vep)/2],$
     \be \label{oscu ep bd} \underset{\{t\in [t_0,2t_0+\vep],m^{(\vep)}(t)>0\}}{\emph{osc}} (u^{(\vep)}(t))\leq  
  C(t_0+\vep)^{2\alpha-1}.
   \ee
Moreover, if $\theta \in (0,2)$, then
\be \label{oscu ep bd 2} \underset{\{t\in [0,T],m^{(\vep)}(t)>0\}}{\emph{osc}} (u^{(\vep)}(t))\leq  
  C, \ee
  if $\theta>2$, then
  \be \underset{\{t\in [t_0,T],m^{(\vep)}(t)>0\}}{\emph{osc}} (u^{(\vep)}(t))\leq  
  C(t_0+\vep)^{2\alpha-1}\ee
  and if $\theta=2$, then
  \be \underset{\{t\in [t_0,T],m^{(\vep)}(t)>0\}}{\emph{osc}} (u^{(\vep)}(t))\leq  
  C(1+|\ln(t_0)|).\ee
  
\end{lem}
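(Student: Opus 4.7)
The plan is to first establish the local estimate \eqref{oscu ep bd} via a dynamic programming argument at a suitably chosen reference time, and then derive the three global estimates by iterating \eqref{oscu ep bd} over a dyadic partition. The key algebraic identity throughout is $1-\alpha\theta=2\alpha-1$, which ensures that the kinetic and running-cost contributions to the action scale identically.

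For \eqref{oscu ep bd}, fix $t_{0}\in[0,(T-\vep)/2]$, set $A=\{(t,x):t\in[t_{0},2t_{0}+\vep],\,m^{\vep}(t,x)>0\}$, and choose $t^{*}:=\min(3(t_{0}+\vep),T)$, so that $t^{*}-s\geq t_{0}+\vep$ for all $s\in[t_{0},2t_{0}+\vep]$. For the upper bound, dynamic programming along the straight line from $(s,x)\in A$ to $(t^{*},y)$ for an arbitrary $y$ in the support of $m^{\vep}(t^{*})$ gives
\[ u^{\vep}(s,x)\leq u^{\vep}(t^{*},y)+\frac{|y-x|^{2}}{2(t^{*}-s)}+\int_{s}^{t^{*}}(m^{\vep})^{\theta}(\tau,\gamma(\tau))\,d\tau. \]
Using Corollary \ref{sharp_support} to bound $|x|,|y|\leq C(t_{0}+\vep)^{\alpha}$ together with $t^{*}-s\geq t_{0}+\vep$, the kinetic term is $\leq C(t_{0}+\vep)^{2\alpha-1}$; using \eqref{esti_infty} together with $1-\alpha\theta=2\alpha-1$, the integral is also $\leq C(t_{0}+\vep)^{2\alpha-1}$. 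Taking the infimum over $y$ and then the supremum over $(s,x)\in A$,
\[ \sup_{A}u^{\vep}\leq\inf_{\mathrm{supp}(m^{\vep}(t^{*}))}u^{\vep}(t^{*},\cdot)+C(t_{0}+\vep)^{2\alpha-1}. \]
For the matching lower bound, since the optimal trajectory in the variational representation coincides with the Lagrangian flow, writing $x=\gamma^{\vep}(s,y_{0})$ gives $u^{\vep}(s,x)\geq u^{\vep}(t^{*},\gamma^{\vep}(t^{*},y_{0}))$ by non-negativity of the running cost, and $\gamma^{\vep}(t^{*},y_{0})\in\mathrm{supp}(m^{\vep}(t^{*}))$; hence $\inf_{A}u^{\vep}\geq\inf_{\mathrm{supp}(m^{\vep}(t^{*}))}u^{\vep}(t^{*},\cdot)$, and subtracting yields \eqref{oscu ep bd}.

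To derive the global bounds, define $\tau_{0}=t_{0}$ and $\tau_{k+1}=2\tau_{k}+\vep$, so that $\tau_{k}+\vep=2^{k}(t_{0}+\vep)$; apply \eqref{oscu ep bd} on each interval $[\tau_{k},\tau_{k+1}]$ as long as $\tau_{k}\leq(T-\vep)/2$, and control the residual oscillation on $[\tau_{K},T]$ by $O(1)$ using the same DP argument with $t^{*}=T$ together with the fact that $u^{\vep}(T,\cdot)$ is uniformly bounded on $[a,b]$. This bounds the total oscillation on $[t_{0},T]$ by a geometric sum $C\sum_{k=0}^{K}(2^{k}(t_{0}+\vep))^{2\alpha-1}+C$. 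When $\theta<2$ the common ratio $2^{2\alpha-1}>1$ and the sum is dominated by its last term, bounded by $CT^{2\alpha-1}=O(1)$, giving \eqref{oscu ep bd 2}; when $\theta>2$ the ratio is $<1$ and the sum is dominated by its first term $C(t_{0}+\vep)^{2\alpha-1}$; when $\theta=2$ the sum is $C$ times the number of dyadic generations, giving $O(\log(T/(t_{0}+\vep)))\leq C(1+|\ln t_{0}|)$.

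The main subtlety is the lower bound step in the basic estimate: one must use that an optimal trajectory starting at $(s,x)$ with $m^{\vep}(s,x)>0$ stays in the support of $m^{\vep}$ and agrees with the Lagrangian flow $\gamma^{\vep}$, so that its endpoint $\gamma^{\vep}(t^{*},y_{0})$ lies in $\mathrm{supp}(m^{\vep}(t^{*}))$; this identification rests on the $C^{1}$ regularity of $u^{\vep}$ in $\{m^{\vep}>0\}$ and on the characterization \eqref{supportmep} of the support in terms of the flow. Everything else, including the edge cases $t^{*}=T$, follows from the sharp support and $L^{\infty}$ estimates established in Corollary \ref{sharp_support} and Lemma \ref{est_m}.
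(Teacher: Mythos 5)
Your basic strategy — dynamic programming along a straight line to a reference time, bounding the kinetic term via Corollary \ref{sharp_support} and the running cost via \eqref{esti_infty}, together with the observation that $u^{\vep}$ decreases along the Lagrangian flow — is the same as the paper's. But there is a concrete gap in your upper bound. You claim $t^{*}-s\geq t_{0}+\vep$ for all $s\in[t_{0},2t_{0}+\vep]$, with $t^{*}=\min(3(t_{0}+\vep),T)$. This holds when $t^{*}=3(t_{0}+\vep)$, but when $3(t_{0}+\vep)>T$ (so $t^{*}=T$) it fails: taking $s=2t_{0}+\vep$ gives $t^{*}-s=T-2t_{0}-\vep$, which can be arbitrarily close to $0$ as $t_{0}\uparrow (T-\vep)/2$. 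Since in this regime the supports of $m^{\vep}(s)$ and $m^{\vep}(T)$ do not collapse to a point, $|y-x|$ stays of order one and the kinetic term $|y-x|^{2}/(2(t^{*}-s))$ genuinely blows up. So your proof of \eqref{oscu ep bd}, as written, does not cover $t_{0}$ near $(T-\vep)/2$.

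The fix is exactly the observation you already invoke for the lower bound: since $\overline{u}^{\vep}(t,y):=u^{\vep}(t,\gamma^{\vep}(t,y))$ satisfies $\overline{u}^{\vep}_{t}=-\,(m^{\vep})^{\theta}-\tfrac12(u^{\vep}_{x})^{2}\leq 0$, the supremum of $u^{\vep}$ on $A$ is attained on the initial slice $\{t=t_{0}\}$ and its infimum on the terminal slice. You should therefore only run the upper-bound DP from $s=t_{0}$ (and may as well take $t^{*}=2t_{0}+\vep$), which always gives time gap $t_{0}+\vep$; that is precisely the paper's argument. Once this is done, the remainder of your proof — including the dyadic chaining for the three global cases — is correct, though the paper avoids the dyadic decomposition by applying the same single-step argument directly on $[t_{0},T]$ (the kinetic term stays $O(1)$ thanks to the Lagrangian monotonicity, and $\int_{t_{0}}^{T}(\tau+\vep)^{-\alpha\theta}d\tau$ produces the $O(1)$, $O((t_{0}+\vep)^{2\alpha-1})$, and $O(|\ln t_{0}|)$ scales in the three regimes).
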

\begin{proof}
For simplicity, we write $(u,m):=(u^{(\vep)},m^{(\vep)})$ and $\gamma:=\gamma^{(\vep)}$ the associated Lagrangian flow in \rife{flow defi}. 
Setting $S=\{t\in [t_0,2t_0+\vep],m(t)>0\}$, we claim that 
\be \label{eq: osc u asagsaf}
\sup_{S} u =  \sup_{S\cap \{t=t_0\}} u,\,\, \text{ and } \,\,\, \inf_S u   =  \inf_{S\cap \{t=2t_0+\vep\}} u. 
\ee
 To show \rife{eq: osc u asagsaf}, we switch to Lagrangian coordinates, setting $\overline{u}(t,x)=u(t,\gamma(t,x))$. In these coordinates, the set $S$ becomes the rectangle $  [t_0,2t_0+\vep] \times  (-R_{\alpha} \vep^{\alpha},R_{\alpha} \vep^{\alpha}),$ and we have
 \[ \overline{u}_t(t,x)=u_t(t,\gamma(t,x))+\gamma_t(t,x) u_x(t,\gamma(t,x)) = u_t-u_x^2=-m^{\theta}-\frac{1}{2}u_x^2\leq 0,\]
 which implies \eqref{eq: osc u asagsaf}. Now, given $(t_0,x)\in S\cap\{t=t_0\}$ and $(2t_0+\vep,y)\in S\cap\{t=2t_0+\vep\} $, we let $\beta:[t_0,2t_0+\vep] \to \mathbb{R}$ be the linear function joining $(t_0,x)$ and $(2t_0+\vep,y)$. Since $S$ is contained in the positivity set of $m$, which is confined by the free boundary curves, as a result of Corollary \ref{sharp_support} we have
 \[ |\dot\beta|= \frac{|y-x|}{t_0+\vep}\leq  C (t_0+\vep)^{\alpha-1}. \]
Since $u$ is a classical solution to the Hamilton-Jacobi equation in \eqref{eq.planning}, one has, for $0\le t_1\le t_2\le T$, the dynamic programming principle
\[
u(t_1,x)
=\inf_{\substack{\eta\in H^1((t_1,t_2))\\ \eta(t_1)=x}}
\Big\{
\int_{t_1}^{t_2} \Big(\tfrac12|\dot\eta(s)|^2 + m(s,\eta(s))^\theta\Big)\,ds
+ u\big(t_2,\eta(t_2)\big)
\Big\}.
\]
Thus,  using $\beta$ as a competitor, together with  Lemma \ref{est_m}, we deduce, up to increasing the value of $C$,
 \[u(t_0,x) \leq \int_{t_0}^{2t_0+\vep} \left(\frac12|\dot\beta(s)|^2 + m(s,\beta(s))^{\theta}\right)ds + u(2t_0+\vep,y) \leq C(t_0+\vep)^{2\alpha-1} + u(2t_0+\vep,y).\]
Hence, \eqref{oscu ep bd} follows from \eqref{eq: osc u asagsaf}. 
 The remaining inequalities readily follow from the same idea, while taking into account the sign of $2\alpha-1$. For instance, when $\theta \in (0,2)$, $2\alpha-1>0$, and we may repeat the above argument on the interval $[0,T]$, taking $\beta:[0,T] \to \R$ to be the linear function joining $(0,x)$ and $(T,y)$, and letting $S=\{m>0\}$. Then \eqref{eq: osc u asagsaf} still holds, and we get
$$
\underset{\{t\in [0,T],m(t)>0\}}{\rm{osc}} (u(t)) = \sup_{x,y}\,\,  [u(0,x)-u(T,y) ]\leq 
 \int_{0}^{T} \left(\frac12|\dot\beta(s)|^2 + m(s,\beta(s))^{\theta}\right)ds 
 \leq C(T+\vep)^{2\alpha-1}, 
 $$
 which yields \eqref{oscu ep bd 2}. 
 
\end{proof}

We now obtain a local in time gradient estimate which is sharp as $t\to 0$, by using the well-known displacement convexity properties of $m$.
\begin{lem}Let $\delta\in (0,T/2)$. For every $t_0 \in (0,T-\vep-\delta],$
\be \label{grad bd ep} \|u^{(\vep)}_x(t_0+\vep,\cdot) \|_{L^{\infty}(\{m(t_0+\vep)>0\})}\leq C_{\delta} (t_0+\vep)^{\alpha-1}\ee   

\end{lem}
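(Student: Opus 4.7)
Following \cite[Prop.~4.10]{Munoz3}, I would deduce the bound from a displacement-convexity (Lyapunov) argument in the Lagrangian coordinates of Subsection~\ref{subsec.m}. Setting $w(t,y):=-\tilde\gamma^\vep_t(t,y)$, one has $u^\vep_x(t,\tilde\gamma^\vep(t,y))=w(t,y)$, and since $\tilde\gamma^\vep(t,\cdot)$ maps $[-R_\alpha,R_\alpha]$ bijectively onto $\{m^\vep(t)>0\}$, the claim reduces to showing $\|w(t_0+\vep,\cdot)\|_{L^\infty([-R_\alpha,R_\alpha])}\le C_\delta(t_0+\vep)^{\alpha-1}$.

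Since \eqref{gamtileq} is the Euler--Lagrange equation of the action $\iint\phi\bigl(\tfrac12\tilde\gamma_t^2-\tfrac{1}{\theta+1}\phi^{\theta}\tilde\gamma_y^{-\theta}\bigr)\,dy\,dt$, the Hamiltonian $\mathcal{E}^\vep(t):=\int\bigl(\tfrac12\phi|\tilde\gamma^\vep_t|^2+\tfrac{1}{\theta+1}\phi^{\theta+1}(\tilde\gamma^\vep_y)^{-\theta}\bigr)dy$ is conserved in time, and testing \eqref{gamtileq} against $\phi\,\tilde\gamma^\vep$ yields the Lyapunov identity
\[
\tfrac12 J^\vep_{tt}(t)=\int\phi|\tilde\gamma^\vep_t|^2\,dy-\tfrac{\theta}{\theta+1}\int\phi^{\theta+1}(\tilde\gamma^\vep_y)^{-\theta}\,dy,\qquad J^\vep(t):=\int\phi|\tilde\gamma^\vep|^2\,dy.
\]
Integrating this identity over an interval of length $\sim t_0+\vep$, bounding $J^\vep$ and $J^\vep_t$ at the endpoints by Corollary~\ref{sharp_support} and the oscillation bound \eqref{oscu ep bd} (combined with Cauchy--Schwarz), while controlling $\int\phi^{\theta+1}(\tilde\gamma_y^\vep)^{-\theta}\,dy$ by \eqref{esti.mtheta+1} translated in Lagrangian variables, I obtain the weighted $L^2$ estimate
\[
\int\phi\,|w(t^*,\cdot)|^2\,dy\le C_\delta(t_0+\vep)^{2(\alpha-1)}
\]
at some time $t^*\in[t_0+\vep/2,t_0+\vep]$.

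To upgrade this to a pointwise bound at $t_0+\vep$, I differentiate \eqref{gamtileq} in $t$: the function $w$ satisfies a linear equation whose principal part $\partial_{tt}+\tfrac{\theta\phi^\theta}{(\tilde\gamma^\vep_y)^{\theta+2}}\partial_{yy}$ is uniformly elliptic on $[t^*,T-\delta/2]\times(-R_\alpha,R_\alpha)$ by Lemmas~\ref{sharp_upper}--\ref{sharp_lower}. A barrier of the form $W(t,y)=k(t+\vep)^{\alpha-1}(1+\delta\xi(y))$, with $\xi$ smooth, convex, and blowing up at $y=\pm R_\alpha$ exactly as in the proof of Lemma~\ref{sharp_upper}, is a supersolution for $k$ large thanks to the identity $\alpha(\theta+1)+\alpha=2$. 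Comparing $\pm w$ with $W$ on that rectangle, the vertical sides are controlled by Lemma~\ref{lem: gamt gamtt}, the horizontal side $t=T-\delta/2$ by the interior regularity \cite[Thm.~4.21]{CMP} (which produces the constant $C_\delta$), and the horizontal side $t=t^*$ by the $L^2$ bound above combined with a mean-value argument along the $y$-direction using the equation for $w_y$.

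The main obstacle is the sharpness of the exponent $\alpha-1$: a Bernstein-type estimate applied to the HJ equation alone would produce a much worse blow-up as $t\to 0$. The gain comes from the displacement-convexity identity, which exploits both equations of the MFG system simultaneously and detects the precise self-similar scaling; the residual difficulty is purely technical, namely to check that the barrier works on all three pieces of the parabolic boundary at once, which mirrors the calculation already carried out in Lemma~\ref{sharp_upper}.
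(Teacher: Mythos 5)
Your proposal takes a genuinely different route from the paper, and while the starting idea is natural, there is a gap that I don't think closes.

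The paper's proof does not go through the second–moment Lyapunov identity. Instead it uses the displacement convexity inequality from \cite[Lem.~2.5]{Munoz3}, which controls the \emph{second} derivative of $u$:
\[
\int_0^T\!\!\intr m^{1/2}u_{xx}^2\,\zeta \;\le\; 4\Big|\int_0^T\!\!\intr m^{1/2}\ddot\zeta\Big|.
\]
Choosing $\zeta$ supported near $t_0+\vep$ and using \eqref{esti_infty} and Corollary~\ref{sharp_support}, this yields $\int m^{1/2}u_{xx}^2(t_i)\le C(t_0+\vep)^{\alpha/2-2}$ at two good times $t_1<t_0+\vep<t_2$. One then converts this into an oscillation bound on $u_x(t_i,\cdot)$ by a Cauchy–Schwarz with $\int m^{-1/2}\le C(t_0+\vep)^{3\alpha/2}$ (from Lemma~\ref{sharp_upper}), pins down $u_x$ on the free boundary via Lemma~\ref{lem: gamt gamtt}, and finally propagates the bound to the whole slab $[t_1,t_2]$ through the maximum principle for the space–time elliptic equation satisfied by $u$ in $\{m>0\}$.

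Your Lyapunov computation, by contrast, controls only a \emph{first}-order derivative: testing \eqref{gamtileq} against $\phi\,\tilde\gamma^\vep$ yields (after the integration in time, using $J^\vep\le C(t+\vep)^{2\alpha}$, the $\hat\gamma_y$-lower-bound to control $\int\phi^{\theta+1}(\tilde\gamma^\vep_y)^{-\theta}$, and a Taylor argument to control $J^\vep_t$ at the endpoints — the oscillation bound \eqref{oscu ep bd} is not really what is needed here) a bound of the type
\[
\int_\R\phi\,|\tilde\gamma^\vep_t(t^*,\cdot)|^2\,dy\le C(t_0+\vep)^{2\alpha-2}.
\]
This is an $L^2$ estimate on $u_x$ weighted by $m$, which is one derivative weaker than what the paper obtains. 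In one dimension a weighted $L^2$ bound on $u_{xx}$ is exactly what allows a scalar (Cauchy–Schwarz) promotion to a pointwise bound on $u_x$; a weighted $L^2$ bound on $u_x$ does not give any pointwise control on $u_x$ at $t^*$.

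This is where the barrier step breaks down. Your plan is to differentiate \eqref{gamtileq} in $t$, obtain a linear uniformly elliptic equation for $w=-\tilde\gamma^\vep_t$, and compare $\pm w$ with a supersolution $W$ on the rectangle $[t^*,T-\delta/2]\times(-R_\alpha,R_\alpha)$. For the comparison principle you need $|w|\le W$ on the \emph{entire} parabolic boundary, including the bottom edge $t=t^*$. But at $t=t^*$ you only have the weighted $L^2$ bound; the ``mean-value argument along the $y$-direction using the equation for $w_y$'' does not produce the required pointwise inequality at \emph{every} $y\in(-R_\alpha,R_\alpha)$. (A Chebyshev argument gives a bound on a large set of $y$'s, not everywhere, and that is not enough for a barrier comparison.) Two additional minor points: the Hamiltonian $\mathcal{E}^\vep$ as you wrote it (with the plus sign) is not the conserved quantity — the conserved quantity has a minus in front of the potential term — and it is in any case not used in the argument. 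To make your route work you would need an independent way to turn the $L^2$ bound on $w(t^*,\cdot)$ into pointwise data, which is precisely the step the paper sidesteps by working with one more derivative via the displacement convexity formula.
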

\begin{proof}  
Assume first that $t_0\in [0,T/2-\vep]$. Let $\zeta \in C^{\infty}_c(0,T)$, with $0\leq \zeta \leq 1$, $\zeta \equiv 1$ in $[(t_0+\vep)/2,3(t_0+\vep)/2]$, $\zeta \equiv 0$ outside of $((t_0+\vep)/4,7(t_0+\vep)/4)$, and \be |\Ddot \zeta(t)| \leq \frac{C}{(t_0+\vep)^2}, \quad t\in [0,T]. \ee 
Our starting point is the displacement convexity property of $m$, see e.g. \cite[Lem 2.5]{Munoz3}, which yields
\[ \label{displ subsol} \int_{0}^T\intr  m^{p} u_{xx}^{2}\zeta (t)dxdt \leq \frac1{p|p-1|}  \left|\int_{0}^T \intr m^{p} \Ddot  \zeta(t)dxdt \right|\]
for any $p>0, p\neq 1$. Let us choose here some $p<\theta$. 
%
In view of \eqref{esti_infty} and Corollary \ref{sharp_support},  and using the properties of $\zeta$, we obtain
\[ \left|\int_{0}^T \intr m^{p} \Ddot  \zeta(t)dxdt \right| \leq C(t_0+\vep)^{-\alpha p+\alpha-1}.\]
Combining the last two inequalities, and since $\zeta=1$ in $[(t_0+\vep)/2,3(t_0+\vep)/2]$, we then infer that
\be \int_{(t_0+\vep)/2}^{3(t_0+\vep)/2}\int_{\R} m^{p} u_{xx}^2   dx dt \leq C(t_0+\vep)^{-\alpha p+\alpha-1}. 
\ee
In particular, there exist $t_1 \in [(t_0+\vep)/2,t_0+\vep]$ and $t_2 \in [t_0+\vep,3(t_0+\vep)/2]$ such that
\be \label{energ aosakd}
\int_{\R} m^{p}u_{xx}^2(t_1,x)dx \leq C(t_0+\vep)^{-\alpha p+\alpha-2}, \quad \int_{\R} m^{\frac12}u_{xx}^2(t_2,x)dx \leq C(t_0+\vep)^{-\alpha p+\alpha-2}.  
\ee
We now use the well-known fact that $u$ satisfies, in the set $\{m>0\}$, a space-time elliptic equation of the form 
\be 
-\text{tr}(A(u_t,u_x)D^2_{tx}u)=0.
\ee 
In particular, differentiating this equation with respect to $x$, it follows that $u_x$ satisfies the maximum and minimum principle on the domain $D=\{\text{supp}(m)\}\cap ([t_1,t_2]\times \R) $. If the maximum of $|u_x|$ is attained when $x=\gamma_L(t)$ or $x=\gamma_R(t)$, the estimate follows from \eqref{flow defi} and Lemma \ref{lem: gamt gamtt}.  So it remains to estimate $|u_x|$ when $t=t_1$ and $t=t_2$. We have
\be\osc_{[\gamma_L(t_1),\gamma_R(t_1)]}u_x(t_1,\cdot) \leq \int_{\gamma_L(t_1)}^{\gamma_R(t_1)}|u_{xx}(t_1,\cdot)|
\leq  \label{energ aosakd1} \left(\int_{\R}m^{p} u_{xx}^2(t_1,\cdot)\right)^{\frac12}\left(\int_{\gamma_L(t_1)}^{\gamma_R(t_1)}m^{-p}(t_1,\cdot)\right)^{\frac12}.\ee
On the other hand, recalling that, $\tilde{\gamma_y}(t,y)=\phi(y)/m(t,\tilde{\gamma}(t,y))$, we obtain from Lemma \ref{sharp_upper} that
\begin{multline} \label{energ aosakd2}\int_{\gamma_L(t_1)}^{\gamma_R(t_1)}m^{-p}(t_1,\cdot)= \int_{-R_{\alpha}}^{R_{\alpha}}(m(t_1,\tilde{\gamma}(t_1,y ))^{-p}\tilde{\gamma}_y(t_1,y)dy\\
=\int_{-R_{\alpha}}^{R_{\alpha}}\phi(y)^{-p}\tilde{\gamma}_y(t_1,y)^{1+p}dy\leq C (t_0+\vep)^{(1+p)\alpha }
\end{multline}
where we used that $\phi^{-p}$ is integrable since $p<\theta$. Thus, combining \eqref{energ aosakd}, \eqref{energ aosakd1} and \eqref{energ aosakd2},
we obtain
\be \osc_{[\gamma_L(t_1),\gamma_R(t_1)]}u_x(t_1,\cdot)\leq C(t_0+\vep)^{\alpha-1}, \ee
so that, from \eqref{flow defi} and Lemma \ref{lem: gamt gamtt}, 
\be\label{dec25} 
\|u_x(t_1,\cdot)\|_{L^{\infty}(\{m(t_1)>0\})}\leq C(t_0+\vep)^{\alpha-1}. 
\ee
The same reasoning   allows us to estimate $\|u_x(t_2,\cdot)\|_{L^{\infty}(\{m(t_0+\vep)>0\})},$ which concludes the proof in the case where $t_0\in (0,T/2-\vep]$. Now, if $t_0\in (T/2-\vep,T-\vep-\delta]$, we may simply repeat the above argument, using instead a test function $\zeta$ such that $\zeta \equiv 1$ in $[T/2,t_0+\vep+\delta/2]$ and $\zeta \equiv 0$ outside of $(T/4,t_0+\vep+\delta)$, which satisfies
\[ |\Ddot \zeta| \leq \frac{C}{\delta^2}.\]
This yields a constant $C_{\delta}$ such that
\[ \|u_x(t_0+\vep,\cdot)\|_{L^{\infty}(\{m(t_0+\vep)>0\})}\leq C_{\delta}. \]
Up to increasing the value of $C_{\delta}$, this implies \eqref{grad bd ep}.
    \end{proof}

In general, the function $u^{(\vep)}$ is not expected to be unique (even up to a constant) outside of the support of $m$. We now show that there always exists a sufficiently well-behaved choice for the non-unique values of $u^{(\vep)}$.
\begin{lem} \label{lem: extension} One may redefine $u^{(\vep)}$ outside of the set $\{m^{(\vep)}>0\}$ in such a way that $(u^{(\vep)},m^{(\vep)})$ remains a solution, and
\be \label{grad bd exterior} \|u^{(\vep)}_x(t)\|_{L^{\infty}({\{m^{(\vep)}(t)=0\}})}\leq C(t+\vep)^{\alpha-1}, \quad t\in [0,T]. \ee
    
\end{lem}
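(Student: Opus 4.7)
The idea is to exploit the freedom in choosing $u^\vep$ outside $\{m^\vep>0\}$, where the Hamilton--Jacobi equation degenerates to $-u_t+\tfrac12 u_x^2=0$, and redefine $u^\vep$ by a dynamic programming (Hopf--Lax-type) formula whose boundary data is the trace of $u^\vep$ on the free boundary. That trace is itself already controlled by the flow equation and Lemma~\ref{lem: gamt gamtt}.

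First, I would record a trace bound at the free boundary. Since $(u^\vep,m^\vep)$ is classical in $\{m^\vep>0\}$, the flow equation \eqref{flow defi} evaluated at the endpoints $x=\pm R_\alpha\vep^\alpha$ gives $\dot\gamma_L(t)=-u_x^\vep(t,\gamma_L(t))$ and $\dot\gamma_R(t)=-u_x^\vep(t,\gamma_R(t))$, so Lemma~\ref{lem: gamt gamtt} yields
$$
\max\bigl(|u_x^\vep(t,\gamma_L(t))|,\,|u_x^\vep(t,\gamma_R(t))|\bigr)\leq C(t+\vep)^{\alpha-1}.
$$

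Second, I would redefine $u^\vep$ on $\{m^\vep=0\}$ by the value function formula
$$
u^\vep(t,x):=\inf_{\beta(t)=x}\left\{\int_t^T\left(\tfrac12|\dot\beta(s)|^2+(m^\vep)^\theta(s,\beta(s))\right)ds+\bar u_T(\beta(T))\right\},
$$
where $\bar u_T$ is a Lipschitz extension of $u^\vep(T,\cdot)|_{[a,b]}$ to $\R$ with (one-sided) slopes $u_x^\vep(T,a)$ and $u_x^\vep(T,b)$ outside $[a,b]$, and the infimum is over absolutely continuous $\beta:[t,T]\to\R$. Standard value-function theory gives a globally Lipschitz viscosity solution of \eqref{eq.planning_eps}$_1$ that, by optimality of the Lagrangian flow $\gamma^\vep$ in the interior of the support (implicit in the construction of \cite{CMP}), coincides with the original $u^\vep$ on $\{m^\vep>0\}$. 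Hence $(u^\vep,m^\vep)$ remains a solution.

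Third, the gradient estimate \eqref{grad bd exterior} follows from the representation. At $(t,x)$ with $x>\gamma_R(t)$ (the case $x<\gamma_L(t)$ is symmetric), the minimizing trajectory $\beta^*$ is an affine segment while it remains outside the support and then continues as an interior optimal path, and $u_x^\vep(t,x)=-\dot\beta^*(t)$. Comparing $u^\vep(t,x)$ with the competitor that joins $(t,x)$ by a straight line to $(t+\tau,\gamma_R(t+\tau))$ for a suitable $\tau>0$ and then follows the interior optimal trajectory, the gradient $-\dot\beta^*(t)$ inherits the trace bound from Step~1 when $(t,x)$ is near the free boundary, and inherits the bound $|u_x^\vep(T,b)|$ (via the affine tail of $\bar u_T$) when $(t,x)$ is far from the support. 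In both regimes Corollary~\ref{sharp_support} ensures that the relevant spatial displacements and travel times scale as $(t+\vep)^\alpha$, so the velocities scale as $(t+\vep)^{\alpha-1}$, which is the claim.

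\textbf{Main obstacle.} The most subtle point is the consistency of the redefinition with the already constructed interior $u^\vep$: one needs that $\gamma^\vep$ is optimal for the control problem inside the support, so that the value function formula reproduces $u^\vep$ there and $(u^\vep,m^\vep)$ is still a solution in the sense of Definition~\ref{gensol}. A second delicate point is keeping the gradient bound \emph{sharp}, proportional to $(t+\vep)^{\alpha-1}$ rather than to an $\vep$-independent constant, uniformly on the unbounded set $\{m^\vep(t)=0\}$; this forces one to tailor the comparison curve (in particular its endpoint $\beta^*(T)$) to the geometry given by Corollary~\ref{sharp_support} and Lemma~\ref{lem: gamt gamtt}, rather than relying on a generic Lipschitz bound of $\bar u_T$.
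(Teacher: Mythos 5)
Your strategy is structurally different from the paper's, and while it identifies the right ingredients, it leaves a genuine gap at the crucial step.

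What you get right: the trace bound $|u^\vep_x(t,\gamma_L(t))|=|\dot\gamma_L(t)|\leq C(t+\vep)^{\alpha-1}$ from Lemma~\ref{lem: gamt gamtt} is indeed the key input, and the freedom in choosing $u^\vep$ outside $\{m^\vep>0\}$, where the HJ equation becomes $-u_t+\tfrac12 u_x^2=0$, is what one exploits. You also correctly flag the two subtle points.

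The gap is in your Step~3. Comparing $u^\vep(t,x)$ with a competitor trajectory bounds the value $u^\vep(t,x)-u^\vep(t+\tau,\gamma_R(t+\tau))$ from above; it does not bound $u^\vep_x(t,x)$. To get the pointwise gradient bound from the value-function formula you would have to identify the actual minimizing trajectory $\beta^*$ through $(t,x)$ and control $\dot\beta^*(t)$, and your proposal does not do this. The assertion that ``the relevant spatial displacements and travel times scale as $(t+\vep)^\alpha$'' is unsupported and in fact false away from the free boundary: for $(t,x)$ at $O(1)$ distance from the support, the travel time to $t=T$ is $O(1)$, not $(t+\vep)^\alpha$. (That regime happens to be harmless because the slope there equals $u^\vep_x(T,b)=-\dot\gamma_R(T)=O(1)\leq C(t+\vep)^{\alpha-1}$, but that must be argued, not subsumed into a scaling heuristic.) Moreover, Definition~\ref{gensol} requires $u\in C^1$, and a Hopf--Lax value function is a priori only Lipschitz and semiconcave; you would need to rule out shocks to keep the extension $C^1$, which is an extra argument you do not supply.

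For comparison, the paper avoids both issues with a direct construction: the exterior region is decomposed so that part of it is fibered by the \emph{tangent lines} to the convex free-boundary curve $\gamma_L$ (respectively concave $\gamma_R$), and $u^\vep_x$ is defined to be constant along each tangent line, equal to $-\dot\gamma_L(t_1)$ at the tangency time $t_1\geq t$. The convexity and monotonicity of $\gamma_L$ then give $|\dot\gamma_L(t_1)|\leq|\dot\gamma_L(t)|\leq C(t+\vep)^{\alpha-1}$ with no value-function machinery, and on the remaining piece ($x\leq\gamma_L(t^*)$, where $\dot\gamma_L(t^*)=0$) one sets $u^\vep$ constant, which glues to give an explicit $C^1$ solution of $-u_t+\tfrac12 u_x^2=0$ with the gradient bound built in. Your route could in principle be repaired by characterizing the Hopf--Lax minimizers and checking $C^1$ regularity, but as written the proof of \eqref{grad bd exterior} is not complete.
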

\begin{proof}
This follows \cite[Prop. 6.2]{Munoz3}. By symmetry, it is sufficient to explain how to construct the values of $u(t,x)$ in the region  $\Omega=\{(t,x)\in  [0,T]\times \R:x\leq \gamma_L(t)\}$.
To fix the ideas, we assume first that there exists $t^*$ such that $\dot \gamma_L(t^*)=0$. In this case, $t^*$ is the unique global minimum point of the convex function $\gamma_L$. The region $\Omega$ is then subdivided into four subregions with non-overlapping interiors:
\be \Omega_1=\{0\leq t\leq t^*, \, \gamma_L(t^*)\leq x \leq \gamma_L(t)\}, \quad \Omega_2=\{0 \leq t\leq t^*, \,\, x\leq \gamma_L(t^*)\}, \ee
\be \Omega_3= \{t^*\leq t \leq T, \, \gamma_L(t^*)\leq x \leq \gamma_L(t)\},\quad  \Omega_4=\{t^*\leq t \leq T, \,\, x\leq \gamma_L(t^*)\}. \ee
We extend $u$ piece by piece from $\text{supp}(m)$ to $\text{supp}(m)\cup \Omega$, starting with $\Omega_1$. Consider the family $\{L_t\}$ of line segments \be L_t(s):=(s,l_t(s)):=(s,\gamma_L(t)+(s-t)\dot \gamma_L(t)), \quad s\in [0,t].\ee
Then, by the convexity and monotonicity of $\gamma_L$, it follows that $\Omega_1$ is the disjoint union of these line segments. Recalling that $\dot \gamma_L(t)=-u_x(\gamma_L(t),t),$ it follows that, through the method of characteristics, one may define the values of $u$ on $\Omega_1$ along the line segments $L_t(\cdot)$ in terms of the values of $u(t,\gamma_L(t))$ and $u_x(\gamma_L(t))$. Note that the regions $\Omega_1$ and $\Omega_2$ overlap on the horizontal segment $\{(t,\gamma_L(t^*)):t\in [0,t^*]\}$, where $u_x\equiv u_t \equiv \dot \gamma_L(t^*)=0$. Hence, extending $u$ continuously to be constant on $\Omega_2$ produces a $C^1$ solution to the HJ equation on $\Omega_1 \cup \Omega_2$. Performing the same construction on $\Omega_3$, and then extending $u$ to be constant on $\Omega_4$, we get a $C^1$ extension of $u$ to all of $\text{supp}(m) \cup \Omega$. The key feature of this solution is that, by construction, at any $(t,x)\in \Omega_1$, we have $u_x(t,x)=u_x(t_1,\gamma_L(t_1))=-\dot \gamma_L(t_1)$ for some $t_1\in [t,t^*]$ and, therefore, using the convexity and monotonicity of $\gamma_L$ on $[0,t^*]$,
\be |u_x(t,x)|\leq |\dot \gamma_L(t_1)|\leq |\dot \gamma_L(t)|\leq C(t+\vep)^{\alpha-1}.\ee
In the alternative case where $\dot \gamma_L$ never vanishes, the region $\Omega$ is instead subdivided into two subregions with non-overlapping interiors:
\be \Omega_1=\{0\leq t\leq T, \, l_T(t)\leq x \leq \gamma_L(t)\}, \quad \Omega_2=\{0 \leq t\leq T, \,\, x\leq l_T(t)\}. \ee
In the region $\Omega_1$, we define $u(t,x)$ through the method of characteristics exactly as before. In the region $\Omega_2$, we define $u(t,x)$ as a linear function in $x$ with slope $u_x(t,l_T(t))=-\dot \gamma_L(T)$, namely
\be \label{soacaskweri} u(t,x)=(l_T(t)-x)\dot \gamma_L(T)+u(t,l_T(t)).\ee
By construction, this is a $C^1$ extension of $u$, and it  solves the HJ equation classically. Indeed, differentiating \eqref{soacaskweri} for $(t,x)\in \Omega_2$,
$$u_t(t,x)=l_T'(t) \dot \gamma_L(T) +u_t(t,l_T(t))+u_x(t, l_T(t))l_T'(t)=\dot \gamma_L(T)^2+\dot \gamma_L(T)^2/2-\dot \gamma_L(T)^2= \dot \gamma_L(T)^2/2=\frac12 u_x(t,x)^2.$$
\end{proof}
We end by observing that, locally in time, $Du^{(\vep)}$ is H\"older continuous, uniformly as $\vep \to 0$.
\begin{lem} \label{lem: u regu} Let $\delta>0$, and assume that $0<\vep<\delta$. There exist $\beta\in (0,1)$ and $C>0$ depending on $\delta$, but not on $\vep$, such that
\be [Du^{(\vep)}]_{C^{\beta}([\delta,T-\delta]\times \R)}\leq C. \ee    
\end{lem}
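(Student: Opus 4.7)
The plan is to combine three ingredients: interior uniform elliptic regularity for $u^\vep$ inside the support of $m^\vep$, the explicit characteristic representation of the extension outside, and the $C^{1,1}$ regularity of the free boundary curves to glue the two estimates together.

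First, I would normalize $u^\vep$ by subtracting a constant (depending on $\vep$) using the oscillation estimate of Lemma \ref{lem: osc u} so that $u^\vep$ is uniformly bounded on $[\delta,T-\delta]\times\R$. Together with the gradient bounds \eqref{grad bd ep} (inside the support) and \eqref{grad bd exterior} (outside the support, after the extension of Lemma \ref{lem: extension}), this yields uniform $L^\infty$ control of $u^\vep$ and $u^\vep_x$ on the time slab $[\delta,T-\delta]\times\R$.

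Inside the support, Lemmas \ref{sharp_upper} and \ref{sharp_lower} give two-sided bounds $C^{-1}\leq m^\vep\leq C$ depending only on $\delta$, and Lemma \ref{est_m} provides a uniform H\"older modulus for $m^\vep$. As recalled from \cite{CMP, Munoz}, the MFG system for $(u^\vep,m^\vep)$ is equivalent, inside $\{m^\vep>0\}$, to a quasilinear, nondegenerate space--time elliptic equation for $u^\vep$ whose ellipticity constants and H\"older norms of the coefficients are controlled solely by $\delta$ and the uniform bounds on $(m^\vep, u^\vep_x)$. Standard Schauder/De~Giorgi--Nash--Moser bootstrap then yields uniform interior H\"older estimates on $Du^\vep$ on compact subsets of $\{m^\vep>0\}\cap([\delta,T-\delta]\times\R)$.

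Outside the support, by the construction in Lemma \ref{lem: extension}, $u^\vep_x(t,x)=-\dot\gamma_L(t_1(t,x))$ (resp.\ $-\dot\gamma_R$) where $t_1$ is the time at which the straight characteristic issued from $(t,x)$ meets the free boundary. By Lemma \ref{lem: gamt gamtt}, on $[\delta,T-\delta]$ the second derivatives $\ddot\gamma_L,\ddot\gamma_R$ are uniformly bounded, so $\dot\gamma_L,\dot\gamma_R$ are uniformly Lipschitz and transverse to the free boundary; consequently $(t,x)\mapsto t_1(t,x)$ is Lipschitz on the exterior and $u^\vep_x$ is uniformly Lipschitz there. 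The H\"older regularity of $u^\vep_t$ will then follow from the HJ equation and the uniform H\"older regularity of $(m^\vep)^\theta$ from Lemma \ref{est_m}.

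The main obstacle is the matching across the free boundary. The interior elliptic estimates deteriorate as one approaches $\partial\{m^\vep>0\}$ since $m^\vep$ vanishes there, so they are not uniform up to the boundary on their own. I would overcome this via a covering argument that, at a point at distance $r$ from the free boundary, uses an interior Schauder estimate in a ball of size comparable to $r$ scaled against the exterior Lipschitz bound on $u^\vep_x$, together with the $C^{1,1}$ control of $\gamma_L,\gamma_R$ and the $C^1$ matching provided by the extension across $\{m^\vep=0\}$. Closing this argument, balancing the degeneration inside against the Lipschitz control outside, is the technical heart of the proof.
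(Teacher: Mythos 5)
Your proposal identifies the right ingredients in the interior and exterior regions, but leaves unresolved precisely the step that contains all the difficulty, and this is a genuine gap rather than a minor omission.

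The paper's proof is very short and, crucially, is \emph{not} a two-region argument glued across the free boundary. It observes that \eqref{grad bd ep} and \eqref{grad bd exterior} give a uniform bound on $|u^\vep_x|$ on $[\delta,T-\delta]\times\R$, that \eqref{esti_Holder} gives a uniform H\"older bound on $(m^\vep)^\theta$, and then simply invokes \cite[Thm.~4.23]{CMP}. That theorem is a gradient H\"older estimate for the Hamilton--Jacobi equation $-u_t+\tfrac12 u_x^2=f$ with Lipschitz solution and H\"older right-hand side; it is intrinsic to the first-order HJ structure and yields regularity across the free boundary without any appeal to uniform ellipticity, and hence without any matching argument at all.

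Your plan, by contrast, relies on interior Schauder/De~Giorgi estimates inside $\{m^\vep>0\}$ which, as you yourself note, degenerate as $m^\vep\to0$ near the free boundary. (Also, your parenthetical claim that Lemmas~\ref{sharp_upper} and~\ref{sharp_lower} give two-sided bounds $C^{-1}\le m^\vep\le C$ on $[\delta,T-\delta]$ is not correct: those lemmas control $\tilde\gamma^\vep_y$, and combined with \eqref{euler ep} they give $m^\vep(t,\tilde\gamma^\vep(t,y))\asymp\phi(y)(t+\vep)^{-\alpha}$, so $m^\vep$ still vanishes like $\phi$ at the free boundary even on a fixed time slab.) You then propose a covering argument to balance the interior degeneration against the exterior Lipschitz bound, and explicitly flag that ``closing this argument \dots\ is the technical heart of the proof.'' That is exactly where the proof is missing: as written you have reduced the lemma to an unproved boundary-matching estimate, and there is no indication of how the scales are to be balanced or why the two-sided regularity is H\"older rather than something weaker. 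The paper avoids this entirely by using a regularity theorem that does not distinguish $\{m^\vep>0\}$ from its complement. If you want to pursue a route along your lines, you would need to either prove the matching estimate or replace the interior elliptic ingredient with an HJ-based estimate (semiconcavity/Lipschitz-to-$C^{1,\alpha}$ for $-u_t+\tfrac12 u_x^2=f$) that is already uniform up to and across $\{m^\vep=0\}$, which is precisely what \cite[Thm.~4.23]{CMP} provides.
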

\begin{proof} In view of \eqref{grad bd ep} and \eqref{grad bd exterior}, $|u^{(\vep)}_x|$ is uniformly bounded on $[\delta,T-\delta]\times \R$. Furthermore, from \eqref{esti_Holder}, there exists $\alpha' \in (0,1)$ such that the quantity $[(m^{(\vep)})^{\theta}]_{C^{\alpha'}}$ is bounded on $[\delta,T-\delta]\times \R$, uniformly in $\vep$. The result then follows from \cite[Thm. 4.23]{CMP}.
    
\end{proof}

\subsection{Existence of a solution}\label{subsec.exist}

Let us start by making precise the  notion of solution to \eqref{eq.planning} that will be used.

\begin{defn}\label{gensol}
We say that $(u,m)\in C^{1}((0,T)\times \mathbb{R})\times (C([0,T];\mathcal{P}(\R))\cap C((0,T)\times\R))$
is a  solution to \eqref{eq.planning}
if

\begin{itemize}
\item[(i)] $u$ is a classical solution to the HJ equation
\[
-u_{t}+\frac{1}{2}u_{x}^{2}= m^{\theta}\,\,\,\,\,\,(t,x)\in (0,T) \times \R,
\]
\item[(ii)] $m$ is a solution to the continuity equation
\[
m_{t}-(mu_{x})_{x}=0\,\,\,\,\,\, (t,x)\in (0,T) \times \R,
\]
in the distributional sense, with $m(0, \cdot)=\delta_0$ and $m(T, \cdot)=m_{T}$.
\end{itemize}
\end{defn}
\vskip1em
Having obtained basic estimates on the sequence $(u^{(\vep)},m^{(\vep)})$, we are now ready to prove the existence result by letting $\vep\to0$. In the following Theorem, ${\bf d_1}$ denotes the standard Wasserstein 1-distance in the space of probability measures (see \cite[Sec. 3.1]{Sa}).

\begin{prop} \label{prop:existence} Let $(u^{(\vep)},m^{(\vep)})$ be the solution to \eqref{eq.planning_eps}, with initial condition $m^{(\vep)}_0$, which satisfies 
$$
\intr u^{(\vep)}(T)m_T=0\,.
$$
Up to a subsequence, the functions $(u^{(\vep)},m^{(\vep)})$ converge locally uniformly to a solution $(u,m)$ to \eqref{eq.planning}. Furthermore, there exists $\tilde{\gamma}:(0,T) \times (-R_{\alpha},R_{\alpha}) \to \R$  such that $\tilde \gamma^{(\vep)} \to \tilde \gamma$ locally uniformly, 
\be\label{ilauekjzred2}
 \tilde \gamma_y(t,y) = \frac{ \phi(y)}{m(t, \tilde \gamma(t,y))} ,
\ee
\be\label{ilauekjzred3}
m(t) = \tilde \gamma(t, \cdot)_\sharp \phi, \qquad  \tilde \gamma_t(t,y)= -u_x(t, \tilde \gamma(t,y)),
\ee
and
\be\label{ilauekjzred1}
\tilde \gamma_{tt}  + \frac{\theta \phi^\theta}{(\tilde \gamma_y)^{2+\theta}}\tilde  \gamma_{yy} = \frac{(\phi^\theta)_y}{(\tilde \gamma_y)^{\theta+1}} \qquad \text{in} \quad  (0,T)\times (-R_\alpha,R_\alpha).
\ee
\end{prop}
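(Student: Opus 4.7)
My approach is to pass to the limit $\vep \to 0$ by Arzel\`a--Ascoli applied to the three sequences $\tilde\gamma^\vep$, $m^\vep$, $u^\vep$, exploiting the uniform estimates collected in Subsections \ref{subsec.m} and \ref{subsec.u}, and then to identify the limiting equations one at a time. For $\tilde\gamma^\vep$, Corollary \ref{sharp_support} gives a uniform $L^\infty$ bound on $[0,T]\times[-R_\alpha,R_\alpha]$, Lemma \ref{sharp_upper} controls $\tilde\gamma^\vep_y$ locally uniformly away from $t=0$, and the identity $\tilde\gamma^\vep_t(t,y)=-u^\vep_x(t,\tilde\gamma^\vep(t,y))$ combined with \eqref{grad bd ep} bounds the $t$-derivative; a subsequence therefore converges locally uniformly on $(0,T]\times[-R_\alpha,R_\alpha]$ to some $\tilde\gamma$. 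The interior H\"older estimate \eqref{esti_Holder} yields a locally uniformly convergent subsequence of $m^\vep$ on $(0,T]\times\R$ with continuous limit $m$; using \eqref{esti_infty} and Corollary \ref{sharp_support}, the supports of the probability measures $m^\vep(t,\cdot)$ are contained in $[-C(\vep+t)^\alpha,C(\vep+t)^\alpha]$, so $m^\vep(t)\to\delta_0$ weakly in $\cP(\R)$ as $t\to 0^+$ uniformly in $\vep$, and hence $m\in C([0,T];\cP(\R))$ with $m(0)=\delta_0$ and $m(T)=m_T$. Finally, after extending $u^\vep$ as in Lemma \ref{lem: extension} and using the normalization $\int_\R u^\vep(T)m_T=0$ to fix the free additive constant, the oscillation bound of Lemma \ref{lem: osc u}, the gradient bounds \eqref{grad bd ep} and \eqref{grad bd exterior}, and the $C^{1,\alpha}$ estimate of Lemma \ref{lem: u regu} give a subsequence converging in $C^1_{\rm loc}((0,T)\times\R)$ to some $u$.

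With these convergences in hand, the HJ equation $-u^\vep_t+\tfrac12(u^\vep_x)^2=(m^\vep)^\theta$ passes classically to the limit on $(0,T)\times\R$, and the continuity equation passes to the limit in the distributional sense since $m^\vep u^\vep_x \to m u_x$ locally uniformly. Sending $\vep\to 0$ in \eqref{euler ep} gives \eqref{ilauekjzred2}, in \eqref{flow defi} rescaled in $y$ gives the transport identity $\tilde\gamma_t=-u_x(t,\tilde\gamma)$, and in the push-forward relation $m^\vep(t)=\tilde\gamma^\vep(t,\cdot)_\sharp\phi$ (which holds by the definition of $\tilde\gamma^\vep$ and the change of variables formula) gives the push-forward part of \eqref{ilauekjzred3}.

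The one delicate point is the passage to the limit in the nonlinear elliptic equation \eqref{ilauekjzred1}, which requires $C^2_{\rm loc}$ convergence of $\tilde\gamma^\vep$ on $(0,T)\times(-R_\alpha,R_\alpha)$. This is where the sharp two-sided bounds of Lemmas \ref{sharp_upper} and \ref{sharp_lower} are essential: together they imply that the quasilinear equation \eqref{gamtileq} is uniformly elliptic on every compact subset of $(0,T)\times(-R_\alpha,R_\alpha)$, with coefficients built from the smooth functions $\phi^\theta$ and $(\phi^\theta)_y$ (which are smooth in the open interval $(-R_\alpha,R_\alpha)$). Standard interior Schauder theory then upgrades the uniform $C^{0,1}$ bound on $\tilde\gamma^\vep$ to a uniform $C^{2,\alpha}_{\rm loc}$ bound, so a further diagonal subsequence converges in $C^2_{\rm loc}$ and \eqref{ilauekjzred1} follows by passage to the limit.
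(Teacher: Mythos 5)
Your proposal is correct and follows essentially the same route as the paper: fix the constant of $u^\vep$ by the normalization $\int_\R u^\vep(T)m_T=0$ and extend it via Lemma \ref{lem: extension}, invoke Lemma \ref{est_m} for $m^\vep$, Lemmas \ref{lem: osc u} and \ref{lem: u regu} for $u^\vep$, and the two-sided bounds of Lemmas \ref{sharp_upper}--\ref{sharp_lower} to make \eqref{gamtileq} uniformly elliptic on compacta so that interior Schauder theory yields $C^{2,\alpha}_{\rm loc}$ compactness of $\tilde\gamma^\vep$, after which all four limiting identities follow by passage to the limit in \eqref{euler ep}, \eqref{flow defi}, and \eqref{gamtileq}.
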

\begin{proof} Since $u^{(\vep)}$ is only unique in $\{ m^{(\vep)}>0\}$, we may choose it in such a way that $\int_{\R} u^{(\vep)}(T)m_T=0$. Furthermore, thanks to Lemma \ref{lem: extension}, we may also assume that \eqref{grad bd exterior} holds.  On account of Lemmas \ref{lem: osc u}, \ref{lem: extension} and \ref{lem: u regu}, we deduce that the function $u^{(\vep)}$ is locally uniformly bounded,  and its space-time gradient is uniformly H\"older continuous, locally in time.
From Lemma \ref{est_m}, $m^{(\vep)}$ is also bounded and uniformly H\"older continuous, locally in time. 
It follows readily from the Arzel\'a-Ascoli theorem that a subsequence of $(u^{(\vep)},m^{(\vep)})$ converges to some limit $(u,m)\in C^{1}((0,T)\times \mathbb{R})\times C((0,T)\times \R)$ which satisfies \eqref{eq.planning} in the sense of Definition \ref{gensol}.
By using the continuity equation, one deduces from \eqref{grad bd ep} that, for $0\leq s\leq t\leq 3T/4$ and $\vep \ll 1$,
\begin{multline} \label{eq:d1conv pf exist}
    \textbf{d}_1(m^{(\vep)}(t),m^{(\vep)}(s))\leq \sup_{\varphi\in \operatorname{Lip}_1(\R)}\int_{\R} (m^{(\vep)}(t)-m^{(\vep)}(s))\varphi =\sup_{\varphi\in \operatorname{Lip}_1(\R)}\left(-\int_{s}^t\int_{\R}m^{(\vep)}u^{(\vep)}_x \varphi_x\right)\\\leq C_1\int_{s}^t (\tau+\vep)^{\alpha-1}d\tau= \frac{1}{\alpha} C_1((t+\vep)^{\alpha}-(s+\vep)^{\alpha})\leq C_2|t-s|^{\alpha}.
\end{multline}
On the other hand,  using  the duality of  the two equations in \eqref{eq.planning_eps}, with the fact that $\int_{\R} u^{(\vep)}(T)m_T=0$, we have
\begin{equation}
    \int_{T/2}^T\int_{\R}m^{(\vep)}(u^{(\vep)}_x)^2\leq \int u^{(\vep)}(T/2)m^{(\vep)}(T/2)\leq C_3,
\end{equation}
where we used \eqref{oscu ep bd} for the last inequality.
Thus,  for $T/2\leq s\leq t\leq T$, we have
\begin{equation} \label{eq:d1conv pf exist2}
    \textbf{d}_1(m^{(\vep)}(t),m^{(\vep)}(s))\leq \sup_{\varphi\in \operatorname{Lip}_1(\R)}\int_{\R} (m^{(\vep)}(t)-m^{(\vep)}(s))\varphi =\sup_{\varphi\in \operatorname{Lip_1(\R)}}\left(-\int_{s}^t\int_{\R}m^{(\vep)}u^{(\vep)}_x \varphi_x\right)\leq C_3^{\frac12}\sqrt{t-s}
\end{equation}
 and we conclude with the proof that $m^{(\vep)}(t)$ is equi-continuous from $[0,T]$ into the Wasserstein space endowed  with the $d_1$ distance.

Finally, the convergence of $(u^{(\vep)},m^{(\vep)})$ is locally uniform in time and space, and \eqref{eq:d1conv pf exist}--\eqref{eq:d1conv pf exist2}, together with the support bound of \eqref{sharp_support estim}, imply that $m^{(\vep)}(t)$ also converges in $\cP(\R)$, uniformly in time, which justifies the initial and final traces assumed by $m$.

In view of \eqref{flow defi} and Lemma \ref{sharp_upper}, the space-time gradient of $\tilde \gamma^{(\vep)}$ is locally bounded as $\vep \to 0$. Therefore, since $\tilde \gamma^{(\vep)}_y$ is also bounded below and $\tilde \gamma^{(\vep)}$ satisfies the elliptic equation \eqref{gamtileq}, it follows that $\tilde \gamma^{(\vep)}$ is locally bounded in $C^{2,\beta}$, for any $\beta \in (0,1)$. Thus, up to a subsequence, \eqref{ilauekjzred2}, \eqref{ilauekjzred3}, and \eqref{ilauekjzred1} follow by letting $\vep \to 0$ in \eqref{euler ep}, \eqref{flow defi} and \eqref{gamtileq}.

\end{proof}

Since the solution $(u,m)$ obtained in Proposition \ref{prop:existence} is the subsequential limit of $(u^{(\vep)},m^{(\vep)})$, we may also let $\vep \to 0$ in the estimates of the previous subsections. In particular, we have the bounds

\be \label{oscu bd} \underset{\{t\in[t_0,2t_0], m(t)>0\}}{\text{osc}} (u)\leq  
  Ct_0^{2\alpha-1}.
   \ee
 \be \label{bound.ux} \|u_x(t,\cdot) \|_{L^{\infty}(\R)}\leq C_{\delta} t^{\alpha-1}, \quad t \in (0,T-\delta],\ee   
\be\label{bound.tildegammay}
 |\tilde \gamma(t,y) | \leq Ct^\alpha, \qquad C^{-1}t^\alpha\leq \tilde \gamma_y(t,y)\leq Ct^\alpha, \qquad |\tilde \gamma_t(t,y)| \leq Ct^{\alpha-1},
\ee 
\be 
\label{m.bound.infty} \|m(t,\cdot)\|_{L^\infty(\R)} \leq Ct^{-\alpha}. 
\ee
We now show that if $\theta\in (0,2)$, then the solution $(u,m)$ can be shown to be unique by the standard Lasry-Lions argument. 
 Note that the case $\theta=1$ considered in \cite{LS24} is included in this range.

\begin{thm}\label{thm.exists} Assume that $\theta \in (0,2).$ There exists a solution $(u,m)$ to \eqref{eq.planning} such that,  
\be \label{u bd assumption} 
 \frac{u}{1+|x|}\in L^{\infty}([0,T]\times \R), \quad \int_{\R}mu_x^2dx \in L^1_{\operatorname{loc}}((0,T)),\ee 
and, for some $C>0$,
\be \label{esti.mtheta+1 2}  \quad \|m(t)\|^{\theta+1}_{L^{\theta+1}(\R)} \leq Ct^{-\alpha\theta}, \;\; \quad \int_{\R} |x|^2m(t)dx \leq C, \quad t\in (0,T].\ee Furthermore, under the assumption that \eqref{u bd assumption}--\eqref{esti.mtheta+1 2} holds for some $C>0$, $m$ is uniquely determined, and $u_x$ is unique in the set $\{m>0\}$.    
\end{thm}
\begin{proof} We note first that the existence follows from Proposition \ref{prop:existence} and the estimates obtained so far. In particular, the estimate on $m^{\theta}(t)$ follows from \eqref{esti.mtheta+1}, and the bounded second moments of $m$ follows trivially from the fact that the support of $m^{(\vep)}$ is uniformly bounded by \eqref{sharp_support estim}. Similarly, local integrability of $t\mapsto\int_{\R}mu_x^2(t)dx$ follows from the uniformly bounded support and the  gradient bound \eqref{grad bd ep}. As far as $u^{(\vep)}$ is concerned, we first observe that the normalization condition $\int_{\R} u^{(\vep)}(T)m_T=0$ and the bound \rife{oscu ep bd} imply that $u^{(\vep)}(t)$ is  bounded in the support of $m^{(\vep)}(t)$,  uniformly for $t$ bounded away from $t=0$. Due to \rife{grad bd exterior}, outside the support of $m^{(\vep)}(t)$ we have a global Lipschitz bound for $u^{(\vep)}(t)$, so we conclude that
$$
|u^{(\vep)}(t,x) | \leq C_T \,( |x| + 1) \qquad \hbox{for $t\geq T/2, x\in \R$,}
$$
for some constant $C_T$ depending on $T$ but independent of $\vep$.
To extend this bound to the interval $(0,T/2)$,  we note that the Hamilton-Jacobi equation implies
$$
|u^{(\vep)}_t | \leq (m^{(\vep)})^{\theta}+ \frac{1}{2} |u_x^{(\vep)}|^2 \leq C (t+\vep)^{2\alpha-2}\,,\quad t\in (0, T/2)
$$
by \rife{m.bound.infty} and \rife{grad bd ep}. Hence, we deduce
$$
|u^{(\vep)}(t,x) | \leq |u^{(\vep)}(T/2,x)| + C\,  \int_t^{T/2}  (s+\vep)^{2\alpha-2}ds \,.
$$
Since $2\alpha>1$ (because $\theta<2$), the last integral is  bounded uniformly in $\vep$ and $t$.  Finally, we conclude that 
$$
|u^{(\vep)}(t,x) | \leq C_T (|x|+ 1)
$$
for some possibly different constant $C_T$. This shows that the first condition in \rife{u bd assumption}  holds for the limit function $u$.  

We also note that, in view of \eqref{esti.mtheta+1 2}, the right hand side of the HJ equation is bounded in $L^{\frac{\theta+1}\theta}$. Since $\theta\in (0,2)$, we have $\frac{(1+\theta)}\theta > 3/2$, so it follows from \cite{CardaSilv} that any solution $u$ is H\"older continuous up to $t=0$.

The uniqueness result then readily follows from the standard Lasry-Lions argument. Namely, given any two solutions $(u^{(1)},m^{(1)})$ and $(u^{(2)},m^{(2)})$, we may subtract the corresponding equations in \eqref{eq.planning} and integrate over $(\delta,T-\delta)\times \R$ for $0<\delta \ll1$ ( this is justified by matching the first condition in \rife{u bd assumption} with  the second condition in \eqref{esti.mtheta+1 2}). We obtain
\begin{multline} \label{eq:lasrylionsgoodcase}
\int_{\delta}^{T-\delta}\int_{\R}\frac12(m^{(1)}+m^{(2)})(u^{(1)}_x-u^{(2)}_x)^2+ (m^{(1)}-m^{(2)})((m^{(1)})^{\theta}-(m^{(2)})^{\theta}) \\=-\int_{\R}(u^{(1)}-u^{(2)})(m^{(1)}-m^{(2)})(\delta)+ \int_{\R}(u^{(1)}-u^{(2)})(m^{(1)}-m^{(2)})(T-\delta).
\end{multline}
In view of the linear growth condition  and the continuity of $u^{(i)}$ up to the initial time, and the fact that $m^{(i)}\in C([0,T],\mathcal{P}(\R))$ and have finite second moments, the first term in the right hand side vanishes as $\delta\downarrow 0$.

For the second term, the following argument is valid for any value of $\theta$, as it does not rely on the regularity of $u$ up to $t=0$. We argue as in \cite{OPS}. From the HJ equation we have $u^{(i)}_t =-(m^{(i)})^{\theta}+\frac12({u^{(i)}_x})^2\geq -(m^{(i)})^{\theta},$ so the function $v^{(i)}(x,t)=u^{(i)}(x,t)+\int_{T/2}^{t}(m^{(i)})^{\theta}(x,s)ds$ satisfies $v_t\geq 0$. By \eqref{esti.mtheta+1 2}, we also have $\int_{T/2}^{T}(m^{(i)})^{\theta}(x,s)ds<\infty$ for almost every $x$. This implies that the pointwise limit $u^{(i)}(T,x):=\lim_{t\uparrow T}u^{(i)}(t,x)$ exists for almost every $x$. By the dominated convergence theorem, we then have $u^{(i)}(t)\to u^{(i)}(T) $ in $L^{p}_{\operatorname{loc}}(\R)$ for any $p\ge1$. By \eqref{esti.mtheta+1 2} and the fact that $m^{(i)}\in C([0,T],\mathcal{P}(\R))$, we also have $m^{(i)}(t)\rightharpoonup m_T$ weakly in $L^{\theta+1}(\R)$.

Thus, combining the $L^p_{\operatorname{loc}}(\R)$ convergence  $u^{(i)}(t) \to u^{(i)}(T)$, the weak $L^{\theta+1}(\R)$ convergence $m(t) \rightharpoonup m_T$, and the second moment bound of $m(t)$, we infer that the second term in the right hand side of \eqref{eq:lasrylionsgoodcase} tends to 0 as $\delta \to 0$. This implies that $m^{(1)}=m^{(2)}$, and $u^{(1)}_x=u^{(2)}_x$ on $\{m^{(1)}>0\}=\{m^{(2)}>0\}$.
\end{proof}
\section{Local behavior near the singularity} \label{sec.localbehavior}

The aim of this section is now to understand the behavior, as $t \to 0$, of the solution  $m(t)$ built previously. First, we consider   the general case ($\theta>0$) and we prove the convergence to the self-similar profile, as well as uniqueness in the critical case $\theta=2$. Next, we will focus on the case $\theta>2$: we quantify the rate of convergence and we use this property to infer the uniqueness result, in a suitable class of solutions, for this super-critical range.

\subsection{Convergence to the self-similar profile} 

Throughout this section, we consider the solution $(u,m)$ which was built in Proposition \ref{prop:existence} as a subsequential limit of solutions  $(u^{(\vep)}, m^{(\vep)})$ of \rife{eq.planning_eps}. Our first goal will be to show the following result.

\begin{thm} \label{prop.cvat0} As $t\to 0^+$, the map $x\to t^\alpha m(t, t^\alpha x)$ converges, in the $L^\infty-$weak-$*$ sense, to the self-similar profile $\phi$. 
\end{thm}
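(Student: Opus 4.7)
The plan is to reduce the convergence statement to a long-time convergence problem in self-similarity variables and exploit the Lyapunov functional from \cite{Munoz3}.

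First I would introduce the continuous rescaling
\[
s := -\ln t, \qquad \mu(s,y) := t^{\alpha}\, m(t, t^{\alpha} y), \qquad w(s,y) := t^{1-2\alpha}\, u(t,t^{\alpha} y),
\]
chosen so that the explicit self-similar solution recalled in the introduction becomes the \emph{stationary} pair $(w^{\star}(y),\phi(y))$ with $w^{\star}(y) = -\alpha y^{2}/2 - C$. A direct change of variables turns \eqref{eq.planning} into a modified MFG system for $(w,\mu)$ on $(s_{0},\infty)\times\R$, in which extra drift terms of the form $\alpha y \cdot \partial_{y}$ and a zeroth-order scaling term for $w$ appear; stationary solutions of this rescaled system with unit mass are exactly $(w^{\star},\phi)$ (up to an additive constant in $w$). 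Equivalently, the whole analysis can be phrased in terms of the rescaled Lagrangian flow $\Gamma(s,y) := t^{-\alpha}\tilde\gamma(t,y)$, which converges as $s\to\infty$ iff $\mu(s,\cdot)\to\phi$.

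Second I would transfer the estimates of Section~\ref{sec.exists} to the new variables. Corollary~\ref{sharp_support}, Lemmas~\ref{sharp_upper}--\ref{sharp_lower}, and \eqref{m.bound.infty} give that, uniformly in $s\geq s_{0}$, $\mu(s,\cdot)$ is supported in a fixed bounded interval, bounded above in $L^{\infty}$, and bounded below on its support away from the free boundary in a controlled way; the corresponding Lagrangian $\Gamma_{y}$ is pinched between two positive constants. Likewise, \eqref{oscu bd} and \eqref{bound.ux} bound $w(s,\cdot)$ (after normalizing by an $s$-dependent constant) and $w_{y}(s,\cdot)$ uniformly on the support of $\mu$. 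These uniform bounds supply the compactness needed to take subsequential limits $s_{n}\to\infty$.

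Third, I would import from \cite{Munoz3} the Lyapunov functional $\mathcal{L}[\mu]$ (of relative free-energy/displacement-convexity type, involving $\int F(\mu)\,dy$ and the quadratic moment $\int \tfrac12 \alpha y^{2}\mu\,dy$, normalized so that $\phi$ is its unique minimizer at fixed mass~$1$) and compute $\frac{d}{ds}\mathcal{L}[\mu(s)]$ along the rescaled MFG flow. Integration by parts, using the rescaled HJ and continuity equations, should produce a dissipation identity of the form
\[
\frac{d}{ds}\mathcal{L}[\mu(s)] \;=\; -\,\mathcal{D}[\mu(s),w(s)] \;\leq\; 0,
\]
with $\mathcal{D}$ a non-negative quadratic quantity that vanishes only when $(w,\mu)=(w^{\star},\phi)$. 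Since $\mathcal{L}[\mu(s)]$ is also bounded below by the estimates of the previous step, the dissipation is integrable on $(s_{0},\infty)$, so $\mathcal{D}[\mu(s_{n}),w(s_{n})]\to 0$ along some sequence $s_{n}\to\infty$.

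Finally, a LaSalle-invariance argument concludes: by the compactness established above, along any $s_{n}\to\infty$ we may extract a further subsequence so that $(w(s_{n}+\cdot),\mu(s_{n}+\cdot))$ converges to a rescaled-MFG solution with vanishing dissipation, hence identically equal to $(w^{\star},\phi)$; uniqueness of the limit upgrades subsequential convergence to $\mu(s,\cdot)\to\phi$ as $s\to\infty$. The uniform $L^{\infty}$ bound on $\mu(s,\cdot)$ then promotes narrow convergence to $L^{\infty}$-weak-$*$ convergence.

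\textbf{Main obstacle.} The delicate point is making the Lyapunov computation rigorous in the present setting: the support of $\mu(s,\cdot)$ is a time-dependent interval with only Hölder regularity, $w$ is only $C^{1}$ on the support and, for $\theta\geq 2$, the function $u$ blows up as $t\to 0$ so that the additive normalization of $w$ must be handled carefully, and the integration by parts produces boundary contributions on the free boundary that must be shown to vanish or to contribute with a favourable sign. Adapting the computation of \cite{Munoz3} from the long-time, smooth regime to this singular short-time, free-boundary regime is the step that will require the most care.
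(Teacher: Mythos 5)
Your overall strategy (self-similar rescaling, Lyapunov functional, LaSalle argument) is the same as the paper's, but the specific Lyapunov functional you propose is not the one that works, and your plan omits the borderline case $\theta=2$.

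\textbf{The Lyapunov functional.} You describe $\mathcal{L}$ as a ``relative free-energy/displacement-convexity type'' functional involving $\int F(\mu)$ and a quadratic moment, with $\phi$ as unique minimizer. That is the right object for a gradient flow (as for the porous medium equation), but the planning system is \emph{not} a gradient flow and this $\mathcal{L}$ is not monotone along the rescaled MFG dynamics: there is no relation of the form $w_\eta = -\Psi'(\mu)-\alpha\eta$ between the drift and the density, because $w$ solves a backward Hamilton--Jacobi equation, not a variational slope identity. The functional the paper actually uses (following \cite{Munoz3}) is the rescaled \emph{MFG Hamiltonian}
\[
\mathcal H(\tau) = \int_\R \Bigl( \tfrac12 \mu|w_\eta|^2 - \tfrac{\mu^{\theta+1}}{\theta+1} - \tfrac{\alpha(1-\alpha)}{2}\,\eta^2\mu \Bigr)\,d\eta + \text{const},
\]
which contains the \emph{kinetic} term $\tfrac12\int\mu|w_\eta|^2$ together with the potential terms (with the opposite sign). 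Its derivative along the rescaled flow is $\mathcal H'(\tau) = -(2\alpha-1)\int\mu|w_\eta|^2$; it is monotone and bounded, hence has a limit as $\tau\to-\infty$. Without the kinetic term you would not get a monotone quantity.

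\textbf{The case $\theta=2$.} Because $\mathcal H'(\tau)=-(2\alpha-1)\int\mu|w_\eta|^2$, the sign of the derivative reverses at $\theta=2$ and, worse, vanishes identically there: $\mathcal H$ is constant and your dissipation argument collapses. The paper treats $\theta=2$ separately with a distinct Lasry--Lions quantity $f(\tau)=\int_\R(\mu-\phi)w$ whose monotonicity follows from the sign of $\mu^2 - \tfrac{\alpha(1-\alpha)}{2}(R_\alpha^2-\eta^2)$ and gives the needed dissipation bound. Your plan asserts $\frac{d}{ds}\mathcal L\le 0$ with a nonnegative dissipation uniformly in $\theta$, which is not available.

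\textbf{Characterization of the limit.} Vanishing of the dissipation only gives $\int\mu|w_\eta|^2\to 0$ along time-averages, i.e.\ $w_\eta\to 0$ in a weighted $L^2$ sense on the support of $\mu$, which does not directly say ``$(w,\mu)=(w^\star,\phi)$''. The paper passes to Lagrangian coordinates $\hat\gamma(\tau,y)=t^{-\alpha}\tilde\gamma(t,y)$ and uses the elliptic PDE satisfied by $\hat\gamma$, the uniform $C^{2,\beta}$ estimates, and the fact that $\hat\gamma_\tau(\tau,y)=-w_\eta(\tau,\hat\gamma(\tau,y))$ to identify subsequential limits as solutions of a stationary ODE; uniqueness of the ODE solution with the correct normalizations (using $\int\phi=1$) then forces $\hat\gamma(\tau,\cdot)\to \mathrm{id}$, hence $\mu\to\phi$. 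That ODE uniqueness step is essential and does not follow from abstract LaSalle invariance alone.
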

The proof will be achieved after several steps. Following \cite{Munoz3} we use a continuous rescaling by setting 
$$
t =e^\tau, \; x= t^\alpha\eta,
$$
making the change of variables (for $\tau\in (-\infty, \ln(T))$ and $\eta\in \R$)
$$
\mu(\tau, \eta)= t^\alpha m(t, x), \;
\quad v(\tau,\eta)= t^{1-2\alpha} u(t,x)$$
and defining
$$
w(\tau, \eta)= v(\tau,\eta)+\frac{\alpha}{2} \eta^2.
$$
Then $(w,\mu)$ solves 
\be\label{eq.wmu}
 \begin{cases}
\displaystyle -w_\tau +\frac12 |w_\eta|^2 = \mu^\theta+ \frac{\alpha(1-\alpha)}{2} \eta^2 +(2\alpha-1) w & \qquad \text{in}\; (-\infty,\ln(T))\times \R\\
\displaystyle \mu_\tau -(\mu w_\eta)_\eta=0 & \qquad \text{in}\; (-\infty,\ln(T))\times \R,
\end{cases}
\ee
In what follows, we shorten our notation setting  $w(\tau,\cdot):=w(\tau)$ and $u(t,\cdot):=u(t)$. 
\begin{prop} Let $R>0$. There exists a constant $C_R>0$ such that, for $\tau \in (-\infty,\ln(T)]$,
\be \label{osc w bd} \osc_{[-R,R]}(w(\tau))\leq C_R. \ee
\end{prop}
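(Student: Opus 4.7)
The plan is to reduce the oscillation of $w$ on $[-R,R]$ to a harmless quadratic contribution plus the oscillation of $v$, and then to unravel the self-similar rescaling. Since $w(\tau,\eta)=v(\tau,\eta)+\tfrac{\alpha}{2}\eta^{2}$ and $\osc_{[-R,R]}\tfrac{\alpha}{2}\eta^{2}=\tfrac{\alpha}{2}R^{2}$, it is enough to prove $\osc_{[-R,R]}v(\tau,\cdot)\leq C_{R}$. Oscillation is insensitive to the additive constant $u(0,0)$ appearing in the definition of $v$ for $\theta\geq 2$, so the change of variables $t=e^{\tau}$, $x=t^{\alpha}\eta$ yields
\[
\osc_{[-R,R]}v(\tau,\cdot) \,=\, t^{1-2\alpha}\,\osc_{[-Rt^{\alpha},Rt^{\alpha}]}u(t,\cdot).
\]
Thus the whole matter is to show the sharp estimate
\[
\osc_{[-Rt^{\alpha},Rt^{\alpha}]}u(t,\cdot) \,\leq\, C_{R}\,t^{2\alpha-1}, \qquad t\in(0,T].
\]

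For $t\in(0,T/2]$ this follows directly from the sharp gradient bound \eqref{bound.ux} with $\delta=T/2$, which yields $\|u_{x}(t,\cdot)\|_{L^{\infty}(\R)}\leq C\,t^{\alpha-1}$; integrating over an interval of length $2Rt^{\alpha}$ gives an oscillation bounded by $2RC\,t^{2\alpha-1}$, exactly of the required order, and after multiplying by $t^{1-2\alpha}$ one obtains a constant depending only on $R$ and the data. For the compact range $t\in[T/2,T]$ the factors $t^{\alpha}$, $t^{1-2\alpha}$ are bounded away from $0$ and $\infty$, the window $[-Rt^{\alpha},Rt^{\alpha}]$ stays in a fixed compact set, and $u$ is continuous there by Proposition \ref{prop:existence} together with the explicit extension of $u$ outside $\{m>0\}$ supplied by Lemma \ref{lem: extension}; hence the oscillation is bounded by a constant depending only on $R$ and the data.

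The combination of the two regimes gives the desired $\osc_{[-R,R]}w(\tau,\cdot)\leq C_{R}$ for all $\tau\in(-\infty,\ln T]$. The only delicate point is that for large $R$ the interval $[-Rt^{\alpha},Rt^{\alpha}]$ may extend beyond the support of $m(t)$, and the Hamilton--Jacobi equation leaves $u$ non-unique outside $\{m>0\}$; so it is essential that the gradient bound \eqref{bound.ux} is formulated on all of $\R$ and not only on the support, which is exactly what the extension in Lemma \ref{lem: extension} provides. No use of the rescaled system \eqref{eq.wmu} itself is needed here: the estimate is a direct consequence of the sharp gradient bound for $u$ together with the self-similar scaling.
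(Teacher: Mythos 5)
Your proof is correct and follows essentially the same strategy as the paper's. The core fact is $\osc_{[-Rt^{\alpha},\,Rt^{\alpha}]}u(t,\cdot)\leq C_{R}\,t^{2\alpha-1}$; you obtain it by integrating the global gradient bound \eqref{bound.ux} over an interval of length $2Rt^{\alpha}$, whereas the paper combines the support oscillation bound \eqref{oscu bd} with the exterior gradient bound \eqref{grad bd exterior} and the support size from \eqref{bound.tildegammay}. Since \eqref{bound.ux} is precisely the synthesis of \eqref{grad bd ep} (on the support) and \eqref{grad bd exterior} (outside it), the two routes rest on the same underlying sharp $t^{\alpha-1}$ gradient estimate, and your version is a bit more compact. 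One minor imprecision: \eqref{bound.ux} is stated only for $t\in(0,T-\delta]$, so you correctly split off the range $t\in[T/2,T]$, but your justification there — continuity of $u$ up to $t=T$ — does not quite follow from Proposition \ref{prop:existence}, which only asserts $u\in C^{1}((0,T)\times\R)$. The cleanest way to close this is to invoke the whole-interval oscillation bounds in Lemma \ref{lem: osc u} (the $[t_{0},T]$ versions) together with \eqref{grad bd exterior}, exactly as the paper's citation of \eqref{oscu bd} does implicitly; this is a presentational detail, not a flaw in the approach.
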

\begin{proof}
We write $t=e^{\tau}$. From \eqref{oscu bd}, we have 
\be \osc_{m(t)>0}(u(t)) \leq C t^{2\alpha-1}.\ee
But then, from \eqref{bound.tildegammay} and \eqref{grad bd exterior}, we get
\be  \osc_{[-Rt^{\alpha},R t^{\alpha}]}(u(t)) \leq C(1+R) t^{2\alpha-1}, \ee
which implies the result.
\end{proof}
\vskip1em
Following \cite{Munoz3}, we now introduce the functional 
\be\label{lyapu}
\mathcal H(\tau) = \int_\R \left( \frac12 \mu(\tau)|w_\eta(\tau)|^2 -\left(\frac{\mu^{\theta+1}(\tau)}{\theta+1} + \frac{\alpha(1-\alpha)}{2} \eta^2\mu(\tau) \right) \right) d\eta - \frac{\theta}{\theta+1}\int_\R\phi^{\theta+1} + \frac{\alpha(1-\alpha)}{2} R_\alpha^2. 
\ee
We note for later use that, as $\phi^\theta(\eta)= \frac{\alpha(1-\alpha)}{2}(R_\alpha^2-\eta^2)_+$,  
\be\label{accaposi}
\begin{split}
\mathcal H(\tau) = \int_\R   \left( \frac12 \mu(\tau)|w_\eta(\tau)|^2 - \right. & \left. \left(\frac{\mu^{\theta+1}(\tau)}{\theta+1} - \frac{\phi^{\theta+1}}{\theta+1}   - \phi^\theta(\eta)(\mu(\tau)-\phi) \right) \right) d\eta \\
& \qquad\qquad\qquad\qquad + \int_{|\eta|\geq R_\alpha}  \frac{\alpha(1-\alpha)}{2}(R_\alpha^2- \eta^2)\mu(\tau)d\eta. 
\end{split}
\ee
Hence \begin{equation} \label{eq:refsug2}
\mathcal H(\tau) \leq \int_\R  \frac12 \mu(\tau)|w_\eta(\tau)|^2.   
\end{equation}
Moreover (this is exactly the computation in \cite[Lem. 3.7]{Munoz3}), 
\be\label{eq.mathcalHprime}
\frac{d}{d\tau } \mathcal H(\tau) = -(2\alpha-1)  \int  \mu|w_\eta|^2\, d\eta\,,
\ee
so that $\tau\to \mathcal H(\tau)$ is monotone. We now show that $\mathcal H$ is bounded. By Corollary \ref{sharp_support} and  \eqref{esti_infty}, the term 
$$
\int_\R \eta^2\mu(\tau, \eta)  d\eta = \int_\R t^{-2\alpha}x^2 m(t,x)  dx 
$$
is  bounded. We also have 
$$
\int \mu^{\theta+1}d\eta =  t^{\alpha(\theta+1)} \int  m^{\theta+1}(t, t^\alpha \eta) d\eta= t^{\alpha\theta} \int m^{\theta+1} (t, x)dx 
$$ 
which is bounded by \eqref{esti.mtheta+1}. In the same way 
\begin{align*}
\int \mu |w_\eta|^2 d\eta& = \int \mu(\tau,\eta) \left| t^{1-\alpha} u_x(t, t^\alpha\eta) + \alpha \eta\right|^2 d\eta \\ 
& \leq 2  t^{2-2\alpha} \int \mu(\tau,\eta) |u_x(t,t^\alpha\eta)|^2 d\eta + 2 \alpha^2  \int \eta^2 \mu(\tau, \eta)d\eta  \\
& \leq 2 t^{2-2\alpha} \int m(t,x) |u_x(t,x)|^2 dx + 2 \alpha^2  \int \eta^2 \mu(\tau, \eta)d\eta
\end{align*}
which is bounded as well (using \rife{bound.ux} for the first term). 
Thus $\mathcal H$, being monotone and bounded, has a limit as $\tau\to -\infty$. 

Assume that $\theta \neq 2$. Then, $2\alpha-1\neq 0$, so that, from \eqref{eq.mathcalHprime}, we obtain
\be\label{lizsedljfcv}
\lim_{\tau_0\to -\infty} \int_{\tau_0}^{\tau_0+1} \int_\R \mu|w_\eta|^2d\eta = 0. 
\ee

Let $\tau_n\to -\infty$, and set $\mu_n(\tau, \eta) := \mu(\tau+\tau_n,\eta)$ for $\tau \in (-\infty, \ln(T)-\tau_n)$. We know by \eqref{m.bound.infty}  that $\mu_n$ is bounded on $(-\infty, \ln(T)-\tau_n)\times \R$.
As a consequence, $\mu_n$ converges in $L^\infty-$weak-$*$ to some $\bar \mu$ along a subsequence that we denote in the same way and that we fix from now on. Our goal is to check that $\bar \mu(\tau,\eta)$ is actually independent of time and equals $\phi(\eta)$ for all $(\tau,\eta)$, which will prove the result. Note that, by the equation satisfied by $\mu_n$ and \eqref{lizsedljfcv},  $\bar \mu$ is independent of $\tau$.  \\

We come back to the optimal trajectories and we set 
\begin{equation} \label{eq:gamma hat defi}
    \hat \gamma(\tau,y)= t^{-\alpha} \tilde \gamma(t,y), 
\end{equation}

so that 
$$
\tilde \gamma(t,y)= t^{\alpha} \hat \gamma(\ln(t), y).
$$
Hence
$$
\tilde \gamma_t(t,y)= \alpha t^{\alpha-1} \hat \gamma(\ln(t), y) + t^{\alpha-1}  \hat \gamma_\tau(\ln(t), y)
$$
and 
$$
\tilde \gamma_{tt}(t,y)= \alpha(\alpha-1) t^{\alpha-2} \hat \gamma(\ln(t), y) +(2\alpha-1) t^{\alpha-2} \hat \gamma_\tau (\ln(t), y) +  t^{\alpha-2}  \hat \gamma_{\tau\tau}(\ln(t), y)
$$
Recalling the equation \eqref{ilauekjzred1} satisfied by $\tilde \gamma$, we get: 
\be\label{eq.hatgamma}
\alpha(\alpha-1)  \hat \gamma +(2\alpha-1)  \hat \gamma_\tau  +    \hat \gamma_{\tau\tau}
+ \frac{\theta \phi^\theta}{(\hat \gamma_y)^{2+\theta}}\hat  \gamma_{yy}  = 
 \frac{(\phi^\theta)_y}{(\hat \gamma_y)^{\theta+1}}\qquad \text{in} \;  (-\infty, \ln(T))\times (-R_\alpha,R_\alpha),
\ee
because $\alpha-2= -(\theta+1)\alpha$. Note that $\hat \gamma(\tau,y)=y$ is a solution. The goal is now to show that $\hat\gamma(\tau_n,\cdot)$ converges to $y$ as $\tau\to -\infty$. Note that, by \eqref{bound.tildegammay}, we have 
\be\label{bound.tildegammay2}
|\hat\gamma(\tau,y) |\leq C, \qquad C^{-1} \leq \hat\gamma_y(\tau,y)\leq C.
\ee


Next we claim: 
\begin{lem}  We have
$$
\lim_{\tau_0\to -\infty} \int_{\tau_0}^{\tau_0+1}\int_{-R_\alpha}^{R_\alpha} |\hat\gamma_\tau(\tau, y)|^2\phi(y)dyd\tau=0.
$$
\end{lem}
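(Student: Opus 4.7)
The key identity to establish first is that, under the rescaling, the time derivative of the Lagrangian flow is nothing but $-w_\eta$ evaluated along the flow. Differentiating $\tilde\gamma(t,y)=t^\alpha\hat\gamma(\ln t,y)$ gives
\[
\tilde\gamma_t(t,y)=t^{\alpha-1}\bigl(\alpha\,\hat\gamma+\hat\gamma_\tau\bigr).
\]
Using the characterization $\tilde\gamma_t=-u_x(t,\tilde\gamma)$ from \eqref{ilauekjzred3} and the definition $w(\tau,\eta)=v(\tau,\eta)+\tfrac{\alpha}{2}\eta^2$ with $v_\eta=t^{1-\alpha}u_x(t,t^\alpha\eta)$, one finds
\[
w_\eta(\tau,\eta)=t^{1-\alpha}u_x(t,t^\alpha\eta)+\alpha\eta,
\]
and hence, evaluating at $\eta=\hat\gamma(\tau,y)$,
\[
\hat\gamma_\tau(\tau,y)=-w_\eta\bigl(\tau,\hat\gamma(\tau,y)\bigr).
\]

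The second ingredient is the change of variables in the inner integral. From \eqref{ilauekjzred2} together with the definitions of $\hat\gamma$ and $\mu$, one gets
\[
\hat\gamma_y(\tau,y)=\frac{\phi(y)}{\mu(\tau,\hat\gamma(\tau,y))},\qquad \text{i.e.}\quad \phi(y)\,dy=\mu(\tau,\eta)\,d\eta
\]
along the bijection $y\in[-R_\alpha,R_\alpha]\mapsto\eta=\hat\gamma(\tau,y)$, whose image is exactly the (compact) support of $\mu(\tau,\cdot)$ in view of \eqref{bound.tildegammay2}.

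Combining these two facts, for any $\tau_0$,
\[
\int_{\tau_0}^{\tau_0+1}\!\!\int_{-R_\alpha}^{R_\alpha}|\hat\gamma_\tau(\tau,y)|^2\phi(y)\,dy\,d\tau
=\int_{\tau_0}^{\tau_0+1}\!\!\int_{-R_\alpha}^{R_\alpha} w_\eta^2\bigl(\tau,\hat\gamma(\tau,y)\bigr)\phi(y)\,dy\,d\tau
=\int_{\tau_0}^{\tau_0+1}\!\!\int_{\R}\mu\,|w_\eta|^2\,d\eta\,d\tau.
\]
The conclusion follows at once from \eqref{lizsedljfcv}, which is available here since the lemma is stated in the present subsection under the standing assumption $\theta\neq 2$.

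There is no real obstacle; the statement is essentially a reformulation of \eqref{lizsedljfcv} in Lagrangian coordinates. The only point to handle with some care is the identification $\hat\gamma_\tau(\tau,y)=-w_\eta(\tau,\hat\gamma(\tau,y))$, which is a direct consequence of the scaling, and the verification that the change of variables maps $(-R_\alpha,R_\alpha)$ onto the full support of $\mu(\tau,\cdot)$ so that no contribution of $\mu|w_\eta|^2$ is lost.
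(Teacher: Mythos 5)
Your proof is correct and follows exactly the same route as the paper: establish the identity $\hat\gamma_\tau(\tau,y)=-w_\eta(\tau,\hat\gamma(\tau,y))$ via the scaling and $\tilde\gamma_t=-u_x(t,\tilde\gamma)$, then change variables using $\phi(y)\,dy=\mu\,d\eta$ to rewrite $\int\mu|w_\eta|^2$ as the Lagrangian integral, and invoke \eqref{lizsedljfcv}. No discrepancies with the paper's argument.
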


\begin{proof} 
%
%
Recalling that $\gamma_t(t,x)= -u_x(t, \gamma(t,x))$, we get $\tilde \gamma_t(t,y)= -u_x(t, \tilde \gamma(t,y))$. On the other hand 
\begin{align*}
\tilde \gamma_t(t,y) & = \alpha t^{\alpha-1} \hat \gamma(\ln(t), y) + t^{\alpha-1}  \hat \gamma_\tau(\ln(t), y)
\\ & = t^{\alpha-1} \left( \alpha \hat \gamma(\tau, y)+\hat \gamma_\tau(\tau, y) \right) 
\end{align*}
while 
$$
u_x(t, x)= t^{2\alpha-1} (v(\ln(t), t^{-\alpha} x)))_x = t^{\alpha-1} v_\eta(\ln(t), t^{-\alpha} x)
$$
and thus 
$$
u_x(t, \tilde \gamma(t,y))= t^{\alpha-1} v_\eta(\ln(t), t^{-\alpha}  \tilde \gamma(t,y))= t^{\alpha-1} v_\eta(\tau,\hat\gamma(\tau, y)).
$$
Hence we deduce that 
$$
 \alpha \hat \gamma(\tau, y) +  \hat \gamma_\tau(\tau, y)=  -v_\eta(\tau,\hat\gamma(\tau, y)),
 $$
 and thus 
 \begin{equation} \label{eq:refsug3}
      \hat \gamma_\tau(\tau, y) = - w_\eta(\tau,\hat\gamma(\tau, y)).
 \end{equation}
  Therefore, recalling \rife{ilauekjzred2} and the definition of $\hat \gamma$, we get
  \begin{align*}
0  &=\lim_{\tau_0\to -\infty} \int_{\tau_0}^{\tau_0+1}\int_\R |w_\eta|^2\mu = 
\lim_{\tau_0\to -\infty} \int_{\tau_0}^{\tau_0+1}\int_{-R_\alpha}^{R_\alpha} |w_\eta(\tau, \hat\gamma(\tau,y))|^2\phi(y)dyd\tau \\
& = \lim_{\tau_0\to -\infty} \int_{\tau_0}^{\tau_0+1}\int_{-R_\alpha}^{R_\alpha} |\hat \gamma_\tau(\tau, y)|^2\phi(y)dyd\tau.
\end{align*}
  \end{proof}

As $\hat\gamma(\tau_n, \cdot)$ is increasing and  bounded, it converges up to a subsequence (denoted in the same way) in $L^1_{\operatorname{loc}}((-R_\alpha,R_\alpha))$ and a.e. to some limit $\xi$ which is also increasing. 

\begin{lem} \label{lem:identify eq} We have the equality $\bar \mu= \xi_\sharp \phi$ and $\xi$ is a classical solution to 
\be\label{lizkejsdf}
\frac{\alpha(\alpha-1)}{2} (\xi^2)_y -\left( \frac{ \phi^\theta}{(\xi_y)^{\theta}}\right)_y  = 0 \qquad \text{in} \;  (-R_\alpha,R_\alpha),
\ee
\end{lem}

\begin{proof} For any test function $f\in C^0_c(\R)$, we have
$$
\int_\R f(\eta)\bar \mu(\eta)d\eta= \lim_{n\to \infty} \int_\R f(\eta) \mu_n(\tau, \eta)d\eta = \lim_{n\to \infty} \int_{-R_\alpha}^{R_\alpha} f(\hat \gamma(\tau_n, y))\phi(y)dy = \int_{-R_\alpha}^{R_\alpha} f(\xi( y))\phi(y)dy.
$$
As we are in 1D and $\xi$ is increasing, the relation $\bar \mu= \xi_\sharp \phi$ implies that $\xi$ is the unique optimal transport from $\phi$ to $\bar \mu$ (see e.g. \cite[Theorem 2.5]{Sa}). 
Therefore the whole limit $\hat \gamma_n:= \hat\gamma(\tau_n+\cdot, \cdot)$ converges locally uniformly to $\xi$. Note that, for any $\delta>0$, $\hat \gamma$  and $\hat \gamma_{\tau}$ are bounded, and  $\hat \gamma_y$ is bounded from above and below in $(-\infty, \ln(T-\delta))\times (-R_\alpha, R_\alpha)$. Thus by the uniformly elliptic equation satisfied by $\hat \gamma$, we have $C^{2+\beta}$ bounds for $\hat \gamma$, these bounds being local in space but global in time. Therefore  $(\hat \gamma_n)_{\tau}$ and $(\hat \gamma_n)_{\tau \tau}$ tend locally uniformly to $0$ and, recalling \eqref{eq.hatgamma}, the limit $\xi$ of $\hat \gamma_n$ is a classical solution to 
$$
0= \alpha(\alpha-1) \xi
+ \frac{\theta \phi^\theta}{(\xi_y)^{2+\theta}}\xi_{yy}  -
 \frac{(\phi^\theta)_y}{(\xi_y)^{\theta+1}}=  \alpha(\alpha-1) \xi -\frac{1}{\xi_y}\left( \frac{ \phi^\theta}{(\xi_y)^{\theta}}\right)_y \qquad \text{in} \;   (-R_\alpha,R_\alpha),
$$
which can be rewritten as  \eqref{lizkejsdf}.
\end{proof}

We now prove the key statement: 
\begin{lem} \label{lem:flow lim identify} We have $\xi(y)=y$ in $[-R_\alpha, R_\alpha]$. 
\end{lem}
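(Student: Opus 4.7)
The plan is to integrate the ODE \eqref{lizkejsdf} once, identify the resulting first integral explicitly using the behaviour of $\phi$ at the endpoints and the two-sided bound on $\xi_y$, and then conclude with a mass-conservation argument.

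First, integrating \eqref{lizkejsdf} once in $y$, there exists a constant $C\in\R$ such that
\begin{equation*}
\frac{\phi^\theta(y)}{(\xi_y(y))^\theta} + \frac{\alpha(1-\alpha)}{2}\,\xi(y)^2 \;=\; -C \qquad \text{in } (-R_\alpha, R_\alpha).
\end{equation*}
Since $\phi(\pm R_\alpha)=0$ and $C^{-1}\leq \xi_y\leq C$ by \eqref{bound.tildegammay2}, the first term tends to $0$ as $y\to \pm R_\alpha$. Hence $\xi(R_\alpha)^2=\xi(-R_\alpha)^2$, and the strict monotonicity of $\xi$ yields $\xi(-R_\alpha)=-L$ and $\xi(R_\alpha)=L$ for some $L>0$, with $-C=\tfrac{\alpha(1-\alpha)}{2}L^2$. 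Substituting the explicit form $\phi^\theta(y)=\tfrac{\alpha(1-\alpha)}{2}(R_\alpha^2-y^2)$, the relation becomes
\begin{equation*}
(\xi_y)^\theta \bigl(L^2-\xi^2\bigr) \;=\; R_\alpha^2 - y^2 \qquad \text{in }(-R_\alpha, R_\alpha).
\end{equation*}

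Next, I would translate this back into information on the limit density. From $\bar\mu = \xi_\sharp \phi$ one has $\bar\mu(\xi(y))\,\xi_y(y) = \phi(y)$, and combining this with the displayed ODE gives
\begin{equation*}
\bar\mu(\eta) \;=\; \left(\frac{\alpha(1-\alpha)}{2}\right)^{\!1/\theta}\!\bigl(L^2-\eta^2\bigr)_+^{1/\theta}\qquad \eta\in [-L,L],
\end{equation*}
so that $\bar\mu$ is a Barenblatt-type profile with scale parameter $L$ in place of $R_\alpha$.

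Finally, I would invoke conservation of mass. Each $\mu(\tau,\cdot)$ is a probability density supported in a fixed interval by Corollary \ref{sharp_support}, so the weak-$*$ limit satisfies $\int_\R \bar\mu = 1 = \int_\R \phi$. But
\begin{equation*}
\int_{-L}^{L}\!\left(\frac{\alpha(1-\alpha)}{2}\right)^{\!1/\theta}\!(L^2-\eta^2)^{1/\theta}\,d\eta \;=\; \left(\frac{\alpha(1-\alpha)}{2}\right)^{\!1/\theta}\!L^{1+2/\theta}\!\int_{-1}^{1}(1-s^2)^{1/\theta}\,ds,
\end{equation*}
and this is a strictly increasing function of $L$. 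Since the value $L=R_\alpha$ realises the mass $1$ (by the very definition of $R_\alpha$), we conclude $L=R_\alpha$, hence $\bar\mu = \phi$. Because $\xi$ is an increasing bijection from $[-R_\alpha,R_\alpha]$ onto itself satisfying $\xi_\sharp\phi=\phi$, monotone-rearrangement uniqueness of the optimal transport in dimension one forces $\xi(y)=y$.

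The main obstacle I anticipate is justifying the passage to the limit at the endpoints: the equation \eqref{lizkejsdf} degenerates where $\phi$ vanishes, and the first-integral identity must be evaluated there using only the $L^\infty$ two-sided bound on $\xi_y$ rather than continuity of $\xi_y$ up to $\pm R_\alpha$. This is handled by taking a one-sided limit from the interior, where $\xi$ is a classical solution, and using $\phi(y)/\xi_y(y) \leq C\,\phi(y)\to 0$.
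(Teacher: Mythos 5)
Your proof is correct, and it starts in the same place as the paper — the first integral of the ODE \eqref{lizkejsdf}, followed by evaluating at $y=\pm R_\alpha$ using $\phi(\pm R_\alpha)=0$ together with the two-sided bound on $\xi_y$ — but it then takes a genuinely different route to pin down the scale. The paper rewrites the first integral as the conjugacy $(c_1)^{1/\theta}\phi(c_1^{-1/2}\xi(y))\,\xi_y(y)=\phi(y)$, integrates once more to obtain $\Phi(c_1^{-1/2}\xi(y))$ in terms of $\Phi(y)$, and evaluates this antiderivative identity at the two endpoints to force $c_1=1$ and $c_2=0$. You instead push the first integral through the relation $\bar\mu(\xi(y))\xi_y(y)=\phi(y)$ to identify $\bar\mu$ as a Barenblatt profile of scale $L$, then use the total-mass normalization $\int\bar\mu=1$ (justified by the uniform compact support and $L^\infty$-weak-$*$ convergence) together with strict monotonicity of mass in $L$ to get $L=R_\alpha$, and finish by monotone-rearrangement uniqueness. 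Both arguments are, at bottom, mass-conservation arguments — integrating the paper's conjugacy over $[-R_\alpha,R_\alpha]$ gives $(c_1)^{1/\theta+1/2}=1$ directly — but your version makes the identification of $\bar\mu$ explicit, which is conceptually clean and leverages the already-established optimal-transport uniqueness from the preceding lemma, whereas the paper's antiderivative computation stays entirely at the ODE level and determines $c_1,c_2$ simultaneously from the two endpoint conditions. Your anticipated difficulty at the degenerate endpoints is handled exactly as you suggest, and indeed just as the paper does: the lower bound on $\xi_y$ inherited from \eqref{bound.tildegammay2} suffices.
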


\begin{proof} By \eqref{lizkejsdf} there is a positive constant $c_1$ such that 
$$
\frac{\alpha(\alpha-1)}{2} \xi^2(y)= \frac{ \phi^\theta(y)}{(\xi_y(y))^{\theta}} +c_1\frac{\alpha(\alpha-1)R_\alpha^2}{2}.
$$
This can be rewritten as 
$$
\frac{\alpha(1-\alpha)c_1}{2}\left(R_\alpha^2- \frac{\xi^2}{c_1}\right) = \frac{ \phi^\theta(y)}{(\xi_y(y))^{\theta}}\,.
$$
This equality implies that $|\xi|\leq R_\alpha c_1^{1/2}$ and,  by definition of $\phi$, it reads  as 
\be\label{oqsildjkcn}
(c_1)^{1/\theta} \phi( c_1^{-1/2}\xi (y)) = \frac{\phi(y)}{\xi_y(y)} . 
\ee
By the bound below in \eqref{bound.tildegammay2}, we have that $\xi_y$ is bounded below. Hence we can take $y=\pm R_\alpha$ in the equality above to infer that $\phi( c_1^{-1/2}\xi (\pm R_\alpha))=0$. This implies, as $\xi$ is increasing:
$$
 c_1^{-1/2}\xi (R_\alpha)= R_\alpha, \qquad  c_1^{-1/2}\xi (-R_\alpha)= -R_\alpha.
 $$
Let $\Phi$ be the antiderivative of $\phi$ which vanishes at $0$. Then integrating Equation \eqref{oqsildjkcn} yields the existence of a constant  $c_2$ such that
$$
(c_1)^{1/\theta+1/2} \Phi(c_1^{-1/2}\xi (y)) = \Phi(y) + c_2.
$$
Then, as $\Phi(R_\alpha)=-\Phi(-R_\alpha)=1/2$, we get
$$
\frac12 (c_1)^{1/\theta+1/2} = \frac12 +c_2\qquad \text{and}\qquad -\frac12 (c_1)^{1/\theta+1/2} = -\frac12 +c_2
$$
so that $c_2=0$ and $c_1=1$. Hence $\xi(y)= y$. 
\end{proof}

\begin{proof}[Proof of Theorem \ref{prop.cvat0}]  In the case $\theta \neq 2$, we have proved that, along the subsequence $\tau_n\to-\infty$, the map $\mu(\tau_n+\cdot, \cdot)$ converges in $L^\infty-$weak-$*$ to $\xi_\sharp \phi =\phi$. As the subsequence is arbitrary, this shows the whole convergence of $\mu$ to $\phi$ as $\tau\to-\infty$.

We now treat the critical case $\theta=2$, namely when $2\alpha-1=0$. For $\tau \in (-\infty,\ln(T))$, we set
\be f(\tau)= \intr (\mu-\phi)w.
\ee
We recall that, since $m(t) = \tilde \gamma(t, \cdot)_\sharp \phi $ and  \rife{bound.tildegammay} holds, then $\mu$ has compact support. Hence, in view of
  \eqref{osc w bd},  $f$ is bounded. The Lasry-Lions monotonicity argument, applied to \eqref{eq.wmu}, gives
\be f'(\tau)=-\intr \left( \frac12 (\mu+\phi)w_{\eta}^2 +\left(\mu^{2}-\frac{\alpha(1-\alpha)}{2}(R_{\alpha}^2-\eta^2)\right)(\mu-\phi)\right)\,.
\ee
Recalling the definition of $\phi$, we see that
\be \intr \left(\mu^{2}-\frac{\alpha(1-\alpha)}{2}(R_{\alpha}^2-\eta^2)\right)(\mu-\phi)\geq \intr \left(\mu^{2}-\phi^{2}\right)(\mu-\phi)=  \intr (\mu+\phi) (\mu-\phi)^2 .\ee 
In particular, $f$ is non-increasing. Thus, $f(\tau)$ has a limit as $\tau \to -\infty$. We then have, for every $\tau_0 \in (-\infty,\ln(T))$, $\lim_{\tau \to -\infty} f(\tau+\tau_0)-f(\tau+\tau_0-1)=0$, and thus
\be \lim_{\tau \to -\infty} \int_{\tau+\tau_0-1}^{\tau+\tau_0} \int_{\R}\left(\mu^{2}-\frac{\alpha(1-\alpha)}{2}(R_{\alpha}^2-\eta^2)\right)(\mu-\phi)=0. \ee
As in the case $\theta \neq 2$, this characterizes the limit as $\overline{\mu}=\phi$ (and in fact, as a pointwise almost everywhere limit).

\end{proof} 

 We can now show the well-posedness result for the critical case $\theta=2$. 
 
\begin{thm}\label{prop.unique2}  When $\theta=2$, there exists a unique solution $(u,m)$ to \eqref{eq.planning} such that 
\begin{equation} \label{eq:u as crit case}
\ \frac{u}{1+|x|}\in L^{\infty}_{\operatorname{loc}}((0,T];\R), \quad \int_{\R}mu_x^2 dx \in L^1_{\operatorname{loc}}((0,T)),   
\end{equation}
 \eqref{bound.ux} and \eqref{esti.mtheta+1 2} hold, and the convergence of Theorem \ref{prop.cvat0} holds in the ${\bf d_1}$ sense. That is, $m$ is unique, and $u_x$ is unique on the set $\{m>0\}$.
\end{thm}
\begin{proof}
 Existence was already shown in Proposition \ref{prop:existence}. Indeed, in view of \eqref{bound.tildegammay}, the support of $m(t,\cdot)$ is contained in an interval of the form $[-Ct^{\alpha},Ct^{\alpha}]$, where $C$ is a uniform constant. Therefore, the functions $x \mapsto t^{\alpha}m(t,xt^{\alpha})$ have a uniformly bounded support, and thus the $L^{\infty}$ weak-* convergence of Theorem \ref{prop.cvat0} implies ${\bf d_1}$ convergence.  The fact that $u/(1+|x|)$ is locally bounded away from $t=0$ follows exactly as in the proof of Theorem \ref{thm.exists}. 
 
 To show uniqueness, let $(w,\mu)$ and $(\tilde{w},\tilde{\mu})$ be the continuous rescalings of two solutions satisfying the assumptions of the theorem. By Lasry-Lions' argument, we have, for small $\delta>0$ and $\tau \in (-\infty,\ln(T-\delta))$,
\begin{multline} \label{kapsda th2} \int_{\tau}^{\ln(T-\delta)} \intr \left(\frac{1}{2}(\mu+\mutil)|w_{\eta}-\wtil_{\eta}|^2 + (\mu^{\theta}-\mutil^{\theta})(\mu-\mutil)\right)=\intr(\mu(\tau,\cdot)-\mutil(\tau,\cdot))(w(\tau,\cdot)-\wtil(\tau,\cdot)) \\- \intr(\mu(\ln(T-\delta),\cdot)-\mutil(\ln(T-\delta),\cdot))(w(\ln(T-\delta),\cdot)-\wtil(\ln(T-\delta),\cdot)). \end{multline}
To deal with the second term on the right hand side, we note first as in the proof of Theorem \ref{thm.exists} that the limit $w(\tau,\cdot)\xrightarrow{\tau \uparrow \ln(T)}w(\ln(T),\cdot)$ exists pointwise a.e., and $\mu(\tau)\rightharpoonup T^{\alpha}m_T(T^{\alpha}\eta )$ weakly in $L^{\theta+1}$ as $\tau \uparrow \ln(T)$. By \eqref{eq:u as crit case}, since $2\alpha-1=0$, we have
\begin{equation}
    |w(\tau,\eta)-\tilde{w}(\tau,\eta)|\leq C(1+|\eta|), \quad \tau \in [\ln (T/2),\ln (T)), \quad \eta\in \R.
\end{equation}
We thus conclude from the second moment bound of $m$ that the second term in the right hand side of \eqref{kapsda th2} tends to $0$ as $\delta \to 0$.
To conclude,  it is then enough to show that $\lim_{\tau \to -\infty}f(\tau)=0$, where
\be
f(\tau)= \intr (\mu(\tau,\cdot)-\mutil(\tau,\cdot))(w(\tau,\cdot)-\wtil(\tau,\cdot)).
\ee
Indeed, we have, by \eqref{bound.ux} and the convergence of $\mu$ and $\tilde{\mu}$,
\be |f(\tau)|\leq {\bf d_1}(\mu(\tau,\cdot),\tilde{\mu}(\tau,\cdot))(\|(w_{\eta}-\tilde w_{\eta})(\tau,\cdot)\|_{\infty}) \to 0\ee 
as $\tau \to -\infty$.
\end{proof}


\subsection{Convergence rate in the case $\theta>2$}\label{subsec_last}

 From now on we focus on the case $\theta>2$. As in the previous section, we still assume here that  $(u,m)$ is the particular solution constructed in Proposition \ref{prop:existence}. The goal is to find a convergence rate  as $t\to 0^+$ for the solution built in Section \ref{sec.exists}. This will allow us  to define a suitable class of solutions where  both existence and  uniqueness for \eqref{eq.planning} can be obtained.  
 
 As in the last section, we work in the continuous rescaling frame, where $\mu(\tau, \eta), w(\tau, \eta)$ were defined  above. According to what was proved before, we know that $\mu(\tau)$ has compact support and we recall that
 \be\label{pushmu}
  \mu(\tau)= \hat\gamma(\tau, \cdot)_\sharp \phi
 \ee
 where $\hat \gamma$ is defined in \rife{eq:gamma hat defi}. More precisely, \rife{pushmu} also gives
 \be\label{pushmu2}
\hat{\gamma}_y(\tau,y)\, \mu(\tau,\hat{\gamma}(\tau,y))=  \phi(y)\,, \qquad \tau\in (-\infty, \ln(T)), y\in (-R_\alpha, R_\alpha).
 \ee
 We first estimate the  growth rate for the Lyapunov functional $\mathcal{H}(\tau)$ defined in \rife{lyapu}. 

\begin{lem}\label{lem.estimathcalH}Set $\kappa= 1-2\alpha>0$. Then there exists a constant $C>0$ such that, for $\tau\leq 0$,  
$$
0\leq \mathcal H(\tau) \leq Ce^{2\kappa \tau}.
$$
\end{lem}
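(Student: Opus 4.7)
The plan is to combine the monotonicity identity \eqref{eq.mathcalHprime}, which in the present regime $\kappa = 1-2\alpha > 0$ reads $\mathcal{H}'(\tau) = \kappa \int_\R \mu|w_\eta|^2\, d\eta \geq 0$, with a Gr\"onwall-type argument applied to an auxiliary quantity. The starting observation is that $\mathcal{H}$ can be rewritten in terms of the free-energy functional
\[
G(\rho) := \frac{1}{\theta+1}\int_\R \rho^{\theta+1}\, d\eta + \frac{\alpha(1-\alpha)}{2}\int_\R \eta^2 \rho\, d\eta
\]
as $\mathcal{H}(\tau) = \frac{1}{2}\int \mu(\tau)|w_\eta(\tau)|^2\, d\eta - [G(\mu(\tau)) - G(\phi)]$. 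Indeed, the identity $\phi^\theta + \frac{\alpha(1-\alpha)}{2}\eta^2 = \frac{\alpha(1-\alpha)}{2}R_\alpha^2$ on $[-R_\alpha,R_\alpha]$ yields $G(\phi) = \frac{\alpha(1-\alpha)}{2}R_\alpha^2 - \frac{\theta}{\theta+1}\int \phi^{\theta+1}$, which matches the constants in \eqref{lyapu}, and simultaneously identifies $\phi$ as the unique minimizer of $G$ over probability densities. In particular $G(\mu(\tau)) - G(\phi) \geq 0$.

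For the upper bound, I would set $\mathcal{F}(\tau) := e^{-2\kappa\tau}\mathcal{H}(\tau)$, and compute
\[
\mathcal{F}'(\tau) = e^{-2\kappa\tau}\bigl[\mathcal{H}'(\tau) - 2\kappa \mathcal{H}(\tau)\bigr] = 2\kappa\, e^{-2\kappa\tau}\bigl[G(\mu(\tau)) - G(\phi)\bigr] \geq 0.
\]
Thus $\mathcal{F}$ is non-decreasing, and evaluating at some fixed reference time $\tau_0 \in (-\infty, \ln T)$ gives $\mathcal{F}(\tau) \leq \mathcal{F}(\tau_0)$ for $\tau \leq \tau_0$. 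Since $\mathcal{H}$ is bounded (as established just before \eqref{eq.mathcalHprime}), this yields $\mathcal{H}(\tau) \leq C e^{2\kappa\tau}$, and up to increasing $C$ the estimate extends to the full range $\tau \leq 0$ stated in the lemma.

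For the lower bound, the monotonicity of $\mathcal{H}$ reduces the claim to showing $\lim_{\tau\to -\infty}\mathcal{H}(\tau) = 0$. I would pick a sequence $\tau_n \to -\infty$ along which $\int \mu(\tau_n)|w_\eta(\tau_n)|^2\, d\eta \to 0$; this is possible by a Chebyshev-type selection inside each unit interval, since \eqref{lizsedljfcv} forces $\int_{\tau_n}^{\tau_n+1}\int\mu|w_\eta|^2 \to 0$. In parallel, Theorem \ref{prop.cvat0} together with the local $C^{2+\beta}$ bounds on $\hat\gamma$ established in its proof (and uniqueness of the limit) imply $\hat\gamma(\tau,\cdot)\to\mathrm{id}$ and $\hat\gamma_y(\tau,\cdot)\to 1$ locally uniformly. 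The Lagrangian identities
\[
\int \mu^{\theta+1}(\tau)\, d\eta = \int_{-R_\alpha}^{R_\alpha}\frac{\phi^{\theta+1}(y)}{\hat\gamma_y(\tau,y)^\theta}\, dy, \qquad \int \eta^2 \mu(\tau)\, d\eta = \int_{-R_\alpha}^{R_\alpha}\hat\gamma(\tau,y)^2 \phi(y)\, dy,
\]
combined with the two-sided bounds on $\hat\gamma_y$ from \eqref{bound.tildegammay2}, permit dominated convergence and yield $G(\mu(\tau_n)) \to G(\phi)$. Substituting into the rewriting of $\mathcal{H}$ gives $\mathcal{H}(\tau_n)\to 0$, and monotonicity of $\mathcal{H}$ propagates this to the full limit.

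The principal obstacle is the last step, namely upgrading the weak-$*$ convergence provided by Theorem \ref{prop.cvat0} to convergence of the nonlinear quantity $G(\mu(\tau))$. The Lagrangian representation $\mu=\hat\gamma(\tau,\cdot)_\sharp\phi$ together with the uniform $C^{2+\beta}$ regularity of $\hat\gamma$ are precisely what allow passage to the limit in $\int\mu^{\theta+1}$; without this structural input, one would only control the second moment. Everything else is a routine manipulation based on the identity $\mathcal{H}'(\tau) - 2\kappa\mathcal{H}(\tau) = 2\kappa[G(\mu(\tau)) - G(\phi)] \geq 0$.
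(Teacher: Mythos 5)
Your proof is correct and follows essentially the same route as the paper: the upper bound is the Gr\"onwall consequence of $\mathcal H'(\tau)=\kappa\int\mu|w_\eta|^2\geq 2\kappa\,\mathcal H(\tau)$ (your rewriting $\mathcal H = \frac12\int\mu|w_\eta|^2 - [G(\mu)-G(\phi)]$ just packages the same inequality \eqref{accaposi} used there), and the lower bound combines monotonicity of $\mathcal H$ with the convergence $\mu(\tau)\to\phi$ through the Lagrangian representation. Your version is slightly more explicit about why the nonlinear quantity $\int\mu^{\theta+1}$ converges — via $\hat\gamma_y\to 1$, the two-sided bounds \eqref{bound.tildegammay2}, and dominated convergence — but this is precisely the strong convergence that the paper's terse "where $\mu(\tau)$ converges to $\phi$" is implicitly invoking, so the argument is the same in substance.
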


\begin{proof} In view of \eqref{eq:refsug2} and \eqref{eq.mathcalHprime}, we get
 $
\mathcal H'(\tau) \geq 2\kappa \mathcal H(\tau), $
and thus, since $\tau \leq 0$, $\mathcal H(\tau) \leq e^{2\kappa\tau} \mathcal H(0).$

On the other hand, by \rife{accaposi}, we have
\begin{align*}
\mathcal H(\tau) & \geq  - \int_\R \left(\frac{\mu^{\theta+1}(\tau)}{\theta+1} - \frac{\phi^{\theta+1}}{\theta+1} - \phi^\theta(\eta)(\mu(\tau)-\phi)\right) d\eta -  \int_\R    \frac{\alpha(1-\alpha)}{2}(\eta^2-R_\alpha^2)_+\mu(\tau)d\eta,
\end{align*}
where $\mu(\tau)$ converges to $\phi$. Hence, recalling \eqref{eq:phi defi},
\begin{align*}
\lim_{\tau\to -\infty} \mathcal H(\tau) & \geq -  \int_\R    \frac{\alpha(1-\alpha)}{2}(\eta^2-R_\alpha^2)_+\phi\, d\eta =0.
\end{align*}
Since $\mathcal H $ is non-decreasing by \eqref{eq.mathcalHprime}, this shows that $\mathcal H\geq 0$.
\end{proof} 

 Next we estimate the convergence rate in the $2-$Wasserstein distance ${\bf d_2}$ (see \cite[Chap. 5]{Sa}).
\begin{lem}\label{lem.estid2mutau} We have
$$
{\bf d}_2(\mu(\tau), \phi) \leq C e^{\kappa \tau}.
$$
\end{lem}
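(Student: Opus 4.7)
The plan is to combine the exponential decay of $\mathcal{H}(\tau)$ from Lemma \ref{lem.estimathcalH} with the Benamou--Brenier characterization of the $\mathbf{d}_2$ distance, using that $(\mu, w_\eta)$ solves the continuity equation in \eqref{eq.wmu}.

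First, I recall from \eqref{eq.mathcalHprime} that
$$
\mathcal{H}'(\tau)=\kappa\int_\R \mu(\tau)|w_\eta(\tau)|^2\,d\eta,
$$
so integrating on $[\tau-1,\tau]$ and using Lemma \ref{lem.estimathcalH} gives
$$
\int_{\tau-1}^{\tau}\!\int_\R \mu|w_\eta|^2\,d\eta\,d\sigma=\frac{1}{\kappa}\bigl(\mathcal{H}(\tau)-\mathcal{H}(\tau-1)\bigr)\le \frac{1}{\kappa}\mathcal{H}(\tau)\le C\,e^{2\kappa\tau}.
$$

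Next, since the pair $(\mu,w_\eta)$ satisfies the continuity equation $\mu_\tau-(\mu w_\eta)_\eta=0$ on $[\tau-1,\tau]$, the Benamou--Brenier formula (see e.g.\ \cite[Ch.~5]{Sa}) combined with Cauchy--Schwarz yields
$$
\mathbf{d}_2^{\,2}(\mu(\tau-1),\mu(\tau))\le \int_{\tau-1}^{\tau}\!\int_\R \mu|w_\eta|^2\,d\eta\,d\sigma \le C\,e^{2\kappa\tau},
$$
and therefore $\mathbf{d}_2(\mu(\tau-1),\mu(\tau))\le C'e^{\kappa\tau}$.

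I then telescope this one-step bound: for every integer $n\ge 1$,
$$
\mathbf{d}_2(\mu(\tau-n),\mu(\tau))\le \sum_{k=1}^{n}\mathbf{d}_2(\mu(\tau-k),\mu(\tau-k+1))\le \sum_{k=1}^{n} C'\,e^{\kappa(\tau-k+1)}\le \frac{C'\,e^{\kappa}}{1-e^{-\kappa}}\,e^{\kappa\tau}.
$$
Finally, I send $n\to\infty$: by \eqref{bound.tildegammay} the densities $\mu(\sigma,\cdot)$ have supports contained in a fixed compact interval $[-C,C]$ for all $\sigma\leq 0$, so the $L^\infty$ weak-$*$ convergence $\mu(\tau-n)\to\phi$ from Theorem \ref{prop.cvat0} upgrades to convergence in $\mathbf{d}_2$. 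Passing to the limit gives
$$
\mathbf{d}_2(\mu(\tau),\phi)\le \frac{C'\,e^{\kappa}}{1-e^{-\kappa}}\,e^{\kappa\tau},
$$
which is the desired bound. The only mildly delicate step is the justification of the passage to the limit at $-\infty$; this rests on the uniform compactness of the supports given by \eqref{bound.tildegammay}, without which weak-$*$ convergence would be insufficient to control $\mathbf{d}_2$.
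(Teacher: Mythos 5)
Your proof is correct, and it takes a genuinely different route from the paper's. The paper works directly with the Lagrangian flow $\hat\gamma$ that pushes $\phi$ forward to $\mu(\tau)$: it bounds $\mathbf{d}_2^2(\mu(\tau),\mu(\tau_0))\le\int|\hat\gamma(\tau,y)-\hat\gamma(\tau_0,y)|^2\phi\,dy$, applies a \emph{weighted} Cauchy--Schwarz in $\sigma$ with weight $e^{r\sigma}$ (some $0<r<2\kappa$), recognizes $\int\mu|w_\eta|^2=\kappa^{-1}\mathcal H'$, and then integrates by parts in $\sigma$ using $0\le\mathcal H(\sigma)\le Ce^{2\kappa\sigma}$. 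This yields a one-shot bound on $[\tau_0,\tau]$ that remains finite as $\tau_0\to-\infty$; the whole point of the exponential weight is to avoid the factor $|\tau-\tau_0|$ that a plain Cauchy--Schwarz would produce. You sidestep that obstruction differently: the Benamou--Brenier inequality gives the uniform one-step bound $\mathbf{d}_2(\mu(\tau-1),\mu(\tau))\le Ce^{\kappa\tau}$ on unit intervals, and the telescoping geometric series replaces the weighted integration by parts. The identification of the limit as $\phi$ (using Theorem \ref{prop.cvat0} plus uniform compact supports from \eqref{bound.tildegammay}) plays the same role in both arguments, except the paper reaches it by noting $\liminf_{\tau_0\to-\infty}\mathbf d_2(\mu(\tau),\mu(\tau_0))\ge\mathbf d_2(\mu(\tau),\phi)$ via lower semicontinuity, which is essentially what you do. Your version is arguably more modular in that it relies on the abstract metric/Benamou--Brenier structure rather than the explicit 1D coupling; the paper's version stays closer to the Lagrangian machinery it has already set up and produces the bound without introducing a discrete time-grid.

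Two small remarks, neither of which affects correctness: the geometric sum more cleanly gives $\frac{C'}{1-e^{-\kappa}}e^{\kappa\tau}$ (your extra factor $e^{\kappa}$ is a harmless overestimate); and when you invoke Benamou--Brenier, you should note the velocity field is $-w_\eta$ (since the continuity equation is $\mu_\tau-(\mu w_\eta)_\eta=0$), a sign that of course disappears when squared.
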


\begin{proof} For any $r\in (0, 2\kappa)$ and any $\tau_0<\tau$, recalling \eqref{eq:refsug3}, we have 
\begin{align*}
{\bf d}_2^2(\mu(\tau), \mu(\tau_0)) & \leq \int_{\R} |\hat \gamma(\tau,y)-\hat\gamma(\tau_0,y)|^2 \phi(y)dy  \leq \int_{\R} \left| \int_{\tau_0}^\tau \hat \gamma_\tau(\sigma, y)d\sigma\right|^2\phi(y)dy \\ 
& \leq \int_\R \left(  \int_{\tau_0}^\tau e^{r\sigma}d\sigma\right)\left( \int_{\tau_0}^\tau e^{-r\sigma} |\hat \gamma_\tau(\sigma, y)|^2d\sigma\right) \phi(y)dy\\ 
& \leq C e^{r\tau} \int_{\tau_0}^\tau \int_\R e^{-r\sigma} |w_\eta(\sigma,\hat \gamma(\sigma, y))|^2\phi(y)dy d\sigma \\ 
& \leq Ce^{r\tau}  \int_{\tau_0}^\tau e^{-r\sigma} \int_\R  |w_\eta(\sigma,\eta)|^2\mu(\sigma,\eta)d\eta d\sigma . 
\end{align*}
Recalling \eqref{eq.mathcalHprime} we obtain 
\begin{align*}
{\bf d}_2^2(\mu(\tau), \mu(\tau_0)) & \leq   Ce^{r\tau}  \int_{\tau_0}^\tau  e^{-r\sigma} \mathcal H'(\sigma)d\sigma\\ 
& \leq Ce^{r\tau}    \left( e^{-r\tau} \mathcal H(\tau)- e^{-r\tau_0} \mathcal H(\tau_0)+ r \int_{\tau_0}^\tau e^{-r\sigma} \mathcal H(\sigma) d\sigma \right) .
\end{align*}
Using Lemma \ref{lem.estimathcalH}, and the fact that $\mathcal H\geq 0$, we get therefore
\begin{align*}
{\bf d}_2^2(\mu(\tau), \mu(\tau_0)) & \leq  Ce^{r\tau}    \left( e^{(2\kappa-r)\tau} - e^{-r\tau_0} \mathcal H(\tau_0)+ r \int_{\tau_0}^\tau e^{(2\kappa-r)\sigma}  d\sigma \right)  \leq C e^{2\kappa \tau}. 
\end{align*}
We obtain the result by letting $\tau_0\to -\infty$. 
\end{proof}

 Let $f$ be a locally Lipschitz map. We now use the ${\bf d_2}$--bound to estimate $\int f(\mu(\tau)-\phi)$ in terms of $\int |f_\eta|^2(\mu(\tau)+\phi)$. 

\begin{lem} \label{lem.intwmu-phi} There exists a constant $C>0$, independent of $f$, such that
$$
\left| \int_\R f(\mu(\tau)-\phi)\right| \leq C \left( \int_\R |f_\eta|^2( \mu(\tau)+\phi)\right)^{1/2} e^{\kappa \tau}. 
$$
\end{lem}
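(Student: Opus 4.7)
The plan is to use the 1D optimal transport interpretation to reduce the estimate to the Wasserstein-2 bound of Lemma \ref{lem.estid2mutau}. In one dimension the optimal transport between two probability measures on the line is the monotone rearrangement, and the pushforward relation \eqref{ilauekjzred3} from Proposition \ref{prop:existence} together with the monotonicity of $\hat\gamma$ in $y$ (encoded in \eqref{bound.tildegammay2}) shows that $T(\cdot):=\hat\gamma(\tau,\cdot)$ is precisely the optimal transport from $\phi$ to $\mu(\tau)$. The crucial feature is the uniform bilipschitz bound $C^{-1} \leq T'(y)\leq C$ from \eqref{bound.tildegammay2}, which is independent of $\tau$.

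Introducing the displacement interpolation $T_s(y):=(1-s)y+sT(y)$ for $s\in[0,1]$, I would write
\begin{align*}
\int_\R f(\mu(\tau)-\phi)\,d\eta
&= \int_{-R_\alpha}^{R_\alpha}\bigl[f(T(y))-f(y)\bigr]\phi(y)\,dy \\
&= \int_0^1\!\!\int_{-R_\alpha}^{R_\alpha} f_\eta(T_s(y))\,(T(y)-y)\,\phi(y)\,dy\,ds,
\end{align*}
and then apply Cauchy--Schwarz jointly in $(y,s)$ to get
\[
\left|\int f(\mu(\tau)-\phi)\,d\eta\right|^2 \leq \int_{-R_\alpha}^{R_\alpha}(T(y)-y)^2\phi(y)\,dy\, \cdot \int_0^1\!\!\int_{-R_\alpha}^{R_\alpha}|f_\eta(T_s(y))|^2\phi(y)\,dy\,ds.
\]
The first factor equals $d_2^2(\mu(\tau),\phi)$ by optimality of $T$ in one dimension and is controlled by $Ce^{2\kappa\tau}$ thanks to Lemma \ref{lem.estid2mutau}.

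For the second factor, since $T_s'(y)=(1-s)+sT'(y)\in[C^{-1},C]$ uniformly in $s$, each $T_s$ is a bilipschitz homeomorphism, and after the change of variables $z=T_s(y)$ the double integral becomes $\int_0^1\!\int|f_\eta(z)|^2\,d\mu_s(z)\,ds$, where $\mu_s:=(T_s)_\sharp \phi$ is the displacement interpolation between $\phi$ and $\mu(\tau)$, with explicit density $\mu_s(T_s(y))=\phi(y)/T_s'(y)$. It then remains to show the pointwise comparison
\[
\int_0^1\mu_s(\eta)\,ds \leq C\bigl(\mu(\tau,\eta)+\phi(\eta)\bigr),
\]
which would close the argument. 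This comparison is where the specific structure enters: since $\phi$ is unimodal (increasing on $[-R_\alpha,0]$, decreasing on $[0,R_\alpha]$) and $T_s^{-1}(\eta)$ lies between $\eta$ and $T^{-1}(\eta)$, the value $\phi(T_s^{-1}(\eta))$ is at worst comparable to $\max\{\phi(\eta),\phi(T^{-1}(\eta))\}$, and the identity $\mu(\tau,\eta)=\phi(T^{-1}(\eta))/T'(T^{-1}(\eta))$ combined with $T'\in[C^{-1},C]$ gives $\phi(T^{-1}(\eta))\leq C\mu(\tau,\eta)$.

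The main obstacle will be making this last pointwise bound rigorous near the boundary of the supports, where $\phi$ vanishes like $\mathrm{dist}^{1/\theta}$ and the endpoints of $\mathrm{supp}(\mu(\tau))$ are slightly displaced from those of $\mathrm{supp}(\phi)$. The uniform bilipschitz control on $\hat\gamma(\tau,\cdot)$ from \eqref{bound.tildegammay2}, together with the explicit Barenblatt-type profile of $\phi$, must be combined carefully so that the resulting constant $C$ is truly independent of $\tau$ as $\tau\to -\infty$.
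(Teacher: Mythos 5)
Your approach is essentially identical to the paper's: the same FTC/Cauchy--Schwarz reduction, the same identification of one factor as ${\bf d}_2(\mu(\tau),\phi)$ controlled by Lemma \ref{lem.estid2mutau}, and the same pointwise density comparison $\phi(T_\lambda^{-1}(\eta))\lesssim\max\{\phi(\eta),\mu(\tau,\eta)\}$ via unimodality of $\phi$ and the bound $\mu(\tau,\eta)\geq C^{-1}\phi(\hat\gamma^{-1}(\tau,\eta))$ coming from \eqref{bound.tildegammay2}.

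Since you flag the boundary comparison as the "main obstacle" without closing it: the paper handles it by first restricting to $\tau$ small enough that $|\hat\gamma(\tau,y)-y|\leq R_\alpha/4$, and then running a three-case analysis on the position of $\eta$ (assuming WLOG $T_\lambda^{-1}(\eta)<\eta$): for $\eta\leq 0$ monotonicity of $\phi$ on $(-\infty,0]$ gives the bound directly; for $\eta\in[0,R_\alpha/2]$ one uses that $\phi(R_\alpha/2)>0$ so $\phi(\eta)\geq\phi(R_\alpha/2)\geq\phi(R_\alpha/2)\phi(T_\lambda^{-1}(\eta))/\|\phi\|_\infty$; and for $\eta\geq R_\alpha/2$ one observes that $T_\lambda^{-1}(\eta)$ lies strictly between $\hat\gamma^{-1}(\tau,\eta)$ and $\eta$, both nonnegative, so monotonicity on $[0,\infty)$ gives $\phi(T_\lambda^{-1}(\eta))\leq\phi(\hat\gamma^{-1}(\tau,\eta))\leq C\,\mu(\tau,\eta)$. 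This is exactly the rigorous version of the heuristic you sketched, so your plan is on the right track and would close with this case analysis.
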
 

\begin{proof} Using the pushforward identity \eqref{pushmu}, we have 
\begin{align*}
\left| \int_\R f(\mu(\tau)-\phi)\right| & = \left| \int_\R (f(\tau, \hat \gamma(\tau, y))-f(\tau, y)) \phi(y)dy \right| \\ 
& = \left| \int_\R \int_0^1 f_\eta(\tau, (1-\lambda)y+\lambda \hat \gamma(\tau, y))(\hat \gamma(\tau, y)-y) \phi(y)d\lambda dy\right|  \\ 
& \leq \left( \int_\R \int_0^1 f_\eta(\tau, (1-\lambda)y+\lambda \hat \gamma(\tau, y))^2 \phi(y)d\lambda dy\right)^{1/2} 
\left(  \int_\R (\hat \gamma(\tau, y)-y)^2 \phi(y)dy\right)^{1/2} 
\end{align*}
As $\hat \gamma(\tau, \cdot)$ is the unique optimal transport from $\phi$ to $\mu(\tau)$,  the last term in the right-hand side is just ${\bf d}_2(\mu(\tau),\phi)$. We claim that there exists $\tau_0$ such that, for any $\tau<\tau_0$ and any non-negative map $g$,
\be\label{iauzkesbhkdjf}
\int_\R g( (1-\lambda)y+\lambda \hat \gamma(\tau, y)) \phi(y) dy \leq 
C \int_\R g(\mu(\tau)+\phi), 
\ee
where $C$ is independent of $\lambda$, $\tau$ and $g$. Note that this implies that 
\begin{align*}
\left| \int_\R f(\mu(\tau)-\phi)\right| & \leq C \left(\int_\R |f_\eta(\tau)|^2 (\mu(\tau)+\phi)\right)^{1/2}  {\bf d}_2(\mu(\tau),\phi) \leq C \left(\int_\R |f_\eta(\tau)|^2 (\mu(\tau)+\phi)\right)^{1/2} e^{\kappa \tau},
\end{align*}
where we used Lemma \ref{lem.estid2mutau} in the last inequality. This completes the proof of the lemma. 

We now check that \eqref{iauzkesbhkdjf} holds. Let us choose $\tau_0$ so small that $|\hat \gamma(\tau, y)-y| \leq R_\alpha/4$ for any $\tau\leq \tau_0$ and $y\in [-R_\alpha, R_\alpha]$ (note that such a choice is possible by Lemma \ref{lem:flow lim identify} and the locally uniform convergence discussed in the proof of Lemma \ref{lem:identify eq}). Let us set $T_\lambda(y)= (1-\lambda)y+\lambda \hat \gamma(\tau, y)$. Note that $T_\lambda$ is bi-Lipschitz from $(-R_\alpha, R_\alpha)$ to its image. Hence 
$$
 \int_\R g( (1-\lambda)y+\lambda \hat \gamma(\tau, y)) \phi(y) dy \leq C \int_\R g( x) \phi(T^{-1}_\lambda(x)) dx. 
 $$
 We now check that 
 $$
  \phi(T^{-1}_\lambda(x))\leq C\max\{\phi(x), \mu(\tau, x)\} ,
  $$
  where $C$ is independent of $\lambda$ and $x$. To prove this claim, we note that, by \eqref{bound.tildegammay2} and \eqref{pushmu2}, for any $y\in (-R_\alpha, R_\alpha)$, 
$$
\mu(\tau,\hat \gamma(\tau, y)) \geq C^{-1} \phi(y),
$$
so that 
\be\label{okuzakehsdj}
\mu(\tau, x)\geq C^{-1} \phi(\hat \gamma^{-1}(\tau,x))
\ee
where $\hat \gamma^{-1}(\tau, \cdot)$ is the inverse of $\hat \gamma(\tau, \cdot)$ with respect to the last variable. Hence we just have to prove that 
\be\label{inverse}
  \phi(T^{-1}_\lambda(x))\leq C\max\{\phi(x), \phi( \hat \gamma^{-1}(\tau,x))\} . 
  \ee
Set $y= T^{-1}_\lambda(x)$ and assume (to fix ideas) that $y<x$. If $x\leq 0$, then, as $\phi$ is nondecreasing on $(-\infty,0]$,  $\phi(y)\leq \phi(x)$. If now $x\in [0,R_\alpha/2]$, then, as $\phi$ is nonincreasing on $[0,\infty)$ and $\phi(R_\alpha/2)>0$, 
$$
\phi(x) \geq \phi(R_\alpha/2) \geq \phi(R_\alpha/2) \frac{\phi(y)}{\|\phi\|_\infty}, 
$$
so that 
$$
\phi(y)\leq \frac{\|\phi\|_\infty}{\phi(R_\alpha/2)}\phi(x). 
$$
Finally, let us assume that $x\geq R_\alpha/2$. Note that, by the definition of $\tau_0$, $|\hat \gamma^{-1}(\tau,x)-x|\leq R_\alpha/4$  and thus $\hat \gamma^{-1}(\tau,x)\geq R_\alpha/4$. On the other hand, as by assumption $x>y$, we have $x= (1-\lambda)y+\lambda \hat \gamma(\tau, y) > y,$
so that $\hat \gamma(\tau, y) >y$. Hence $x\in (y, \hat \gamma(\tau, y))$. This implies that $\hat \gamma^{-1}(\tau, x) <y$. As $\hat \gamma^{-1}(\tau, x)\geq 0$ and $\phi$ is nonincreasing on $[0,\infty)$, we obtain that $\phi(\hat \gamma^{-1}(\tau, x))\geq \phi(y)$, which concludes the proof of \rife{inverse}. Hence \eqref{iauzkesbhkdjf} is proved.
\end{proof}

 With the above result, we can now obtain a precise estimate of the convergence rate for a key term that appears in the uniqueness argument.
\begin{lem}  We have
\be\label{LLka}
-Ce^{2\kappa \tau}  \leq \int_{\R} w(\tau)(\mu(\tau)-\phi)  \leq 0. 
\ee
In particular, 
\be \label{LLka11}\int_{\R} w(\tau)(\mu(\tau)-\phi)=O(e^{2\kappa \tau}). \ee

\end{lem}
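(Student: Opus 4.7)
The plan is to derive and then integrate a linear differential inequality for
\[
f(\tau) := \int_\R w(\tau)(\mu(\tau) - \phi)\, d\eta.
\]
Differentiating $f$ and using the two equations in \eqref{eq.wmu}---integration by parts against the continuity equation gives $\int w\,\mu_\tau\,d\eta = -\int \mu |w_\eta|^2\,d\eta$, while the HJ equation supplies $w_\tau = \tfrac{1}{2}|w_\eta|^2 - \mu^\theta - \tfrac{\alpha(1-\alpha)}{2}\eta^2 + \kappa w$ (using $2\alpha-1 = -\kappa$)---and then using the identity $\phi^\theta + \tfrac{\alpha(1-\alpha)}{2}\eta^2 \equiv \tfrac{\alpha(1-\alpha)R_\alpha^2}{2}$ on $[-R_\alpha, R_\alpha]$ together with $\int(\mu-\phi)\,d\eta = 0$ to cancel the terms that are linear in $\mu-\phi$, one obtains after simplification
\[
f'(\tau) - \kappa f(\tau) = -g(\tau), \qquad g(\tau) := \int\tfrac{1}{2}(\mu+\phi)|w_\eta|^2 + \int(\mu^\theta - \phi^\theta)(\mu - \phi) + \int_{|\eta|>R_\alpha}\tfrac{\alpha(1-\alpha)}{2}(\eta^2 - R_\alpha^2)\mu \ge 0,
\]
all three summands being non-negative (the middle one by monotonicity of $s\mapsto s^\theta$).

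For the upper bound $f \le 0$, I rewrite this as $(e^{-\kappa \tau} f(\tau))' = -e^{-\kappa\tau}g(\tau) \le 0$, so $\tau \mapsto e^{-\kappa\tau}f(\tau)$ is non-increasing. Lemma \ref{lem.intwmu-phi} applied to the test function $w$, together with the pointwise bound on $|w_\eta|$ on $\operatorname{supp}(\mu) \cup \operatorname{supp}(\phi)$ coming from \eqref{bound.ux}, shows that this monotone function is bounded, so a finite limit $L$ at $-\infty$ exists, and integrating the ODE on $(-\infty,\tau]$ yields the representation
\[
f(\tau) = L\, e^{\kappa\tau} - e^{\kappa\tau}\int_{-\infty}^{\tau} e^{-\kappa s}\, g(s)\,ds.
\]
To prove $L = 0$, I extract a sequence $\tau_n \to -\infty$ with $\int \mu(\tau_n) |w_\eta(\tau_n)|^2 \le C e^{2\kappa \tau_n}$, which exists by the mean-value theorem since $\int_{\tau_n-1}^{\tau_n}\int \mu |w_\eta|^2 = \kappa^{-1}(\mathcal H(\tau_n)-\mathcal H(\tau_n-1)) \le Ce^{2\kappa \tau_n}$ by Lemma~\ref{lem.estimathcalH}. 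The asymptotic comparison $\mu \ge c\phi$ on most of $\operatorname{supp}\phi$ (consequence of $\hat\gamma\to\mathrm{id}$ and the uniform bounds on $\hat\gamma_y$, up to a boundary layer of negligible measure) propagates this to $\int\phi(\tau_n)|w_\eta(\tau_n)|^2$, so Lemma~\ref{lem.intwmu-phi} now gives $|f(\tau_n)| \le C e^{2\kappa \tau_n}$; hence $e^{-\kappa\tau_n} f(\tau_n) \to 0$, forcing $L = 0$, and the representation gives $f(\tau) \le 0$.

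The lower bound $f(\tau) \ge -C e^{2\kappa\tau}$ then reduces to the estimate $\int_{-\infty}^\tau e^{-\kappa s}g(s)\,ds \le C e^{\kappa\tau}$, which I handle term by term. The kinetic contribution is controlled by integration by parts and Lemma~\ref{lem.estimathcalH}:
\[
\int_{-\infty}^\tau e^{-\kappa s}\,\mathcal H'(s)\,ds = e^{-\kappa \tau}\mathcal H(\tau) + \kappa\int_{-\infty}^\tau e^{-\kappa s}\mathcal H(s)\,ds \le C e^{\kappa\tau}.
\]
The Bregman-type term $\int(\mu^\theta-\phi^\theta)(\mu-\phi)$ and the outside-support tail $\int_{|\eta|>R_\alpha}(\eta^2-R_\alpha^2)\mu$ are both estimated via Lemma~\ref{lem.estid2mutau} combined with the uniform $L^\infty$ bounds on $\mu,\phi$, using the one-dimensional structure (in particular $\|\mu-\phi\|_{H^{-1}}\sim d_2(\mu,\phi)$) to translate the Wasserstein decay into the required $L^p$-type decay. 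The main obstacle is the upper-bound step: Lemma~\ref{lem.intwmu-phi} alone only yields $|f(\tau)|=O(e^{\kappa\tau})$, leaving the limit $L$ a priori nonzero; the upgrade to $O(e^{2\kappa\tau})$ along a well-chosen subsequence depends on both the mean-value extraction via the monotone decay of $\mathcal H$ and the Lagrangian lower bound $\mu\ge c\phi$ on the support of $\phi$, so that the qualitative convergence result of Section~\ref{sec.localbehavior} enters quantitatively at this point.
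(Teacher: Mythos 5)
Your overall framing---the Lasry--Lions computation gives $(\mathrm{e}^{-\kappa\tau}f)' = -\mathrm{e}^{-\kappa\tau}g(\tau)$ with $g\ge 0$, and $\mathrm{e}^{-\kappa\tau}f$ is bounded by Lemma \ref{lem.intwmu-phi}, hence monotone and convergent at $-\infty$---is sound, and you correctly identify that the whole burden then sits on showing the limit $L$ vanishes and on controlling $\int_{-\infty}^\tau \mathrm{e}^{-\kappa s}g(s)\,ds$. However, both of the specific devices you propose for these two steps have genuine gaps.

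First, to force $L=0$ you want to upgrade $\int\mu(\tau_n)|w_\eta(\tau_n)|^2 = O(\mathrm{e}^{2\kappa\tau_n})$ (mean-value extraction from $\mathcal H' = \kappa\int\mu|w_\eta|^2$, which is correct) to $\int\phi(\tau_n)|w_\eta(\tau_n)|^2 = O(\mathrm{e}^{2\kappa\tau_n})$. This requires the pointwise comparison $\phi\le C\mu(\tau)$. What the paper actually provides, via \eqref{okuzakehsdj}, is $\mu(\tau,x)\ge C^{-1}\phi(\hat\gamma^{-1}(\tau,x))$, not $\mu(\tau,x)\ge C^{-1}\phi(x)$. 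Near the endpoints $\pm R_\alpha$, where $\phi(r)\sim(R_\alpha-|r|)^{1/\theta}$, a displacement of the flow of size comparable to the distance to the boundary destroys the comparison, and on the set $\operatorname{supp}\phi\setminus\operatorname{supp}\mu(\tau)$ it fails outright. You call this a ``boundary layer of negligible measure,'' but $|w_\eta|$ is only bounded there, not small, so the contribution $\int_{\text{layer}}\phi|w_\eta|^2$ is of order (layer width)$^{1+1/\theta}$, which there is no reason to believe is $O(\mathrm{e}^{2\kappa\tau_n})$. Second, in the lower-bound step you claim to control the Bregman term $\int(\mu^\theta-\phi^\theta)(\mu-\phi)$ through $\mathbf d_2$ decay and the heuristic $\|\mu-\phi\|_{H^{-1}}\sim\mathbf d_2(\mu,\phi)$; even setting aside that this equivalence degenerates where $\phi$ vanishes, there is no clean passage from an $H^{-1}$ smallness estimate to a Bregman divergence of this power-type nonlinearity, and you give none.

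The paper sidesteps both obstacles with a single nonlinear device. Rather than trying to bound $g$ from above at all, it reads Lemma \ref{lem.intwmu-phi} in the opposite direction, $\int|w_\eta|^2(\mu+\phi)\ge C^{-1}\bigl|\int w(\mu-\phi)\bigr|^2\mathrm{e}^{-2\kappa\tau}$, and keeps only the kinetic part of $g$ as a lower bound for $-(\mathrm{e}^{-\kappa\tau}f)'$. Setting $\xi=\mathrm{e}^{-\kappa\tau}f$, this yields the Riccati-type differential inequality $-\xi'(\tau)\ge C^{-1}\xi(\tau)^2\mathrm{e}^{-\kappa\tau}$, whose integration on $[\tau,0]$ forces $\xi(\tau)\to 0^-$ as $\tau\to-\infty$ \emph{and} gives $|\xi(\tau)|\le C\mathrm{e}^{\kappa\tau}$ in one stroke, with no need for any upper bound on $g$, no $\phi$-versus-$\mu$ pointwise comparison, and no control of the Bregman term. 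If you want to rescue your linear scheme, you would have to replace the comparison $\mu\ge c\phi$ and the $\mathbf d_2$-to-Bregman step with rigorous and quantitative substitutes; the Riccati route is what allows the paper to avoid ever establishing such refined information about $\mu-\phi$.
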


\begin{proof} We apply the Lasry-Lions  argument to the system \eqref{eq.wmu}. That is, we set \(V(\eta):=\frac{\alpha(1-\alpha)}{2}\eta^{2}\), and we first compute, using \(\mu_\tau=(\mu w_\eta)_\eta\) and integrating by parts:
\[
\frac{d}{d\tau}\!\int_{\R} w\mu
=\int_{\R} w_\tau\mu-\int_{\R}\mu\,|w_\eta|^{2}.
\]
With the Hamilton–Jacobi equation
\(
-w_\tau+\tfrac12|w_\eta|^{2}=\mu^\theta+V+(2\alpha-1)w
\)
(i.e. \(w_\tau=\frac12|w_\eta|^{2}-\mu^\theta-V-(2\alpha-1)w\)), we find
\[
\frac{d}{d\tau}\!\int_{\R} w\mu
=-\frac12\int_{\R}\mu |w_\eta|^{2}
-\int_{\R}\mu^{\theta+1}
-\int_{\R}V\mu
-(2\alpha-1)\!\int_{\R}w\mu.
\]
Multiplying by \(e^{-\kappa\tau}\) cancels the linear term \((2\alpha-1)\!\int w\mu\) (since \(-\kappa=2\alpha-1\)), and gives
\begin{equation}\label{eq:LL-mu}
-\frac{d}{d\tau}\!\bigg(e^{-\kappa\tau}\!\int_{\R} w\mu\bigg)
=e^{-\kappa\tau}\!\left[
\frac12\!\int_{\R}\mu |w_\eta|^{2}
+\int_{\R}\mu^{\theta+1}
+\int_{\R}V\mu
\right].
\end{equation}
A similar computation, this time applied to the quantity \(\int w\phi\) (recall \(\phi\) is \(\tau\)-independent), yields
\begin{equation}\label{eq:LL-phi}
-\frac{d}{d\tau}\!\bigg(e^{-\kappa\tau}\!\int_{\R} w\phi\bigg)
=e^{-\kappa\tau}\!\left[
-\frac12\!\int_{\R}\phi |w_\eta|^{2}
+\int_{\R}\mu^{\theta}\phi
+\int_{\R}V\phi
\right].
\end{equation}
Subtracting \eqref{eq:LL-phi} from \eqref{eq:LL-mu} gives
\begin{equation}\label{eq:LL-master}
-\frac{d}{d\tau}\!\bigg(e^{-\kappa\tau}\!\int_{\R} w(\mu-\phi)\bigg)
=e^{-\kappa\tau}\!\left[
\frac12\!\int_{\R}(\mu+\phi)\,|w_\eta|^{2}
+\int_{\R}(\mu-\phi)\big(\mu^{\theta}+V\big)
\right].
\end{equation}
To make the last term nonnegative, split \(\R=\{\phi>0\}\cup\{\phi=0\}\) and use
\(
\phi^\theta=\frac{\alpha(1-\alpha)}{2}(R_\alpha^{2}-\eta^{2})_+,
\)
together with
\(
\int_{\R}(\mu-\phi)=0.
\)
On \(\{\phi>0\}\) we have \(V+\phi^\theta=\frac{\alpha(1-\alpha)}{2}R_\alpha^{2}\), so adding and subtracting this constant times \((\mu-\phi)\) and using the mass constraint yields
\[
\int_{\R}(\mu-\phi)(\mu^{\theta}+V)
=\int_{\{\phi>0\}}(\mu-\phi)(\mu^\theta-\phi^\theta)
+\int_{\{\phi=0\}}\mu\Big(\mu^\theta+\frac{\alpha(1-\alpha)}{2}(\eta^{2}-R_\alpha^{2})\Big).
\]
Hence we arrive at
\begin{multline}\label{LLka1}
- \frac{d}{d\tau} \left(e^{-\kappa \tau} \int_\R w(\tau)(\mu(\tau)-\phi) \right) \\
= e^{-\kappa \tau} \left( \frac12\int_\R ( |w_\eta(\tau)|^2(\mu(\tau)+\phi) + \int_{\phi>0} (\mu^\theta(\tau)-\phi^\theta)(\mu(\tau)-\phi)+ \int_{\phi=0} \mu \left( \mu^{\theta}+ \frac{\alpha(1-\alpha)}{2}(\eta^2-R_{\alpha}^2) \right) \right)\\
\geq e^{-\kappa \tau} \left( \intr \frac{\mu+\phi}{2}w_{\eta}^2+ \intr (\mu-\phi)(\mu^{\theta}-\phi^{\theta}) \right). 
\end{multline}
Since the second term is nonnegative, by Lemma \ref{lem.intwmu-phi} the  right-hand side is bounded below by \[C^{-1}\left| \int_\R w(\tau)(\mu(\tau)-\phi)\right|^2 e^{-3\kappa \tau}.\] Let us set 
$$
\xi(\tau):= e^{-\kappa \tau} \int_\R w(\tau)(\mu(\tau)-\phi).
$$ Then $\xi$ satisfies the ODE 
$$
- \xi'(\tau) \geq C^{-1} (\xi(\tau))^2 e^{-\kappa \tau},
$$
and is, in particular, nonincreasing.
Integrating this equation between $\tau<0$ and $0$, we get 
$$
\frac{1}{\xi(0)}-\frac{1}{\xi(\tau)} \geq C^{-1} (e^{-\kappa \tau}-1). 
$$
This implies that $\xi(\tau)\to 0^-$ as $\tau\to-\infty$ and, as $\xi$ is nonincreasing, that $\xi$ is negative. Hence
$$
-C e^{\kappa \tau}  \leq \xi(\tau) \leq 0. 
$$
We obtain the lemma by the definition of $\xi$.
\end{proof}

The last estimate needed to characterize the solutions is the following non-degeneracy integral estimate for $\mu$ in its support.

\begin{prop} \label{prop: int reciprocals}   Let $\theta>0$. For  each $\tau \in (-\infty,\ln(T))$, let $[a(\tau),b(\tau)]$ be the smallest closed interval containing $\{\mu(\tau) > 0\}.$ Then, for some constant $C$ independent of $\tau$,
\be \label{int.reciprocals} \int_{a(\tau)}^{b(\tau)} \frac{1}{\mu^{\theta-1}(\tau,\eta)}d\eta \leq C. \ee  \end{prop}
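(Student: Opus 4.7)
The plan is to reduce the integral to one over the fixed interval $(-R_\alpha, R_\alpha)$ by passing to Lagrangian coordinates. Set $\hat\gamma(\tau, y) = t^{-\alpha}\tilde\gamma(t,y)$ with $t = e^\tau$, as in the previous subsection. Rescaling \eqref{ilauekjzred2}--\eqref{ilauekjzred3}, one has $\mu(\tau, \cdot) = \hat\gamma(\tau,\cdot)_\sharp \phi$ and
\[
\mu(\tau, \hat\gamma(\tau, y)) = \frac{\phi(y)}{\hat\gamma_y(\tau, y)}, \qquad y \in (-R_\alpha, R_\alpha).
\]
Since $\hat\gamma_y > 0$ by \eqref{bound.tildegammay2}, the map $y \mapsto \hat\gamma(\tau, y)$ is an increasing bijection onto $[a(\tau), b(\tau)]$, so in particular $a(\tau) = \hat\gamma(\tau, -R_\alpha)$ and $b(\tau) = \hat\gamma(\tau, R_\alpha)$.

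With the substitution $\eta = \hat\gamma(\tau, y)$ (so that $d\eta = \hat\gamma_y(\tau,y)\,dy$) and the pushforward relation above, I would rewrite
\[
\int_{a(\tau)}^{b(\tau)} \frac{d\eta}{\mu(\tau, \eta)^{\theta-1}} \;=\; \int_{-R_\alpha}^{R_\alpha} \frac{\hat\gamma_y(\tau,y)}{\bigl(\phi(y)/\hat\gamma_y(\tau,y)\bigr)^{\theta-1}}\, dy \;=\; \int_{-R_\alpha}^{R_\alpha} \frac{\hat\gamma_y(\tau, y)^{\theta}}{\phi(y)^{\theta-1}}\, dy.
\]

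To conclude, I would combine two ingredients. First, from \eqref{bound.tildegammay2}, $\hat\gamma_y(\tau, y) \leq C$ uniformly in $\tau$ and $y$. Second, by the explicit form of $\phi$, one has $\phi(y) \asymp (R_\alpha^2 - y^2)^{1/\theta}$ near the endpoints $y = \pm R_\alpha$, so that $\phi^{-(\theta-1)}$ has integrable singularities of order $(\theta-1)/\theta < 1$, giving $\int_{-R_\alpha}^{R_\alpha} \phi(y)^{-(\theta-1)}\, dy < \infty$. Putting these together yields the desired uniform bound.

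The argument is essentially a change of variables once the sharp two-sided bound on $\hat\gamma_y$ is in hand, and there is no substantive obstacle at this step. Note that the proof does not actually use $\theta > 2$: the estimate holds for every $\theta > 0$, but the conclusion will only be needed in the super-critical regime, where $u$ is unbounded near $t=0$ and finer tools are required to close the uniqueness argument.
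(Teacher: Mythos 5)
Your proof is correct and takes essentially the same route as the paper: pass to Lagrangian coordinates via $\eta = \hat\gamma(\tau,y)$, use the two-sided bound $C^{-1}\le\hat\gamma_y\le C$ from \eqref{bound.tildegammay2}, and invoke the integrability of $\phi^{1-\theta}$ near $\pm R_\alpha$. Your side remark is also accurate: since $\phi(y)\sim(R_\alpha-|y|)^{1/\theta}$ near the endpoints, the exponent $-(\theta-1)/\theta=-1+1/\theta>-1$ makes $\int_{-R_\alpha}^{R_\alpha}\phi^{1-\theta}$ finite for every $\theta>0$, so the paper's phrase ``given that $\theta>2$'' is only contextual rather than a genuine hypothesis for this particular estimate.
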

\begin{proof} 
Recall  from \rife{pushmu} that $a(\tau)=\hat{\gamma}(\tau,-R_{\alpha})$, and $b(\tau)=\hat{\gamma}(\tau,R_{\alpha})$. Thus, switching to Lagrangian coordinates,   using \rife{pushmu2} and \eqref{bound.tildegammay2} we obtain
\be \int_{a(\tau)}^{b(\tau)} \frac{1}{\mu(\tau,\eta)^{\theta-1}} d\eta = \int_{-R_{\alpha}}^{R_\alpha} \frac{\hat{\gamma}_y(\tau,y)}{\mu(\tau,\hat{\gamma}(\tau,y))^{\theta-1}} dy \leq C \int_{-R_{\alpha}}^{R_\alpha} \frac{1}{\phi(y)^{\theta-1}} dy < \infty
 \ee
where last integral is bounded, for any $\theta>0$, by definition of $\phi$.
\end{proof}

\subsection{Uniqueness in the case $\theta>2$}
 As it turns out, conditions \eqref{osc w bd}, \eqref{LLka11}, and  \eqref{int.reciprocals}, as well as a weaker version of \eqref{sharp_support estim}, are the necessary inputs to guarantee uniqueness. In terms of the original solution $(u,m)$, they can be rewritten as
\be \label{osc condition u} \osc_{[-Rt^{\alpha},R t^{\alpha}]}(u(t)) \leq C_R t^{2\alpha-1}, \,\,R>0,\ee
\be \label{um convergence condition}\intr (u(x,t)+\alpha x^2/(2t))(m(x,t)-t^{-\alpha}\phi(t^{-\alpha}x))dx = O(t^{\kappa}),\ee
\be \label{interval condition} \int_{\overline{a}(t)}^{\overline{b}(t)}\frac{1}{m^{\theta-1}(x,t)} dx\leq C t^{2-2\alpha}, \quad and \quad |\overline{a}(t)|+|\overline{b}(t)|\leq Ct^{\alpha}\ee
where $[\overline{a}(t),\overline{b}(t)]$ is the smallest closed interval containing $\{m(t)>0\}$. We note that the second inequality in \eqref{interval condition} is a consequence of \eqref{sharp_support estim}, which says that the two free boundary curves that enclose the support of $m(t)$ are $O(t^{\alpha})$. \vskip1em

We prove now that there exists a unique solution satisfying the above conditions.

\begin{thm}\label{prop.unique} When $\theta>2$, there exists a unique solution  to \eqref{eq.planning} satisfying  \eqref{esti.mtheta+1 2}, \eqref{osc condition u},  \eqref{um convergence condition}, and \eqref{interval condition},  in the following sense: $m$ is unique, and $u_x$ is unique on the set $\{m>0\}$.
\end{thm}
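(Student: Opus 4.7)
Existence is immediate from Proposition \ref{prop:existence}: the constructed solution satisfies \eqref{osc condition u} by passing Lemma \ref{lem: osc u} to the limit, \eqref{interval condition} by combining Proposition \ref{prop: int reciprocals} with Corollary \ref{sharp_support}, and \eqref{um convergence condition} by rewriting \eqref{LLka11} in the original variables. So the real content of the theorem is uniqueness.

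For uniqueness, the plan is to run a Lasry-Lions monotonicity argument in self-similar variables. Given two solutions $(u_1,m_1)$, $(u_2,m_2)$ satisfying the conditions, let $(w_i,\mu_i)$ be their rescalings. A direct computation on \eqref{eq.wmu} yields
\[
\frac{d}{d\tau}\!\left[e^{-\kappa\tau}\!\!\int (w_1-w_2)(\mu_1-\mu_2)\right]
= -e^{-\kappa\tau}\!\left[\tfrac12\!\int(\mu_1+\mu_2)(w_{1,\eta}-w_{2,\eta})^2 + \!\!\int(\mu_1^\theta-\mu_2^\theta)(\mu_1-\mu_2)\right];
\]
integrating from $\tau$ up to $\ln T$ and using the common terminal value $\mu_1(\ln T)=\mu_2(\ln T)$ gives
\[
e^{-\kappa\tau}g(\tau)=\int_\tau^{\ln T}e^{-\kappa s}R(s)\,ds\ge 0,\qquad g(\tau):=\!\int(w_1-w_2)(\mu_1-\mu_2),
\]
with a non-negative integrand $R$. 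Since the right-hand side is non-decreasing as $\tau\to-\infty$, showing $e^{-\kappa\tau}g(\tau)\to 0$ would force $R\equiv 0$, whence $\mu_1\equiv\mu_2$ and $w_{1,\eta}\equiv w_{2,\eta}$ on $\{\mu_1+\mu_2>0\}$; uniqueness of $u$ inside $\{m>0\}$ up to an additive constant then follows from the HJ equation.

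To prove $e^{-\kappa\tau}g(\tau)\to 0$ I would split, using $\int(\mu_i-\phi)=0$,
\[
g(\tau)=\sum_{i=1,2}\int w_i(\mu_i-\phi) - \int w_1(\mu_2-\phi) - \int w_2(\mu_1-\phi).
\]
The diagonal terms are $O(e^{2\kappa\tau})$, which is precisely condition \eqref{um convergence condition} transported to self-similar variables (cf.\ \eqref{LLka11}). The cross terms $\int w_i(\mu_j-\phi)$ with $i\ne j$ are the real difficulty. Writing $\mu_j=(\hat\gamma_j)_\sharp\phi$ with $\hat\gamma_j$ the non-decreasing optimal transport from $\phi$ to $\mu_j$, one has
\[
\int w_i(\mu_j-\phi)\,d\eta = \int_{-R_\alpha}^{R_\alpha}[w_i(\hat\gamma_j(y))-w_i(y)]\,\phi(y)\,dy,
\]
and the Cauchy-Schwarz device from the proof of Lemma \ref{lem.intwmu-phi}, combined with a push-forward inequality of the form $\phi(s_\lambda^{-1}(\eta))\le C\max\{\phi(\eta),\mu_i(\eta),\mu_j(\eta)\}$ along $s_\lambda:=(1-\lambda)\mathrm{id}+\lambda\hat\gamma_j$ (valid for $\tau$ very negative, once $\hat\gamma_j$ is close to the identity), yields
\[
\Bigl|\int w_i(\mu_j-\phi)\Bigr|\le C\,{\bf d}_2(\mu_j,\phi)\Bigl(\int(\mu_i+\phi)|w_{i,\eta}|^2\,d\eta\Bigr)^{1/2}.
\]

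The main obstacle is then to produce the two quantitative ingredients above for an \emph{arbitrary} solution satisfying the assumptions, not only for the one built by approximation: the Wasserstein rate ${\bf d}_2(\mu_j,\phi)\le Ce^{\kappa\tau}$ and local integrability in $\tau$ of $e^{-\kappa s}\int(\mu_i+\phi)|w_{i,\eta}|^2$ near $-\infty$. Both should follow by re-running the Lyapunov analysis of Subsection~\ref{subsec_last} for a generic solution: condition \eqref{um convergence condition} gives $e^{-\kappa\tau}\int w_j(\mu_j-\phi)\to 0$, which in turn forces $\mathcal H_j(\tau)\to 0$ as $\tau\to-\infty$, so the ODE $\mathcal H_j'(\tau)\ge 2\kappa\mathcal H_j(\tau)$ yields $\mathcal H_j(\tau)\le Ce^{2\kappa\tau}$; the oscillation bound \eqref{osc w bd} controls the free additive constants in $w_j$; and the non-degeneracy \eqref{interval condition} provides the integrability of $\mu_j^{\theta+1}$ and of the flow-derived quantities used in Lemma \ref{lem.estid2mutau}. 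Once those are in place, a Cauchy-Schwarz in $\tau$ on a short interval combined with the integrability estimate gives $e^{-\kappa\tau}|g(\tau)|\to 0$ as $\tau\to-\infty$, completing the proof.
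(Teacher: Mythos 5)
Your existence discussion is correct. For uniqueness you take a genuinely different route from the paper, and it has a gap at the central estimate. Your decomposition of $g(\tau)=\int(w_1-w_2)(\mu_1-\mu_2)$ into diagonal and cross terms, and the use of Cauchy--Schwarz in the spirit of Lemma~\ref{lem.intwmu-phi}, lead you to require two quantitative facts about each of the \emph{abstract} solutions: the rate $\mathbf d_2(\mu_j(\tau),\phi)\le Ce^{\kappa\tau}$ and the local-in-$\tau$ integrability of $e^{-\kappa\tau}\int(\mu_i+\phi)|w_{i,\eta}|^2$. You correctly flag these as the obstacle, but your proposed remedy---``re-running the Lyapunov analysis of Subsection~\ref{subsec_last} for a generic solution''---does not go through from the stated hypotheses. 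Lemma~\ref{lem.estimathcalH} establishes $\mathcal H\ge0$ only after Theorem~\ref{prop.cvat0} has been proved, i.e.\ after $\mu(\tau)\rightharpoonup\phi$, and that rested on the a~priori $L^\infty$ bound \eqref{m.bound.infty}, which is not assumed in Theorem~\ref{prop.unique}. Lemma~\ref{lem.estid2mutau} then converts Lyapunov decay into a $\mathbf d_2$-rate via the Lagrangian flow and the two-sided bound \eqref{bound.tildegammay2}, again an artifact of the approximation scheme. Even the link you assert between \eqref{um convergence condition} and $\mathcal H_j(\tau)\to0$ is not immediate: $\mathcal H_j$ and $\int w_j(\mu_j-\phi)$ are distinct quantities. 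None of \eqref{osc condition u}, \eqref{um convergence condition}, \eqref{interval condition} directly yields the Wasserstein rate, and establishing it would require a substantial program of its own.

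The paper avoids this entirely, and this is precisely where condition \eqref{interval condition}---which in your write-up appears only as a side-remark---does the real work. Integrating \eqref{LLka1} one extracts a sequence $\tau_n\to-\infty$ along which $\int(\mu^\theta-\phi^\theta)(\mu-\phi)=O(e^{2\kappa\tau_n})$ (and likewise for $\mu_1$); then \eqref{interval condition}, together with $\int\phi^{1-\theta}<\infty$ (which is where $\theta>2$ enters), turns this via Cauchy--Schwarz into the $L^1$ rate $\int|\mu(\tau_n)-\phi|=O(e^{\kappa\tau_n})$. Combined with the oscillation bound \eqref{osc condition u} this estimates $g(\tau_n)$ directly---no splitting into cross terms and no $\mathbf d_2$ estimate needed. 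A further bootstrap through the Lasry--Lions inequality, using the reciprocal integrability once more to convert an $L^2(\mu+\mu_1)$ bound on $w_\eta-(w_1)_\eta$ into an oscillation bound of order $O(e^{\kappa\tau_n/2})$, then closes the argument. In short: your strategy substitutes a Wasserstein/Lyapunov route for what the paper achieves by an elementary Cauchy--Schwarz along a sequence, but the Wasserstein inputs are unavailable at the level of generality the theorem requires, which is exactly the gap that \eqref{interval condition} is designed to fill.
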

\begin{proof}  The existence follows from Proposition \ref{prop:existence} and the analysis of Subsection \ref{subsec_last}, so we focus on uniqueness. Note that the condition on $|\overline{a}(t)|+|\overline{b}(t)|$ simply means that the support of $\mu(\tau)$ is uniformly bounded as $\tau \to -\infty.$ Let $(w,\mu),(w_1,\mu_1)$ be two solutions satisfying the assumptions.
Integrating \eqref{LLka1} and using \eqref{LLka11}, we have, for each $\tau_0<0$,
\be \label{upfoqcfqn}
\int_{\tau_0-1}^{\tau_0} e^{-\kappa \tau} \left(  \frac12\int_\R  |w_\eta(\tau)|^2(\mu(\tau)+\phi) + (\mu^\theta(\tau)-\phi^\theta)(\mu(\tau)-\phi)\right)d\tau = O(e^{\kappa \tau_0})
\ee
where we recall that $\kappa=1-2\alpha$ is positive due to $\theta>2$.
The same computation is valid for $(w_1,\mu_1)$. Thus, there exists a sequence $\tau_n \to -\infty$ such that
\begin{multline} \label{upfasdca}
 \int_\R  \frac12|w_\eta(\tau_n)|^2(\mu(\tau_n)+\phi) + (\mu^\theta(\tau_n)-\phi^\theta)(\mu(\tau_n)-\phi)\\
+ \int_\R \frac12  |(w_1)_\eta(\tau_n)|^2(\mu_1(\tau_n)+\phi) + (\mu_1^\theta(\tau_n)-\phi^\theta)(\mu_1(\tau_n)-\phi)
= O(e^{2\kappa \tau_n})
\end{multline}
Let the intervals $[a(\tau),b(\tau)]$ and $[a_1(\tau),b_1(\tau)]$ be defined according to Proposition \ref{prop: int reciprocals}, corresponding to $\mu$ and $\mu_1$, respectively. In view of \eqref{int.reciprocals} and the integrability of $\phi^{1-\theta}$,  together with the fact that $\theta-1>1$, we have 
\be \label{upfparecip}\int_{[a(\tau),b(\tau)]\cup [a_1(\tau),b_1(\tau)]} \frac{1}{\mu(\tau)+\mu_1(\tau)}+\int_{[a(\tau),b(\tau)]\cup [-R_{\alpha},R_{\alpha}]} \frac{1}{\mu(\tau)^{\theta-1}+\phi^{\theta-1}}= O(1).\ee  Thus, \eqref{upfasdca} yields
\be \label{upfcasL1}  \int_\R |\mu(\tau_n)-\phi| \leq C \left( \int_\R (\mu(\tau_n)^{\theta-1}+\phi^{\theta-1})(\mu(\tau_n)-\phi)^2 \right)^{\frac12} = O(e^{\kappa \tau_n}), \ee
and, similarly,
\be \label{upfcasL12}  \int_\R |\mu_1(\tau_n)-\phi| = O(e^{\kappa \tau_n}). \ee
We thus have, in view of \eqref{osc w bd} and the bounded support of $\mu$, $\mu_1$ and $\phi$,
\be \int_\R (w(\tau_n)-  w_1(\tau_n))(\mu(\tau_n)-\mu_1(\tau_n)) = O(e^{\kappa \tau_n} ). \ee
Therefore, exploiting as usual the duality  between the equations of $\mu-\mu_1$ and $w-w_1$ gives 
\be \label{upfasvac3} \int_{\tau_n-1}^{\tau_n}\int_{\R} |w_{\eta}(\tau)-(w_1)_{\eta}(\tau)|^2(\mu(\tau)+\mu_1(\tau))e^{-\kappa \tau} d\tau = O(1)\ee
Noting that \eqref{upfoqcfqn} holds for $\tau_0=\tau_n$, we may in fact replace $\tau_n$ by a new sequence, not relabeled,  such that \eqref{upfasdca} still holds, and simultaneously, thanks to \eqref{upfasvac3},
\be \label{upfascav4} \int_{\R} |w_{\eta}(\tau_n)-(w_1)_{\eta}(\tau_n)|^2(\mu(\tau_n)+\mu_1(\tau_n)) = O(e^{\kappa \tau_n}).\ee 
Let \be S(\tau)=[a(\tau),b(\tau)] \cup [a_1(\tau),b_1(\tau)].\ee
Recall that $\phi$ is continuous, and strictly positive in $(-R_{\alpha},R_\alpha)$. Thus, in view of \eqref{upfcasL1} and \eqref{upfcasL12}, we must have $0 \in [a(\tau_n),b(\tau_n)]\cap [a_1(\tau_n),b_1(\tau_n)]$ for $n$ sufficiently large. In particular, we deduce that $S(\tau_n)$ is an interval for $n$ sufficiently large. As a result, \eqref{upfparecip} and \eqref{upfascav4} imply that, for some constant $C$ only depending on the size of the support of $\mu, \mu_1$,
\begin{multline} \label{upfpenulq1} \osc_{S(\tau_n)}(w(\tau_n)-w_1(\tau_n))\leq  C \int_{S(\tau_n)}|(w(\tau_n)-w_1(\tau_n))_{\eta}|\\
\leq C\left(\int_{\R}|w_{\eta}(\tau_n)-(w_1)_{\eta}(\tau_n)|^2(\mu(\tau_n)+\mu_1(\tau_n))\right)^{\frac12}\left(\int_{S(\tau_n)}\frac{1}{\mu(\tau_n)+\mu_1(\tau_n)}\right)^{\frac12}=O(e^{\kappa \tau_n/2}).
\end{multline}
Then, again by Lasry-Lions' argument, we obtain from \eqref{upfcasL1}, \eqref{upfcasL12}, and \eqref{upfpenulq1} that
\begin{multline} \int_{\tau_n}^{\ln(T)} e^{-\kappa \tau} \left( \int_\R ( |w_\eta(\tau)-(w_1)_\eta|^2(\mu(\tau)+\mu_1(\tau))/2 + (\mu^\theta(\tau)-\mu_1(\tau)^\theta)(\mu(\tau)-\mu_1(\tau))\right)d\tau \\ 
\leq \left| \int_{\R} (w(\tau_n)-  w_1(\tau_n))(\mu(\tau_n)-\mu_1(\tau_n))e^{-\kappa \tau_n} \right|\leq O(e^{\kappa \tau_n/2}) \int_{\R} |\mu-\mu_1| e^{-\kappa \tau_n}= O(e^{\kappa \tau_n/2}). \end{multline}
Letting $n \to \infty$, it follows that $\mu=\mu_1$ and $w_{\eta}=(w_1)_{\eta}$. So $m$ is unique, and $u_x$ is unique in $\{m>0\}$.
\end{proof}

\begin{rem}\label{two-Dirac}
 In the above analysis, we have characterized the optimal transport (with congestion cost of power type) of a Dirac mass towards a final target $m_T$, supposed to be a bump-like function satisfying \rife{hypmT}. Of course, we could use a similar approximation as in Section \ref{sec.exists} in order to build a trajectory transporting in time $T$ a Dirac mass $\de_0$ onto another Dirac mass $\de_{x_1}$. Namely, by  taking  $m^{(\vep)}_T(x):=   \vep^{-\alpha} \phi(\vep^{-\alpha}(x-x_1))$ in \rife{eq.planning}, the same analysis as in Section \ref{sec.exists} would lead to a (unique) solution with both initial and final measures given by Dirac masses.

\end{rem}

\subsection*{Acknowledgments}
The third author was partially supported by the Excellence  Project  MatMod@TOV of the Department of Mathematics of the University of Rome Tor Vergata and by Italian (EU Next Gen) PRIN project 2022W58BJ5 ({\it PDEs and optimal control methods in mean field games, population dynamics and multi-agent models}, CUP E53D23005910006).

\end{document}